\def\BibTeX{{\rm B\kern-.05em{\sc i\kern-.025em b}\kern-.08em
	T\kern-.1667em\lower.7ex\hbox{E}\kern-.125emX}}
\def\fnum@figure{\textcolor{subsectioncolor}{\sf Fig.~\thefigure}}
\def\fnum@table{\textcolor{subsectioncolor}{\sf TABLE~\thetable}}
\newtheorem{theorem}{Theorem}
\newtheorem{lemma}{Lemma}
\newtheorem{remark}{Remark}
\newcommand{\remarkEnd}{\hfill$\triangleleft$}
\newcommand{\proofEnd}{\hfill$\QED$}
\newcommand{\col}{\operatorname{col}}
\newcommand{\rank}{\operatorname{rank}}
\newcommand{\diag}{\operatorname{diag}}
\renewcommand{\d}{\mathrm{d}}
\newcommand{\e}{\mathrm{e}}
\definecolor{matlabBlue}{rgb}{0.00000,0.44700,0.74100}%
\definecolor{matlabOrange}{rgb}{0.92900,0.69400,0.12500}%
\definecolor{matlabGreen}{rgb}{0.4660,0.6740,0.1880}%
\definecolor{myRed}{rgb}{0.80000,0.15000,0.05000}
\definecolor{myNiceRed}{rgb}{0.85000,0.32500,0.09800}%
\definecolor{myYellow}{rgb}{0.92900,0.69400,0.12500}%
\definecolor{myMagenta}{rgb}{0.49400,0.18400,0.55600}%
\definecolor{myGreen}{rgb}{0.46600,0.67400,0.18800}%
\definecolor{myDarkGreen}{rgb}{0.2330, 0.420, 0.0940}%
\definecolor{viridis1of4}{rgb}{0.25497,0.26253,0.52898}%
\definecolor{viridis2of4}{rgb}{0.16393,0.47039,0.55815}%
\definecolor{viridis3of4}{rgb}{0.13523,0.65936,0.51723}%
\definecolor{viridis4of4}{rgb}{0.48323,0.82294,0.31468}%
\definecolor{colA0}{rgb}{0.8500,0.3250,0.0980}%
\definecolor{colA1}{rgb}{0.0000,0.4470,0.7410}%
\definecolor{colA2}{rgb}{0.9290,0.6940,0.1250}%
\definecolor{colA3}{rgb}{0.4660,0.6740,0.1880}%
\definecolor{colA4}{rgb}{0.4940,0.1840,0.5560}%
\definecolor{ghostColor}{rgb}{0.8,0.8,0.8}%
\definecolor{outerFrameColor}{rgb}{0.3,0.3,0.3}%
\definecolor{someDarkGreen}{rgb}{0.1660,0.540,0.1380}%
\definecolor{changeColor}{rgb}{0.02,0.200,0.900}
\definecolor{leaderColor}{rgb}{0.80000,0.15000,0.05000}%
\definecolor{followerColor}{rgb}{0.49400,0.22,0.55600}%
\definecolor{followerColor1}{rgb}{0.7,0.22,0.556}%
\definecolor{followerColor2}{rgb}{0.5,0.22,0.7}%
\definecolor{followerColor3}{rgb}{0.3,0.75,0.5}%
\definecolor{followerColor4}{rgb}{0.5,0.75,0.2}%
\definecolor{communicationColor}{rgb}{0.00000,0.44700,0.74100}%
\newcommand{\Otimes}{\!\otimes\!}
\renewcommand{\emph}[1]{\textit{#1}}
\newcommand{\mybar}[3]{%
	\mathrlap{\hspace{#2}\overline{\scalebox{#1}[1]{\phantom{\ensuremath{#3}}}}}\ensuremath{#3}
}
\newcommand{\mytilde}[3]{%
	\mathrlap{\hspace{#2}\tilde{\scalebox{#1}[0.85]{\phantom{\ensuremath{#3}}}}}\ensuremath{#3}
}
\newcommand{\pfeilTextAbstand}{-0.75mm}
\newcommand{\blockTextSize}{\small}
\newcommand{\myTikzSet}{
	\tikzset{
		frame/.style={ 
			rectangle,
			minimum height=8mm,
			minimum width=10mm,
			thick,
			inner xsep=3pt,
			inner ysep=1pt,
			draw,
		},
		>=latex,
	};}
\newcommand{\frameColor}{black!25}
\newcommand{\stableColor}{someDarkGreen}
\newcommand{\subText}[1]{\\[-0mm]{\footnotesize#1}}
\pgfplotsset{plotOpts/.style=
	{	width=0.92\linewidth,
		height=0.275\linewidth,
		at={(0\linewidth,0\linewidth)},
		scale only axis,
		every axis title/.append style={at={(current axis.north)}, yshift=-2ex, font=\footnotesize},
		every axis x label/.append style={font=\footnotesize, yshift=3mm},
		every axis y label/.append style={rotate=-90, font=\footnotesize},
		every tick label/.append style={font=\footnotesize},
		every x tick label/.append style={yshift=0.5mm},
		every y tick label/.append style={xshift=0.75mm},,
		every axis plot/.append style={line width=1pt,opacity=1},
		major tick length = 2pt,
		baseline,
		trim axis left, trim axis right
}}%
\tikzset{
	ropeStyle1/.pic={
		\node at (-0.4, 3.3) {#1};
		\draw [line width = 1] plot [smooth] coordinates {(0.5, 3.8) (0.4, 1.8) (0, 0)};
		\node [circle, scale=0.425, fill] at (0, 0) {};
		\coordinate (-left) at (-2mm, 0);
		\coordinate (-right) at (2mm, 0);
		\coordinate (-below) at (0, -0.25);
	},
	ropeStyle2/.pic={
		\node at (-0.4, 3.3) {#1};
		\draw [line width = 1] plot [smooth] coordinates {(0.5, 3) (0.4, 1.3) (0, 0)};
		\node [circle, scale=0.5, fill] at (0, 0) {};
		\coordinate (-left) at (-2mm, 0);
		\coordinate (-right) at (2mm, 0);
		\coordinate (-below) at (0, -0.25);
		\coordinate (-top-left) at (-1.5, 4.2);
		\coordinate (-top-right) at (1, 4.2);
	},
	ropeStyle3/.pic={
		\node at (-0.0, 3) {#1};
		\draw [line width = 1] plot [smooth] coordinates {(1, 5) (0.7, 2.3) (0, 0)};
		\node [circle, scale=0.65, fill] at (0, 0) {};
		\coordinate (-left) at (-2mm, 0);
		\coordinate (-right) at (2mm, 0);
		\coordinate (-below) at (0, -0.25);
		\coordinate (-top-left) at (-1.25, 5.5);
		\coordinate (-top-right) at (1.5, 5.5);
	},
	ropeStyle4/.pic={
		\node at (-0.0, 3) {#1};
		\draw [line width = 1] plot [smooth] coordinates {(1, 6) (0.7, 2.9) (0, 0)};
		\node [circle, scale=0.75, fill] at (0, 0) {};
		\coordinate (-left) at (-2mm, 0);
		\coordinate (-right) at (2mm, 0);
		\coordinate (-below) at (0, -0.25);
	},
	antennaStyle/.pic={
		\node at (0mm, 6mm) {#1};
		\draw [line width = 1] (-1mm, 0mm) -- (0mm, 0mm) -- (0mm, 4mm) -- (0mm, 0mm) -- (1mm, 0mm);
		\node [circle, scale=0.4, fill] at (0mm, 4mm) {};
		\coordinate (-left) at (-1mm, 4mm);
		\coordinate (-communicate-left) at (-2.5mm, 4mm);
		\coordinate (-communicate-right) at (2.5mm, 4mm);
	},
	communicationRight/.pic={
		\draw (-1mm, -1mm) -- (-1mm, 1mm);
		\draw (0mm, -1.5mm) -- (0mm, 1.5mm);
		\draw (1mm, -2mm) -- (1mm, 2mm);
	},
	communicationLeft/.pic={
		\draw (-1mm, -2mm) -- (-1mm, 2mm);
		\draw (0mm, -1.5mm) -- (0mm, 1.5mm);
		\draw (1mm, -1mm) -- (1mm, 1mm);
	},
}%
\newcommand{\copyrightnotice}{\begin{tikzpicture}[remember picture,overlay]
		\node[anchor=south,yshift=0pt] at (current page.south) {\parbox{\textwidth}{
			\centering\footnotesize\linespread{0}\textcopyright 2023 IEEE. Personal use of this material is permitted. Permission from IEEE must be obtained for all other uses, in any current or future media, including 	reprinting/republishing this material for advertising or promotional purposes, creating new collective works, for resale or redistribution to servers or lists, or reuse of any copyrighted
			component of this work in other works.
			\textbf{DOI: \href{https://doi.org/10.1109/TAC.2023.3272871}{10.1109/TAC.2023.3272871}}
	}};\end{tikzpicture}}%
\newcommand{\acceptancenotice}{\begin{tikzpicture}[remember picture,overlay]
		\node[anchor=north,yshift=-0pt] at (current page.north) {\parbox{\textwidth}{
			\centering\footnotesize\linespread{0} This article has been accepted for publication in IEEE Transactions on Automatic Control. This is the author's version which has not been fully edited and content may change prior to final publication. Citation information: \textbf{DOI: \href{https://doi.org/10.1109/TAC.2023.3272871}{10.1109/TAC.2023.3272871}}
		}};
	\end{tikzpicture}}%
\begin{document}
\title{Robust Cooperative Output Regulation for Networks of Hyperbolic PIDE--ODE Systems}
\author{Jakob~Gabriel and Joachim~Deutscher,~\IEEEmembership{Member,~IEEE} 
\thanks{J. Gabriel and J. Deutscher are with the Institute of Measurement, Control and Microtechnology, Ulm University,	Albert-Einstein-Allee 41, 89081 Ulm, Germany. (e-mail: \{jakob.gabriel,joachim.deutscher\}@uni-ulm.de)}}

\maketitle
%
%
%
%
\begin{abstract}
	In this paper the robust cooperative output regulation problem for multi-agent systems (MAS) with general heterodirectional hyperbolic PIDE--ODE agents is considered. 
	This setup also covers networks of ODEs with arbitrarily long input and output delays.
	The output of the agents can be defined at all boundaries, in-domain and may depend on the ODE state, while disturbances act on the agents in-domain, at the boundaries, the output and the ODE.
	The communication network is described by a constant digraph and if its Laplacian is reducible, then heterogeneous agents are permitted also in the nominal case.
	The solution is based on the cooperative internal model principle, which requires to include a diffusively driven internal model in the controller.
	The corresponding state feedback regulator design starts with a local backstepping stabilization of the coupled hyperbolic PIDE--ODE systems.
	It is shown that the remaining simultaneous stabilization of the MAS can be traced back to the simultaneous stabilization of the finite-dimensional cooperative internal model.
	Solvability conditions in terms of the network topology and the agents transfer behavior are presented.
	The new design method is applied to the formation control of a platoon of uncertain heavy ropes carrying loads to verify its applicability.
	Simulations confirm the synchronization performance achieved by the resulting networked controller.
\end{abstract}
%
%
%
\begin{IEEEkeywords}
	Distributed-parameter systems,
	hyperbolic systems,
	PIDE--ODE systems,
	multi-agent systems,
	cooperative output regulation,
	backstepping.
\end{IEEEkeywords}
%
%
%
\section{Introduction}\label{sec:intro}
\subsubsection{Background and Motivation}{\acceptancenotice\copyrightnotice}%
Networked controllers for MAS are utilized to achieve common control goals by the cooperation of multiple autonomous systems, so-called agents, although their exchange of information is restricted to a communication network.
In the last two decades, new methods for such networks of systems governed by linear and nonlinear ODEs have been developed (see, e.g., \cite{Lewis2013cooperative,Bullo2019lectures,Liang2019cooperative}).
Recently, many researchers recognized the need to extend the existing methods to agents with distributed-parameter dynamics, i.e., PDE agents.
Besides the theoretical appeal of this research problem, it is well-founded by applications.
Important examples are
networks of Lithium-Ion cells in battery management (see \cite{Ouyang16, Tang2017}),
synchronization of the attitude of flexible spacecrafts (see \cite{Chen2019distributed}),
networks of heaters in industrial furnaces (see \cite{Cap14}),
climate control in buildings (see \cite{Sangi2017platform, Borggaard2009control}) or
in environmental applications (see \cite{Tricaud2011optimal}).
Therefore, the networked control of PDE agents is an active but still emerging research topic.
Recent contributions are \cite{Demetriou2013, PilloniPisano2015, XiaScardovi2021Synchronization}, which investigate the synchronization problem for networks of parabolic PDE agents.
Furthermore, the synchronization for a MAS of wave equations is presented in \cite{AguilarOrlovPisano2021} and agents with beam dynamics are addressed in \cite{Chen2019distributed}.
In \cite{Demetriou2015TAC,SinghNatarajan2022} the synchronization of general infinite-dimensional agents described in Hilbert spaces are considered.
This is solved in \cite{Demetriou2015TAC} by optimizing the gains of a static state feedback, and, alternatively, by an adaptive state feedback.
Thereby, also the weights of the undirected networked communication are considered as a degree of freedom and utilized in the design.
Later, this approach was extended to the output feedback design for the synchronization of positive real systems in \cite{Demetriou2018IJACSP} as well as in \cite{Demetriou2019IJC} for second order infinite-dimensional systems, which model, for instance, flexible structures and wave equations with boundary dynamics.
In practice, the influence of the environment, which is represented by disturbances, needs to be dealt with.
Moreover, variations during manufacturing as well as wear and tear cause parameter differences.
Both these aspects were not considered intensively in the previous references for PDE agents.
So far, disturbances affecting only the actuated boundary are considered in \cite{Chen2021leader,Chen2021bipartite} for networks of wave PDEs and in \cite{Tang2021syncBeam} for agents with beam dynamics.
Moreover, in \cite{Demetriou2018} synchronization of more general infinite-dimensional agents that are subject to constant disturbances and some structured perturbations which are present at the boundaries or in-domain is achieved.
Recently, \cite{Deu22robustMas} showed that results for \textit{robust cooperative output regulation} of ODE agents (see \cite{SuCoopImp2013}) can be systematically extended to agents described by scalar parabolic PDEs.
Therein, output synchronization in the presence of parameter perturbations that do not destabilize the MAS is achieved.
It should be noted that PDE theory can also be successfully applied for the deployment of MAS with ODE agents on the basis of continuum models.
Results in this direction can be found in \cite{Frihauf2010leader,Freudenthaler2020pde,Terushkin2021network}.

As far as MAS with PDE agents are concerned, networks of hyperbolic PIDE--ODE systems are very interesting both from the sides of applications and of control theory.
For example, the cooperative load transport \cite{Irscheid2019flatness2,KnWo14} by means of heavy ropes carrying a mass directly leads to cooperative control problems for hyperbolic PIDE--ODE systems.
This system class is also of paramount importance to systematically deal with MAS consisting of ODE agents subject to arbitrarily long actuator and sensor delays (see \cite{ZhuKrstic2020delay,Kr08a} for single ODEs).
To the best knowledge of the authors, the robust cooperative output regulation problem has not been solved for networks of hyperbolic PIDE--ODE systems so far.
This also relates to the robust cooperative output regulation problem for MAS with ODE agents subject to delays since disturbances and model uncertainty were not considered in the recent contributions \cite{Huang2020, Huang2021}.

\subsubsection{Contributions}{\acceptancenotice\copyrightnotice}%
This paper addresses the robust cooperative output regulation, i.e., the leader-follower output synchronization despite disturbances and unknown parameter uncertainties, for networks of boundary controlled general heterodirectional hyperbolic PIDE--ODE agents on a one-dimensional spatial domain.
The outputs to be synchronized are defined at the boundaries, in-domain and may also depend on the ODE states.
All agents are subject to unknown parameter uncertainties and disturbances.
The latter and the dynamics of the leader are modeled by the solution of ODEs.
The communication between the agents uses a time-invariant network described by a directed graph.
If the Laplacian of the communication network is reducible, then distinct nominal dynamics for strongly connected groups of agents can be taken into account allowing to consider also a heterogeneous MAS in the nominal case.
It seems that even for ODE agents this setup has not been studied in the literature.
The regulator design is based on the so-called \textit{cooperative internal model principle}, which was introduced in \cite{SuCoopImp2013} for ODE agents and was extended in \cite{Deu22robustMas} to parabolic PDE agents.
Then, robust cooperative output regulation is achieved if the MAS of the agents augmented by the cooperative internal model is stabilized.
This gives rise to new and interesting challenges in the cooperative regulator design.
In particular, it requires to extend the backstepping method (see, e.g., \cite{Kr08} for an overview) for general hyperbolic PIDE--ODE systems to deal with the restricted communication topology.
Different from \cite{Deu22robustMas}, where only parabolic PDE systems in the SISO case are considered, the paper extents these results to MIMO PIDE--ODE systems consisting of general heterodirectional hyperbolic PIDEs coupled with ODEs at the boundary.
With this, the advantages of the backstepping approach are inherited for the networked controller design, for example, the stabilization for any magnitude of the coefficients or the finite-time stabilization (see \cite{Hu15a} for a discussion).
Since the solution of the robust cooperative output regulation problem amounts to solve a simultaneous stabilization problem, this has to be achieved for the network of hyperbolic PIDE--ODE systems.
To this end, the agents are stabilized locally by utilizing backstepping.
This generalizes the result \cite{DeuGehKern2018} to the larger class of hyperbolic PIDE--ODE systems.
In the next step the extension of the backstepping method is presented to take the limited communication topology into account.
In particular, a cooperative decoupling transformation is proposed so that the simultaneous stabilization problem can be traced back to the simultaneous stabilization of the finite-dimensional cooperative internal model.
This significantly simplifies the design because systematic results for the solution of the latter are available in the literature (see, e.g., \cite{Lewis2013cooperative}).
The computation of the state feedback controller is scalable, i.e., it is independent of the number of agents, all feedback gains can be calculated offline and, even though the agents are infinite-dimensional, only a communication of lumped signals from neighbors is required.
It is shown that the cooperative state feedback controller, which is designed based on the known nominal agent dynamics, ensures robust cooperative output regulation also in presence of unknown parameter uncertainties as long as the latter do not destabilize the networked controlled MAS.
This requires to verify the solvability of the so-called extended regulator equations for the cooperative control problem.
Since hyperbolic PIDEs coupled with ODEs are considered, this is a very challenging problem.
Recent results concerning the classical robust output regulation problem in \cite{Deu17a} are thereby not directly applicable and thus new ideas are introduced to verify robust cooperative output regulation.

Compared to the current literature on networked controlled PDEs, a larger and more general system class as well as more general disturbances, outputs and parameter uncertainties are considered.
Different to, e.g., \cite{Demetriou2015TAC,Demetriou2019IJC,SinghNatarajan2022}, a specific model of boundary actuated hyperbolic PIDE--ODE agents is considered which allows to employ the backstepping method and to state the design equations explicitly.
Moreover, the approach allows a transparent tuning of the closed-loop dynamics both for the local stabilization as well as for the simultaneous stabilization.
In addition, the corresponding results are directly applicable to solve the robust cooperative output regulation problem for MAS with uncertain ODE agents and large uncertain actuation and sensor delays.
Thereby, all delays may be distinct.
This extends the results in \cite{Huang2020}, where only one input and one output delay is considered.
Finally, it is worth noticing, that this paper also includes the classical robust output regulation problem for non-networked hyperbolic PIDE--ODE systems as a special case, which has not been solved in the literature so far.

\subsubsection{Organization}%
In the following Section \ref{sec:problem} the MAS in question is introduced and the robust cooperative output regulation problem is formulated.
Then, in Section \ref{sec:controller.design} the networked controller is designed by means of a local stabilization and a subsequent simultaneous stabilization.
Moreover, the nominal stability of the networked controlled MAS is shown.
Section~\ref{sec:robustCooperativeOutputRegulation} verifies that cooperative output regulation is achieved also for the uncertain MAS.
An example in Section~\ref{sec:example} demonstrates the applicability and usefulness of the new cooperative controller for the formation control of a platoon of four heavy ropes carrying loads with uncertain rope lengths and load masses.

\subsubsection{Notation}\label{sec:notation}
Variables with the double superscript $\hspace{0pt}^{i \gamma}$ are often written boldface without these indices, for instance $\bm{x} = x^{i \gamma}$.
The Kronecker product of two matrices $A = [a_{k l}] \in \mathbb{C}^{n_A \times m_A}, B \in \mathbb{C}^{n_B \times m_B}$ is $A \otimes B = [a_{k l} B] \in \mathbb{C}^{n_A n_B \times m_A m_B}$.
The vec operator is defined by $\text{vec}(A) = \col(A e_{1}, \ldots, A e_{m_{A}}) \in \mathbb{R}^{n_{A} m_{A}}$, i.e., the columns of $A$ are stacked to obtain a vector. With the inverse operator $\text{vec}^{-1}$ the original matrix $A = \text{vec}^{-1}(\text{vec}(A))$ is recovered.
Stacking the variables $x^{i \gamma} = \bm{x} \in \mathbb{R}^{n^{i} \times m^{i}}$, $i = 1, \ldots, g$, $\gamma = 1, \ldots, N^{i}$, column-wise is denoted as $\col(\bm{x}) = \col(x^{1 1}, x^{1 2}, \ldots, x^{g N^{g}})$.
Moreover, the block-diagonal matrix with $\bm{x}$ on its block-diagonal is $\diag(\bm{x}) = \diag( x^{1 1}, x^{1 2}, \ldots, x^{g N^{g}})$.
Derivatives are written using Lagrange's notation $f^\prime(z) = \frac{\d}{\d z}f(z)$ or Leibniz's notation $\d_z f(z) = \frac{\d}{\d z}f(z)$.
The matrices
\begin{align}\label{eq:Epm}
	E_{-}^{i} = \begin{bmatrix*} I_{n_{-}^{i}} \\ 0 \end{bmatrix*} \in \mathbb{R}^{n^{i} \times n_{-}^{i}}
	, &&
	E_{+}^{i} = \begin{bmatrix*} 0 \\ I_{n_{+}^{i}} \end{bmatrix*} \in \mathbb{R}^{n^{i} \times n_{+}^{i}}
\end{align}
with $n^{i} = n_{-}^{i} + n_{+}^{i}$ are frequently used.
For example, $x_{-}^{i} = E_{-}^{i \top} x^{i}$ and $x_{+}^{i} = E_{+}^{i \top} x^{i}$ with $x^{i} \in \mathbb{R}^{n^{i}}$.
%
%
%
%
{\acceptancenotice\copyrightnotice}%
\section{Problem Formulation}\label{sec:problem}%
\subsubsection{Communication Network}\label{sec:network}
The communication network is described by the \textit{digraph} $\mathcal{G} = \{ \mathcal{V}, \mathcal{E}, A_{\mathcal{G}} \}$ with the node set
\begin{align}\label{eq:agent.set}
	\mathcal{V} = \{\nu^{0}, \underbrace{\nu^{1 1}, \ldots, \nu^{1 N^{1}}}_{\in \mathcal{V}^{1}}, \ldots,  \underbrace{\nu^{g 1}, \ldots, \nu^{g N^{g}}}_{\in \mathcal{V}^{g}}\}
\end{align}
of $N+1$ vertices.
The first vertex $\nu^{0}$ represents the \textit{leader}, which generates a reference trajectory to be tracked by the outputs of the $N$ \textit{agents}
\begin{align}
	\nu^{i \gamma} := \text{$\gamma$-th agent in the group $\mathcal{V}^{i}$},
\end{align}
$i = 1, \ldots, g$, $\gamma = 1, \ldots, N^{i}$,
which are also referred to as \textit{followers}.
The latter are grouped in the $g$ sets
\begin{align}
	\mathcal{V}^{i} &= \{\nu^{i 1}, \ldots, \nu^{i N^{i}}\}, &&i=1, \ldots, g, &&\sum_{i=1}^{g} N^{i} = N.
\end{align}
Within each group $\mathcal{V}^{i}$, $i=1, \ldots, g$, the agents $\nu^{i \gamma}$, $\gamma = 1, \ldots, N^{i}$, are required to be identical in the nominal case.
In contrast, for agents of different subsets $\mathcal{V}^{i}$, $\mathcal{V}^{j}$, $i\neq j$, different nominal dynamics are allowed, including a distinct system order.
The network permits communication only along the edges in $\mathcal{E}$ and the transmitted information is weighted by the related element of the adjacency matrix $A_{\mathcal{G}}$.
In accordance with the grouping of the agents, it is assumed that the adjacency matrix $A_{\mathcal{G}}$ is partitioned and has the lower block-triangular form
\begin{align}\label{eq:Ag.frobenius}
	A_{\mathcal{G}} &= \begin{bmatrix}
		0 & 0^{\top} & \cdots &0^{\top} \\[-0.25ex]
		A_{\mathcal{G}}^{10} &A_{\mathcal{G}}^{11} &\cdots &0\hphantom{{}^\top} \\[-0.8ex]
		\vdots &\vdots &\ddots & \vdots~\,\, \\[-0.5ex]
		A_{\mathcal{G}}^{g 0} &A_{\mathcal{G}}^{g 1} & \cdots &A_{\mathcal{G}}^{g g}
	\end{bmatrix}
	\in \mathbb{R}^{(N+1) \times (N+1)}
\end{align}
with the blocks
$A_{\mathcal{G}}^{i j} = [a^{i j}_{\gamma \delta}] \in \mathbb{R}^{N^{i} \times N^{j}}$ and
$A_{\mathcal{G}}^{i 0} = [a_{\gamma}^{i 0}] \in \mathbb{R}^{N^{i}}$.
Therein, $a^{i j}_{\gamma \delta} \geq 0$ is the weight for the information received by the agent $\nu^{i \gamma}$ from the neighboring agent $\nu^{j \delta}$.
Self-loops are excluded, i.e., $a^{i i}_{\gamma \gamma} = 0$.
Moreover, $\nu^{i \gamma}$ is an \textit{informed agent} only if $a_{\gamma}^{i 0} > 0$.
In this case, the agent $\nu^{i \gamma}$ has direct access to the reference trajectory provided by the leader $\nu^{0}$ through the network.
If $a_{\gamma}^{i 0} = 0$, then $\nu^{i \gamma}$ is an \textit{uninformed agent} as it can obtain information about the reference trajectory only indirectly by communicating with other agents $\nu^{i \gamma}$.
Note that the first row in \eqref{eq:Ag.frobenius} indicates that the leader is the only root of $\mathcal{G}$ and thus determines the synchronization behavior.

The diagonal blocks $A_{\mathcal{G}}^{i i} \in \mathbb{R}^{N^{i} \times N^{i}}$, $i = 1, \ldots, g$, in \eqref{eq:Ag.frobenius} are supposed to be irreducible, hence $A_{\mathcal{G}}$ is in a \textit{Frobenius canonical form}.
Note that a square matrix $M$ is called \textit{reducible} if there is a similarity transformation $\tilde{M} = P M P^\top$ with a permutation matrix $P$, $P^{-1} = P^\top$, which yields a lower block-triangular matrix $\tilde{M}$.
Matrices that are not reducible are \textit{irreducible} (see \cite[Ch. 2.8.2]{Lewis2013cooperative}).
The Frobenius canonical form \eqref{eq:Ag.frobenius} means no loss of generality because any adjacency matrix can be mapped into \eqref{eq:Ag.frobenius} by a similarity transformation with a permutation matrix.
Furthermore, for $g=1$ the usual form of $A_{\mathcal{G}}$ results.
This setup allows to take networks of heterogeneous agents in the nominal case into account without assuming a cycle-free graph.
Note that a cycle-free condition would lead to a cascade connection of the agents without bidirectional communication.
Hence, the absence of a cycle-free condition ensures full design flexibility for the approach without further limitations.

The \textit{Laplacian} of $\mathcal{G}$ is
\begin{align}
	L_\mathcal{G} = D_\mathcal{G} - A_\mathcal{G}
		= \begin{bmatrix}
			0 & 0^{\top} \\ l_{0} & H
		\end{bmatrix}
	\in \mathbb{R}^{(N+1) \times (N+1)}
\end{align}
with the \textit{in-degree matrix}
$D_\mathcal{G} = \diag(0, d_{\mathcal{G}}^{1 1}, d_{\mathcal{G}}^{1 2}, \ldots, d_{\mathcal{G}}^{g N^{g}})$,
$d_{\mathcal{G}}^{i \gamma} = a_{\gamma}^{i 0} + \sum_{j=1}^{g} \sum_{\delta = 1}^{N^{j}} a_{\gamma \delta}^{i j}$
and $l_{0} = - \col(A_{\mathcal{G}}^{1 0}, \ldots, A_{\mathcal{G}}^{g 0})$.
Therein, $H \in \mathbb{R}^{N \times N}$ is the \textit{leader-follower matrix}, which is also lower block-triangular with the blocks $H^{i j} \in \mathbb{R}^{N^{i} \times N^{j}}$ due to \eqref{eq:Ag.frobenius}.
A useful property of $H$ is stated in the following lemma (see \cite[Lem. 3.3]{Lewis2013cooperative}).
\begin{lemma}[Leader-Follower Matrix]\label{lem:H}
	If the leader $\nu^{0}$ is the root of $\mathcal{G}$, then $\text{Re}_{\lambda \in \sigma(H)} \lambda > 0$.
\end{lemma}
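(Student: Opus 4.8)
The plan is to combine the Gershgorin disc theorem with a separate nonsingularity argument, the latter being where the root hypothesis on $\mathcal{G}$ enters. Index the $N$ followers by a single subscript $k\in\{1,\dots,N\}$, and write the entries of $H$ as follows: the diagonal entry is $d_k := d_{\mathcal{G}}^{i\gamma}\ge 0$, the off-diagonal entries are $-a_{kl}\le 0$ (so $a_{kl}\ge 0$ is the weight of the link from follower $l$ to follower $k$), and $b_k := a_{\gamma}^{i0}\ge 0$ is the weight of the direct link from the leader to follower $k$. By the definition of the in-degree one has $d_k = b_k + \sum_{l\neq k} a_{kl}$, so that the Gershgorin radius of row $k$ is $r_k := \sum_{l\neq k} a_{kl} = d_k - b_k$.

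First I would locate the spectrum by Gershgorin: every $\lambda\in\sigma(H)$ satisfies $|\lambda-d_k|\le r_k = d_k-b_k$ for some $k$. Writing $\lambda = x + \mathrm{i}y$, this reads $(x-d_k)^2 + y^2 \le (d_k-b_k)^2$. Since $0\le d_k-b_k\le d_k$, each such disc is contained in the half-plane $\{x\ge b_k\}\subseteq\{x\ge 0\}$, so $\operatorname{Re}\lambda\ge 0$ for all eigenvalues. Moreover, setting $x=0$ forces $d_k^2 + y^2 \le (d_k-b_k)^2\le d_k^2$, which can only hold if $b_k=0$ and $y=0$, i.e. $\lambda = 0$. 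Hence the discs meet the imaginary axis only at the origin, and any eigenvalue with $\operatorname{Re}\lambda=0$ must equal $0$. It therefore remains to exclude $0$ from $\sigma(H)$.

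To show that $H$ is nonsingular I would use the root condition. Suppose $Hv=0$ with $v\in\mathbb{R}^N$, $v\neq 0$; after replacing $v$ by $-v$ if necessary, assume $M:=\max_k v_k>0$. For any $k$ attaining $v_k=M$, the $k$-th row yields $d_k M = \sum_l a_{kl} v_l \le (d_k-b_k)M$, so $b_k\le 0$, hence $b_k=0$, and equality additionally forces $v_l = M$ for every in-neighbor $l$ of $k$. Consequently the set $S:=\{k : v_k=M\}$ is nonempty, consists only of uninformed followers, and is closed under passing to in-neighbors. Since the leader is the root of $\mathcal{G}$, there is a directed path from the leader to some $k\in S$; tracing it backward keeps us inside $S$ until its first follower $p_1$, which receives directly from the leader and thus has $b_{p_1}>0$, contradicting that every node of $S$ is uninformed. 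Hence $v=0$, $H$ is nonsingular, and combining this with the previous step gives $\operatorname{Re}\lambda>0$ for all $\lambda\in\sigma(H)$. I expect the main obstacle to be precisely this last step: Gershgorin alone only delivers the weak bound $\operatorname{Re}\lambda\ge 0$, and ruling out nonzero purely imaginary eigenvalues requires both the disc-geometry observation above and the connectivity hypothesis, so the interplay of these two ingredients is the crux of the argument.
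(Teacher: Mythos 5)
Your proof is correct, but it is worth noting that the paper does not actually prove this lemma at all: it simply invokes a known result, citing Lemma~3.3 of the cooperative-control reference \cite{Lewis2013cooperative}, so your argument should be compared with that standard proof rather than with anything in the paper. Your route is a self-contained elementary one: Gershgorin places $\sigma(H)$ inside discs centered at $d_k$ with radius $d_k-b_k$, which lie in the closed right half-plane and touch the imaginary axis only at the origin (your equality analysis $d_k^2+y^2\le (d_k-b_k)^2 \Rightarrow y=0,\ b_k=0$ is sound), and then a discrete maximum principle combined with the root hypothesis rules out a nonzero real kernel vector: the set $S=\{k: v_k=M\}$ is closed under in-neighbors and contains only uninformed agents, yet any directed path from the leader into $S$ must enter it through an informed agent, a contradiction. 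The standard cited proof instead exploits the block-triangular structure of the full Laplacian, $\det(sI-L_{\mathcal{G}})=s\det(sI-H)$, together with the classical fact that the Laplacian of a digraph containing a spanning tree has a simple zero eigenvalue with all remaining eigenvalues in the open right half-plane (or, equivalently, nonsingular M-matrix theory for $H$). What your version buys is that it makes completely explicit where the root condition enters (only in excluding $0\in\sigma(H)$) and requires no external spectral results on digraph Laplacians; what the citation buys is brevity and the reuse of a result the multi-agent literature treats as standard. One cosmetic remark: your write-up conflates $H^{ij}$ blocks into a single index, which is harmless here since the Frobenius form of $A_{\mathcal{G}}$ plays no role in this lemma, exactly as in the paper, where the block-triangularity of $H$ is only used later (in Lemma~\ref{lem:simultaneousStabilization}) to pass from $\sigma(H)$ to $\sigma(H^{ii})$.
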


For more background on graph theory, see e.g., \cite{Lewis2013cooperative}.

\subsubsection{Infinite-dimensional PIDE--ODE Agents}\label{sec:agentDynamics.pide-ode}
The dynamics of the \textit{uncertain agents} $\nu^{i \gamma}$, $i = 1, \ldots, g$, $\gamma = 1, \ldots, N^{i}$, are described by the linear heterodirectional hyperbolic PIDE--ODE system
\begin{subequations}\label{eq:agent}
	\begin{align}
		\!\partial_t \bm{x}(z, t) &\!=\! \bar{\bm{\Lambda}}(z) \partial_z \bm{x}(z, t) \!+\! \mybar{0.6}{4pt}{\bm{\mathcal{A}}}[\bm{x}(t),\bm{w}(t)](z)
		&&\hspace*{-4ex}+\!\bm{G}(z)\bm{d}(t)\hspace*{-1ex}\label{eq:agent.pde}\\
		\bm{x}_{+}(0, t) &\!=\! \bar{\bm{Q}}_{0} \bm{x}_{-}(0, t) + \bar{\bm{C}}_{0} \bm{w}(t) + \bm{G}_{0} \bm{d}(t)\label{eq:agent.bc0}, && t>0\\
		\bm{x}_{-}(1, t) &\!=\! \bar{\bm{Q}}_{1} \bm{x}_{+}(1, t) + \bm{u}(t) + \bm{G}_1 \bm{d}(t), && t>0\label{eq:agent.bc1}\\
		\dot{\bm{w}}(t) &\!=\! \bar{\bm{F}}_{w} \bm{w}(t) + \bar{\bm{B}}_{w} \bm{x}_{-}(0, t) + \bm{G}_w \bm{d}(t), && t>0\label{eq:agent.ode}\\
		\bm{y}(t) &\!=\! \bar{\bm{\mathcal{C}}}_{x} [\bm{x}(t)] + \bar{\bm{C}}_{w} \bm{w}(t) + \bm{G}_y \bm{d}(t), && t\geq0\label{eq:agent.output}
	\end{align}
\end{subequations}
with the formal operator
\begin{align}\label{eq:agent.pde.operator}
	&\mybar{0.6}{4pt}{\bm{\mathcal{A}}}[\bm{x}(t), \bm{w}(t)](z) = \bar{\bm{A}}(z) \bm{x}(z, t) + \bar{\bm{A}}_{0}(z) \bm{x}_{-}(0, t)\nonumber\\
	&\qquad + \int_{0}^{z} \bar{\bm{F}}(z, \zeta) \bm{x}(\zeta, t) \d\zeta + \bar{\bm{C}}(z) \bm{w}(t)
\end{align}
(for the boldface notation see Section \ref{sec:intro}.4).
The \textit{transport PIDE} \eqref{eq:agent.pde} with the state $x^{i \gamma}(z, t) = \bm{x}(z, t) \in \mathbb{R}^{n^{i}}$ is defined on $(z, t) \in (0, 1) \times \mathbb{R}^{+}$ and the initial condition (IC) is $\bm{x}(z, 0) = \bm{x}_{0}(z)$.
The matrix $\bar{\Lambda}^{i \gamma} = \bar{\bm{\Lambda}} = \Lambda^{i} + \Delta \bm{\Lambda} = \diag(\bar{\bm{\lambda}}_{1}, \ldots, \bar{\bm{\lambda}}_{n^{i}}) \in (C^{1}[0, 1])^{n^{i} \times n^{i}}$ contains the transport velocities $\bar{\bm{\lambda}}_{j}$.
Therein, $\Delta \bm{\Lambda} = \Delta \Lambda^{i \gamma}$ means an unknown uncertainty for each agent $\nu^{i \gamma}$ and therefore $\Lambda^{i}$ is the known nominal parameter common for the group $\mathcal{V}^{i}$.
The same convention using over-line $\bar{(\cdot)}$ and $\Delta$ is also used for the other model uncertainties.
The diagonal elements of $\bar{\bm{\Lambda}}$ are sorted in descending order, i.e.,
\begin{align}\label{eq:lambda}
	\bar{\bm{\lambda}}_{1}(z) \geq\! \ldots \!\geq \bar{\bm{\lambda}}_{n_{-}^{i}}(z) > 0 > \bar{\bm{\lambda}}_{n_{-}^{i}+1}(z) \geq\! \ldots \!\geq \bar{\bm{\lambda}}_{n^{i}}(z)\!
\end{align}
holds for all $z \in [0, 1]$.
It is assumed that if some $\bar{\bm{\lambda}}_{j}, \bar{\bm{\lambda}}_{j+1}, \ldots, \bar{\bm{\lambda}}_{j+k}$ of the positive or negative transport velocities are equal, then they are equal for all $z$. Otherwise, non-identical transport velocities $\bar{\bm{\lambda}}_{j}, \bar{\bm{\lambda}}_{j+1}$ must be distinct for all $z$.
Furthermore, $\bar{\bm{A}} = A^{i} + \Delta \bm{A} \in (C^{1}[0, 1])^{n^{i} \times n^{i}}$, $\bar{\bm{A}}_{0} = A_0^{i} + \Delta \bm{A}_{0} \in (C^{1}[0, 1])^{n^{i} \times n_{-}^{i}}$ and $\bar{\bm{F}} = F^{i} + \Delta \bm{F} \in (C^{1}([0, 1]^2))^{n^{i} \times n^{i}}$ holds, where the elements of $\bar{\bm{A}}(z) = [\bar{\bm{a}}_{j l}(z)]$ satisfy $\bar{\bm{a}}_{j j}(z) = 0$.
Moreover, for all $k$ equal transport velocities $\bar{\bm{\lambda}}_{j} \equiv \ldots \equiv \bar{\bm{\lambda}}_{j+k}$, the elements of $\bar{\bm{A}}(z)$ have to additionally verify $\bar{\bm{a}}_{j (j+1)}(z) = \ldots = \bar{\bm{a}}_{j (j+k)}(z) = 0$ and $\bar{\bm{a}}_{(j+1) j}(z) = \ldots = \bar{\bm{a}}_{(j+k) j}(z) = 0$.
This means no loss of generality since this can be ensured by the transformation given in \cite[Rem. 6]{Hu15a}.
In view of \eqref{eq:lambda}, the state can be composed according to $\bm{x}(z, t) = \col(\bm{x}_{-}(z, t), \bm{x}_{+}(z, t))$ with
$\bm{x}_{-} = x_{-}^{i \gamma} = E_{-}^{i \top} x^{i \gamma}$,
$\bm{x}_{+} = x_{+}^{i \gamma} = E_{+}^{i \top} x^{i \gamma}$ (see \eqref{eq:Epm}).
The states $\bm{x}_{-}(z, t) \in \mathbb{R}^{n_{-}^{i}}$ are related to the PIDEs describing the transport in negative $z$--direction and $\bm{x}_{+}(z, t) \in \mathbb{R}^{n_{+}^{i}}$ relates to the propagation in the other direction.
All assumptions posed on the uncertain parameters $\bar{\bm{\Lambda}}$, $\bar{\bm{A}}$ and $\bar{\bm{F}}$ are also fulfilled for the nominal values $\Lambda^{i}$, $A^{i}$ and $F^{i}$.
In \eqref{eq:agent.bc0} and \eqref{eq:agent.bc1} the matrices $\bar{\bm{Q}}_{0} = Q^{i}_{0} + \Delta \bm{Q}_{0} \in \mathbb{R}^{n_{+}^{i} \times n_{-}^{i}}$ and $\bar{\bm{Q}}_{1} = Q^{i}_{1} + \Delta \bm{Q}_{1} \in \mathbb{R}^{n_{-}^{i} \times n_{+}^{i}}$ describe the boundary coupling between the PIDEs \eqref{eq:agent.pde}.
The control input is $u^{i \gamma}(t) = \bm{u}(t) \in \mathbb{R}^{n_{-}^{i}}$.
Moreover, the agents are influenced by unknown disturbances $d^{i \gamma}(t) = \bm{d}(t) \in \mathbb{R}^{q^{i \gamma}}$, which affect the PIDE subsystem through the unknown disturbance input matrices $\bm{G}(z) \in \mathbb{R}^{n^{i} \times q^{i \gamma}}$ with piecewise continuous elements, $\bm{G}_{0} \in \mathbb{R}^{n_{+}^{i} \times q^{i \gamma}}$ and $\bm{G}_1 \in \mathbb{R}^{n_{-}^{i} \times q^{i \gamma}}$.
The state of the ODE subsystem \eqref{eq:agent.ode} is $w^{i \gamma}(t) = \bm{w}(t) \in \mathbb{R}^{n_{w}^{i}}$ with the IC $\bm{w}(0) = \bm{w}_{0}$. Its dynamics are described by $\bar{\bm{F}}_{w} = F_{w}^{i} + \Delta \bm{F}_{w}$ and $\bar{\bm{B}}_{w} = B_{w}^{i} + \Delta \bm{B}_{w}$ with $(F_{w}^{i}, B_{w}^{i})$ controllable.
Furthermore, the ODEs unknown disturbance input matrix is $\bm{G}_w \in \mathbb{R}^{n_{w}^{i} \times q^{i \gamma}}$.
The ODE state acts on the PIDE subsystem according to $\bar{\bm{C}}(z) = C^{i}(z) + \Delta \bm{C}(z) \in \mathbb{R}^{n^{i} \times n_{w}^{i}}$ with piecewise continuous elements and $\bar{\bm{C}}_{0} = C^{i}_{0} + \Delta \bm{C}_{0} \in \mathbb{R}^{n_{+}^{i} \times n_{w}^{i}}$.
The output $y^{i \gamma}(t) = \bm{y}(t) \in \mathbb{R}^{\bar{n}_{-}}$ to be synchronized with $0 < \bar{n}_{-} \leq \min_{i=1, \ldots, g} n_{-}^{i}$ depends on the PIDE state and the ODE state.
In particular, the formal output operator in \eqref{eq:agent.output} is
\begin{align}\label{eq:agent.output.operator}
	\bar{\bm{\mathcal{C}}}_{x}[h] &= \bar{\bm{C}}_{x,0} h(\bar{\bm{z}}_{0}) + \bar{\bm{C}}_{x,1} h(\bar{\bm{z}}_{1}) + \int_{0}^{1} \bar{\bm{C}}_{x}(\zeta) h(\zeta) \d \zeta
\end{align}
for $h(z) \in \mathbb{C}^{n^{i}}$.
Distributed and pointwise in-domain outputs are taken into account by defining
\begin{align}
	\bar{\bm{C}}_{x}(z) = \bar{\bm{C}}_{x,d}(z) + \sum_{k = 2}^{l^{i}} \bar{\bm{C}}_{x,k} \delta(z - \bar{\bm{z}}_{k})
\end{align}
with $\bar{\bm{C}}_{x,d}(z) = C_{x,d}^{i}(z) + \Delta \bm{C}_{x,d}(z) \in \mathbb{R}^{\bar{n}_{-} \times n^{i}}$ piecewise continuous.
For pointwise evaluation at the boundaries and in-domain $\bar{\bm{C}}_{x,k} = C_{x,k}^{i} + \Delta \bm{C}_{x,k} \in\mathbb{R}^{\bar{n}_{-} \times n^{i}}$ as well as $\bar{\bm{z}}_{k} = z_{k}^{i} + \Delta \bm{z}_{k} \in [0, 1]$, $k=0, \ldots, l^{i}$, are introduced with $z_{0}^{i}=0$, $z_{1}^{i} = 1$ and $z_{k}^{i} \in (0, 1)$, $k = 2, \ldots, l^{i}$.
The ODE state is considered in \eqref{eq:agent.output} by $\bar{\bm{C}}_{w} = C_{w}^{i} + \Delta \bm{C}_{w} \in \mathbb{R}^{\bar{n}_{-} \times n_{w, i}}$.
Finally, the disturbance may act at the output $\bm{y}$ due to the unknown matrix $\bm{G}_y \in \mathbb{R}^{\bar{n}_{-} \times q^{i \gamma}}$.

\begin{remark}
	This general hyperbolic PIDE--ODE setup is motivated by systems of coupled wave equations with dynamic boundary conditions at the non-actuated ends.
	In particular, if a load is carried by multiple heavy ropes (see \cite{Irscheid2019flatness2}), a system of coupled wave equations results, which can be mapped into \eqref{eq:agent} (see, e.g., \cite{Deu21waveOde}). Thereby, disturbances can act distributed on the ropes, at the suspensions, at the payload and at the measurements, which leads to the disturbance inputs in \eqref{eq:agent}\remarkEnd
\end{remark}

\subsubsection{ODE Agents with Input and Output Delays}\label{sec:agentDynamics.delay}
The system \eqref{eq:agent} includes the important setup of uncertain finite-dimensional agents subject to arbitrarily large and uncertain input and output delays.
In particular, the agents $\nu^{i \gamma}$, $i = 1, \ldots, g$, $\gamma = 1, \ldots, N^{i}$, take the form
\begin{subequations}\label{eq:agent.delay.original}
	\begin{align}
		\dot{\bm{w}}(t) &= \bar{\bm{F}}_{w} \bm{w}(t) + \bar{\bm{B}}_{w} \bar{\bm{u}}(t) + \bm{G}_{w} \bm{d}(t)\\
		\bar{\bm{y}}(t) &= \bar{\bm{C}}_{0} \bm{w}(t) + \bm{G}_{0} \bm{d}(t) \label{eq:agent.delay.original.output.ode}
	\end{align}
	with the delayed inputs and outputs
	\begin{align}
		\bar{\bm{u}}(t) &= \col (\bm{u}_1(t - \bar{\bm{D}}_1), \ldots, \bm{u}_{n_{-}^{i}\!}(t - \bar{\bm{D}}_{n_{-}^{i}}) )\\
		\bm{y}(t) &= \col (\bar{\bm{y}}_1(t - \bar{\bm{D}}_{y,1}), \ldots, \bar{\bm{y}}_{n_{+}^{i}\!}(t - \bar{\bm{D}}_{y,n_{+}^{i}}) ) \label{eq:agent.delay.original.output}
	\end{align}
\end{subequations}
in which $\bar{\bm{D}}_{1} \leq \bar{\bm{D}}_{2} \leq \ldots \leq \bar{\bm{D}}_{n_{-}^{i}}$ are the uncertain input delays and $\bar{\bm{D}}_{y,1} \geq \bar{\bm{D}}_{y,2} \geq \ldots \geq \bar{\bm{D}}_{y,n_{+}^{i}}$ represent the uncertain output delays.
This ordering of the delays means no loss of generality, since it can always be achieved by a suitable numbering of the inputs and outputs (see \cite[Rem. 2]{DeuGab20delay}).
By describing the delays as the solution of transport PDEs, the system \eqref{eq:agent.delay.original} results in the hyperbolic PDE--ODE system
\begin{subequations}\label{eq:agent.delay}
	\begin{align}
		\partial_t \bm{x}(z, t) &= \bar{\bm{\Lambda}}(z) \partial_z \bm{x}(z, t)\label{eq:agent.delay.pde}\\
		\bm{x}_{+}(0, t) &= \bar{\bm{C}}_{0} \bm{w}(t) + \bm{G}_{0} \bm{d}(t) \label{eq:agent.delay.bc0}\\
		\bm{x}_{-}(1, t) &= \bm{u}(t)\label{eq:agent.delay.bc1}\\
		\dot{\bm{w}}(t) &= \bar{\bm{F}}_{w} \bm{w}(t) + \bar{\bm{B}}_{w} \bm{x}_{-}(0, t) + \bm{G}_{w} \bm{d}(t)\label{eq:agent.delay.ode}\\
		\bm{y}(t) &= \bm{x}_{+}(1, t)\label{eq:agent.delay.output}
	\end{align}
\end{subequations}
with
$\bar{\bm{\Lambda}} = \diag(\bar{\bm{D}}_1, \ldots, \bar{\bm{D}}_{n_{-}^{i}}, -\bar{\bm{D}}_{y,1}, \ldots, -\bar{\bm{D}}_{y,n_{+}^{i}})^{-1}$.
More precisely, the PDE--ODE--PDE cascade
\begin{center}
	\begin{tikzpicture}[every path/.style={draw, thick, line  join=bevel}, node distance = 9mm and 9mm]
		\myTikzSet
		\blockTextSize
		%
		%
		\node(uIn) at (0, 0){};
		\node(PDE_in)[frame, right=of uIn, align=center]{PDE\subText{\eqref{eq:agent.delay.pde}, \eqref{eq:agent.delay.bc1}}};
		\node(ODE)[frame, right=10mm of PDE_in, align=center]{ODE\subText{\eqref{eq:agent.delay.ode}}};
		\node(PDE_out)[frame, right=10mm of ODE, align=center]{PDE\subText{\eqref{eq:agent.delay.pde}, \eqref{eq:agent.delay.bc0}}};
		\node(PDE_in.title)[above=0mm of PDE_in, align=center]{input delay};
		\node(ODE.title)[above=0mm of ODE, align=center]{lumped agent};
		\node(PDE_out.title)[above=0mm of PDE_out, align=center]{output delay};
		\draw[->] (uIn) to node[above=\pfeilTextAbstand, align=center](arrow1){$\bm{u}(t)~~$} (PDE_in.west);
		\draw[->] (PDE_in.east) to node[above=\pfeilTextAbstand, align=center](pide2ode){$\bm{\bar{u}}(t)$} (ODE.west);
		\draw[->] (ODE.east) to node[above=\pfeilTextAbstand, align=center](ode2pide){$\bm{\bar{y}}(t)$} (PDE_out.west);
		\node(yOut) [coordinate, right=of PDE_out]{};
		\draw[->] (PDE_out) to node[above=\pfeilTextAbstand, align=center](arrow1){$\bm{y}(t)$} (yOut);
	\end{tikzpicture}
\end{center}
is obtained.

\subsubsection{Robust Cooperative Output Regulation Problem}{\acceptancenotice\copyrightnotice}
The reference input $r(t) \in \mathbb{R}^{\bar{n}_{-}}$ to be tracked by all agents is the solution of the \textit{global reference model}
\begin{subequations}\label{eq:signalModel.leader}
	\begin{align}
		\dot{v}_{r}(t) &= S_{r} v_{r}(t), && t>0, &&v_{r}(0) = v_{r,0} \in \mathbb{R}^{n_{r}}\\
		r(t) &= P_{r} v_{r}(t),	&& t \geq 0,&&
	\end{align}
\end{subequations}
which is called the \textit{leader} and is treated as agent $\nu^{0}$.
It is assumed that $(P_{r}, S_{r})$ is observable and all eigenvalues in the \textit{spectrum} $\sigma(S_{r})$ lie on the imaginary axis $\text{j} \mathbb{R}$.
This allows to model trigonometric and polynomial reference inputs, for instance, linear combinations of constants, ramps and sinusoidals.
Similarly, the disturbances $d^{i \gamma}$ to be rejected are modeled by the \textit{local disturbance models}
\begin{subequations}\label{eq:signalModel.disturbance}
	\begin{align}
		\dot{v}_{d}^{i \gamma}(t) &= S_{d}^{i \gamma} v_{d}^{i \gamma}(t), && t>0, &&v_{d}^{i \gamma}(0) = v_{d,0}^{i \gamma} \in \mathbb{R}^{n^{i \gamma}_{d}}\\
		d^{i \gamma}(t) &= P_{d}^{i \gamma} v_{d}^{i \gamma}(t),	&& t \geq 0&&
	\end{align}
\end{subequations}
for all $i=1, \ldots, g$, $\gamma=1, \ldots, N^{i}$, with $(P_{d}^{i \gamma}, S_{d}^{i \gamma})$ observable and $\sigma(S_{d}^{i \gamma}) \subset \text{j} \mathbb{R}$. Both signal models \eqref{eq:signalModel.leader} and \eqref{eq:signalModel.disturbance} are merged into the \textit{joint signal model}
\begin{subequations}\label{eq:signalModel.joint}
	\begin{align}
		\dot{v}(t) &= S v(t), && t>0, &&v(0) = v_{0} \in \mathbb{R}^{n_{v}}\\
		r(t) &= \tilde{P}_{r} v(t),	&& t \geq 0&&\\
		d^{i \gamma\!}(t) &= \tilde{P}^{i \gamma}_{d} v(t),	&& t \geq 0&&
	\end{align}
\end{subequations}
such that the same signals can be generated.
Without loss of generality it is assumed that $S \in\mathbb{R}^{n_{v} \times n_{v}}$ contains only the \textit{cyclic part} of $\diag(S_{r}, S_{d}^{1 1}, \ldots, S_{d}^{g N^{g}})$.
This means that only the largest Jordan block of every eigenvalue is considered (see \cite{Fr75, DeuKer19}).
Thus, $S$ fulfills $\sigma(S) \subset \text{j} \mathbb{R}$ and is known, while knowledge of the matrices $\tilde{P}_{r}$, $\tilde{P}_{d}^{i \gamma}$ is not required for the controller design.

For solving the \textit{robust cooperative output regulation problem}, a networked controller must be designed which respects the restricted communication topology described by $\mathcal{G}$ and ensures \textit{reference tracking}
\begin{align}\label{eq:trackingProblem}
	\lim_{t\to\infty} e_{y}^{i \gamma}(t) = \lim_{t\to\infty} (y^{i \gamma}(t) - r(t)) = 0
\end{align}
for all $i = 1, \ldots, g$, $\gamma = 1, \ldots, N^{i}$, for any ICs of the uncertain agents \eqref{eq:agent}, of the joint signal model \eqref{eq:signalModel.joint} and of the networked controller.
%
%
%
\section{Cooperative State Feedback Regulator}\label{sec:controller.design}{\acceptancenotice\copyrightnotice}
The controller design for robust cooperative output regulation is based on the so-called \textit{cooperative internal model principle}, see \cite{SuCoopImp2013} for MAS with lumped agent dynamics and \cite{Deu22robustMas} for parabolic PDE agents.
For every element of the output $y^{i \gamma}(t) \in \mathbb{R}^{\bar{n}_{-}}$ one copy of the signal model \eqref{eq:signalModel.joint}, i.e., a total of $\bar{n}_{-}$-copies, has to be incorporated in the controller of each follower.
Thus, with $S$ cyclic, $\tilde{S} = I_{\bar{n}_{-}} \Otimes S$, $\tilde{B}_{y} = I_{\bar{n}_{-}} \Otimes b_{y}$ one can choose $b_{y} \in \mathbb{R}^{n_{v}}$ such that $(S, b_{y})$ is controllable.
These internal models are driven by a diffusive coupling of the outputs of neighboring agents resulting in the \textit{cooperative internal model} for the $\gamma$-th agent $\nu^{i \gamma}$ in the subset $\mathcal{V}^{i}$
\begin{subequations}\label{eq:controller}
	\begin{align}\label{eq:controller.dynamics}
		\dot{\bar{v}}^{i \gamma}(t) &= \tilde{S} \bar{v}^{i \gamma}(t) + \tilde{B}_{y} \sum_{j=1}^{i} \sum_{\delta=1}^{N^{j}} a_{\gamma \delta}^{i j} (y^{i \gamma}(t) - y^{j \delta}(t))
		\nonumber\\*&\quad
		+ \tilde{B}_{y} a_{\gamma}^{i 0} (y^{i \gamma}(t) - r(t)), \quad t>0 
	\end{align}
	with the IC $\bar{v}^{i \gamma}(0) = \bar{v}_{0}^{i \gamma} \in \mathbb{R}^{n_{\bar{v}}}$, $n_{\bar{v}} = \bar{n}_{-} n_{v}$ and the elements $a_{\gamma \delta}^{i j}$, $a_{\gamma}^{i 0}$ of $A_{\mathcal{G}}$ (see \eqref{eq:Ag.frobenius}).
	Note that in \eqref{eq:controller.dynamics} the agent $\nu^{i \gamma}$ receives information from its neighbors $\nu^{i \delta}$ in the same group $\mathcal{V}^{i}$, as well as from neighbors $\nu^{j \delta}$ of other groups $\mathcal{V}^{j}$, $j\neq i$, ranging from $j=1, \ldots, i$ due to the triangular structure of $A_{\mathcal{G}}$ in \eqref{eq:Ag.frobenius}.
	Similar to the classical internal model principle, robust cooperative output regulation is achieved by stabilizing the closed-loop consisting of the agents augmented by the internal model, which is shown in Section \ref{sec:robustCooperativeOutputRegulation}.
	For this, however, only a communication through the network with the restricted topology described by $\mathcal{G}$ is allowed. This gives rise to the state feedback
	\begin{align}
		u^{i \gamma}(t) &= u_{l}^{i \gamma}(t) + u_{c}^{i \gamma}(t) = \mathcal{K}[\bar{v}^{i \gamma}(t), x(t), w(t)] \label{eq:controller.output}
	\end{align}
\end{subequations}
with the formal feedback operator $\mathcal{K}$, $x = \col(x^{1 1}, x^{1 2}, \ldots, x^{g N^{g}})$ and $w = \col(w^{1 1}, w^{1 2}, \ldots, w^{g N^{g}})$.
By combining the cooperative internal model \eqref{eq:controller.dynamics} with the state feedback \eqref{eq:controller.output}, the \textit{cooperative state feedback regulator} \eqref{eq:controller} results.
Therein, the state feedback \eqref{eq:controller.output} consists of the \textit{local state feedback}
\begin{subequations}\label{eq:controller.u}
	\begin{align}\label{eq:controller.u.local}
		u_{l}^{i \gamma}(t) &= K_{\bar{v}}^{i} \bar{v}^{i \gamma}(t) - \int_{0}^{1}\! K_{l,x}^{i}\!(\zeta) x^{i \gamma}(\zeta, t) \d\zeta - K_{l,1}^{i} x_{+}^{i \gamma}(1, t) \nonumber\\*&\quad- K_{l,w}^{i} w^{i \gamma}(t),
	\end{align}
	which only depends on the states of the individual agents, i.e., local information, and the \textit{cooperative state feedback}
	\begin{align}
		&u_{c}^{i \gamma}(t) \!=\! K_{\bar{v}}^{i} \!\sum_{j=1}^{i} \sum_{\delta=1}^{N^{j}} \!\big( 
			a^{i j}_{\gamma \delta} (\bar{u}^{i \gamma}(t) \!-\! \bar{u}^{j \delta}(t)) + a^{i 0}_{\gamma} \bar{u}^{i \gamma}(t)
			\big)
		\label{eq:cooperative.cooperative.v2.sum}\\
		&\hspace*{-2pt}\bar{u}_{c}^{i \gamma}(t) \!=\! K_{c,w}^{i} w^{i \gamma}(t) + \int_{0}^{1} K_{c,x}^{i}(\zeta) x^{i \gamma}(\zeta, t) \d \zeta \label{eq:cooperative.cooperative.v2.u}
	\end{align}
\end{subequations}
that uses information provided by neighboring agents, i.e., $\bar{u}^{j \delta}(t)$.
The feedback gains $K_{\bar{v}}^{i} \in \mathbb{R}^{n_{-}^{i} \times n_{\bar{v}}}$, $K_{l,x}^{i}(z) \in \mathbb{R}^{n_{-}^{i} \times n^{i}}$, $K_{c,x}^{i}(z) \in \mathbb{R}^{n_{\bar{v}} \times n^{i}}$, $K_{l,1}^{i} \in \mathbb{R}^{n_{-}^{i} \times n_{+}^{i}}$ and $K_{l,w}^{i} \in \mathbb{R}^{n_{-}^{i} \times n_{w}^{i}}$, $K_{c,w}^{i} \in \mathbb{R}^{n_{\bar{v}} \times n_{w}^{i}}$ will be specified in the following design and they determine the feedback operator $\mathcal{K}$ in \eqref{eq:controller.output}.
Note that the controller \eqref{eq:controller} complies with the communication constraints specified by $\mathcal{G}$, because the elements $a^{i j}_{\gamma \delta}$ and $a^{i 0}_{\gamma}$ of the adjacency matrix $A_{\mathcal{G}}$ in \eqref{eq:controller.dynamics} and \eqref{eq:cooperative.cooperative.v2.sum} take the network topology into account.
\begin{remark}
	Fig. \ref{pic:cooperative.control} shows the local realization of the cooperative state feedback regulator for one agent $\nu^{i \gamma}$.
	It can be seen that each follower $\nu^{i \gamma}$ has to implement \eqref{eq:controller} and \eqref{eq:controller.u} locally.
	Although the agents have a distributed state, only the lumped signals $\bar{u}_{c}^{j \delta}(t) \in \mathbb{R}^{n_{\bar{v}}}$, $y^{j \delta}(t) \in \mathbb{R}^{\bar{n}_{-}}$ from neighbors $\nu^{j \delta}$ must be transmitted through the network.
	Hence, only a limited amount of data needs to be communicated.\remarkEnd
\end{remark}
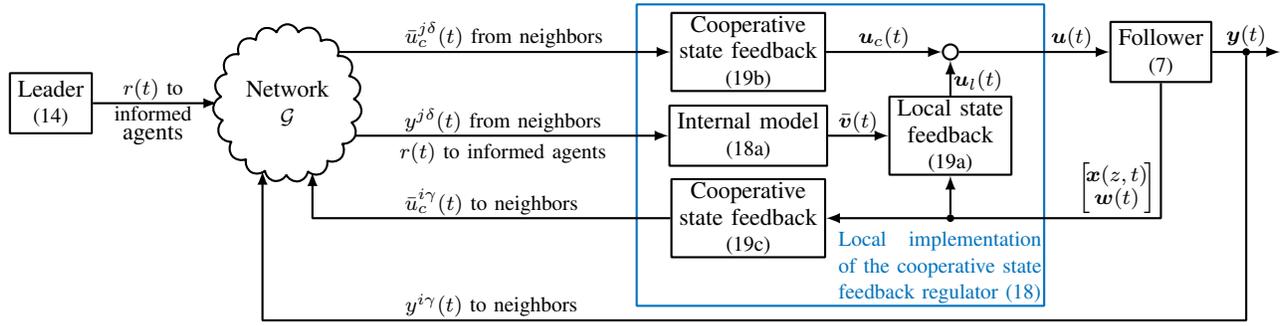
\begin{figure*}\centering
	\begin{tikzpicture}[every path/.style={draw, thick, line  join=bevel, align=center}, node distance = 4mm and 8mm]
		\myTikzSet
		\blockTextSize
		%
		%
		\node(network)[minimum width=13mm, minimum height=22mm] at (0, 0){};
		\begin{pgfonlayer}{front}
			\node[cloud, cloud puffs=19.1, cloud ignores aspect, minimum width=15mm, minimum height=21mm, align=center, fill=white, draw] (cloud) at (0, -1.25mm) {Network\subText{$\mathcal{G}$}};
		\end{pgfonlayer}
		%
		\node(IM)[frame, right=44mm of network.east south east]{Internal model\subText{\eqref{eq:controller.dynamics}}};
		\node(localFeedback)[frame, right= of IM]{Local state\\feedback\subText{\eqref{eq:controller.u.local}}};
		\draw[->] (network.east south east) to %
			node[above=\pfeilTextAbstand] {\footnotesize$y^{j \delta}(t)$ from neighbors}%
			node[below=\pfeilTextAbstand+0.5mm] {\hspace{0em}\footnotesize$r(t)$ to informed agents}%
		(IM);
		\draw[->] (IM) to node[above=\pfeilTextAbstand](v){\footnotesize$\bar{\bm{v}}(t)$} (localFeedback);
		%
		\node(sumFeedback)[circle, minimum size=2mm, inner sep = 0pt, draw=black] at (network.east north east -| localFeedback){};
		\node(coopFeedbackSum)[frame] at (IM |- network.east north east)
			{Cooperative\\ state feedback \subText{\eqref{eq:cooperative.cooperative.v2.sum}}};
		\draw[->] (network.east north east) --%
			node[above=\pfeilTextAbstand] {\footnotesize$\bar{u}_{c}^{j \delta}(t)$ from neighbors}%
			(coopFeedbackSum);
		\draw[->] (coopFeedbackSum) --
			node[above=\pfeilTextAbstand](uc){\footnotesize$\bm{u}_{c}(t)$}
			(sumFeedback);
		\draw[->] (localFeedback) to node[right=\pfeilTextAbstand, very near start](ul){\footnotesize$\bm{u}_{l}(t)$\\~} (sumFeedback);
		%
		\node(cooperativeFeedback)[frame, below=1.5mm of IM.south, anchor=north] 
		{Cooperative\\ state feedback \subText{\eqref{eq:cooperative.cooperative.v2.u}}};
		%
		\node(agent)[frame, right=2 of sumFeedback]{Follower\subText{\eqref{eq:agent}}};
		\node(knotY)[circle, minimum size=1mm, inner sep = 0pt, line width=0pt, fill=black, right=4mm of agent] {};
		\node(yOut)[coordinate,right=4mm of knotY]{};
		\draw[->] (agent) to node[above=\pfeilTextAbstand+1pt](y){\footnotesize$\bm{y}(t)$} (yOut);
		\node(knotState)[circle, minimum size=1mm, inner sep = 0pt, line width=0pt, fill=black] at (localFeedback |- cooperativeFeedback) {};
		\draw[->] (sumFeedback) to node[above=\pfeilTextAbstand, near end](u){\footnotesize$\bm{u}(t)$} (agent);
		\draw[->] (agent) |-  node[above left=\pfeilTextAbstand](states)
			{\footnotesize$\left[\!\begin{matrix}\bm{x}(z,t)\\\bm{w}(t)\end{matrix}\!\right]$}
			(cooperativeFeedback);
		\draw[->] (knotState) -- (localFeedback);
		%
		\node (sendCooperativeFeedbackHelper) [coordinate] at (network.south south east |- cooperativeFeedback){};
		\draw[->] (cooperativeFeedback) --
			node[above=\pfeilTextAbstand] (sendCooperativeFeedback) {\footnotesize$\bar{u}_{c}^{i \gamma}(t)$ to neighbors}%
			(sendCooperativeFeedbackHelper) -- (network.south south east)--++(0,2pt);
		\node(yHelper)[coordinate, below=19mm of network.south south west]{};
		\draw[->] (knotY) |- 
			node[above=\pfeilTextAbstand](yphantom){\phantom{$(y)$}}%
			(yHelper) -- (network.south south west)--++(0,2.5pt);
		\node (yLabel2) at (yphantom -| sendCooperativeFeedback) {\footnotesize$y^{i \gamma}(t)$ to neighbors};
		%
		\begin{pgfonlayer}{back}
			\node(regulatorFrame)[frame, matlabBlue, fit=(coopFeedbackSum.center)(localFeedback)(cooperativeFeedback)(IM), inner xsep=12pt, inner ysep=18pt]{};
		\end{pgfonlayer}
		\node(regulatorName)[above left=-1mm of regulatorFrame.south east, align=justify, matlabBlue, text width = 2.7cm]{\footnotesize Local implementation of the cooperative state feedback regulator \eqref{eq:controller}};
		\node(leader)[frame, left=16mm of cloud]{Leader\subText{\eqref{eq:signalModel.leader}}};
		\draw[->] (leader) -- 
			node[above=\pfeilTextAbstand](r){\footnotesize$r(t)$ to}
			node[below=\pfeilTextAbstand](r){\footnotesize informed\\[-1mm] agents}
			 (cloud.west)--++(1.5pt,0);
	\end{tikzpicture}
	\caption{\unboldmath Structure of the cooperative state feedback regulator \eqref{eq:controller} for the $\gamma$-th agent $\nu^{i \gamma}$ from the subset $\mathcal{V}^{i}$.
		Note that $y^{i \gamma}(t)$ is only used in the diffusive coupling in \eqref{eq:controller.dynamics} giving rise to a relative measurement.}
	\label{pic:cooperative.control}
\end{figure*}

The cooperative state feedback regulator is designed to stabilize the known nominal agent dynamics.
Nevertheless, reference tracking \eqref{eq:trackingProblem} is still achieved in the presence of non-destabilizing model uncertainties due to the robustness provided by the cooperative internal model principle (see Section \ref{sec:robustCooperativeOutputRegulation}).
The solution of this stabilization problem consists of two steps. Firstly, each agent is stabilized locally. Then, the remaining unstable dynamics of the internal models are stabilized simultaneously in the second step.
This is shown in the next subsections.

\subsection{Local Stabilization of the Nominal Agents}\label{sec:controller.design.localStabilization}%
For the design of the locally stabilizing part of the state feedback \eqref{eq:controller.u.local}, the known nominal agents $\nu^{i \gamma}$, $i = 1, \ldots, g$, $\gamma = 1, \ldots, N^{i}$,
\begin{center}
	\begin{tikzpicture}[every path/.style={draw, thick, line  join=bevel}, node distance = 10mm and 10mm]
		\myTikzSet
		\blockTextSize
		%
		%
		\node(uIn) at (0, 0){};
		\node(PIDE)[frame, right=of uIn, align=center]{PIDE\subText{\eqref{eq:agent.local.nominal.pde}--\eqref{eq:agent.local.nominal.bc1}}};
		\node(ODE)[frame, right=15mm of PIDE, align=center]{ODE\subText{\eqref{eq:agent.local.nominal.ode}}};
		\draw[->] (uIn) to node[above=\pfeilTextAbstand, align=center, near start](arrow1){$\bm{u}(t)$} (PIDE.west);
		\draw[->] (PIDE.east north east) to node[above=\pfeilTextAbstand, align=center](pide2ode){$\bm{x}_{-}(0, t)$} (ODE.west north west);
		\draw[->] (ODE.west south west) to node[below=\pfeilTextAbstand, align=center](ode2pide){$\bm{w}(t)$} (PIDE.east south east);
		\begin{pgfonlayer}{back}
			\node(agent)[frame, \frameColor, fit=(PIDE)(ODE)(pide2ode)(ode2pide), inner xsep=8pt, inner ysep=-1pt]{};
		\end{pgfonlayer}
		\node(yOut) [coordinate, right=of agent.east]{};
		\draw[->] (agent.east) to node[above=\pfeilTextAbstand, align=center](arrow1){$\bm{y}(t)$} (yOut);
	\end{tikzpicture}
\end{center}
described by
\begin{subequations}\label{eq:agent.local.nominal}
	\begin{align}
		\partial_t \bm{x}(z, t) &= \Lambda^{i}(z) \partial_z \bm{x}(z, t) + \mathcal{A}^{i}[\bm{x}(t), \bm{w}(t)](z) \label{eq:agent.local.nominal.pde}\\
		\bm{x}_{+}(0, t) &= Q^{i}_{0} \bm{x}_{-}(0, t) + C^{i}_{0} \bm{w}(t)\label{eq:agent.local.nominal.bc0}\\
		\bm{x}_{-}(1, t) &= Q^{i}_{1} \bm{x}_{+}(1, t) + \bm{u}(t)\label{eq:agent.local.nominal.bc1}\\
		\dot{\bm{w}}(t) &= F_{w}^{i} \bm{w}(t) + B_{w}^{i} \bm{x}_{-}(0, t)\label{eq:agent.local.nominal.ode}\\
		\bm{y}(t) &= \mathcal{C}^{i}_{x} [\bm{x}(t)] + C_{w}^{i} \bm{w}(t)\label{eq:agent.local.nominal.out}
	\end{align}
\end{subequations}
are considered, which follow from \eqref{eq:agent} without the uncertainties and the disturbance inputs. Note that the disturbances have not to be taken into account to solve this stabilization problem.

\subsubsection{Decoupling into a PIDE--ODE Cascade}\label{sec:controller.design.local.decoupling}{\acceptancenotice\copyrightnotice}%
Firstly, the \textit{decoupling transformation}
\begin{align}\label{eq:transformation.local.decoupling}
	\varepsilon^{i \gamma}(z, t) = x^{i \gamma}(z, t) - \Sigma^{i}(z) w^{i \gamma}(t)
\end{align}
is used to map \eqref{eq:agent.local.nominal} into the intermediate target system
\begin{subequations}\label{eq:agent.local.decoupled}
	\begin{align}
		\partial_t \bm{\varepsilon}(z, t) &= \Lambda^{i}(z) \partial_z \bm{\varepsilon}(z, t)  + \mathcal{A}^{i}[\bm{\varepsilon}(t), 0](z) \nonumber\\*&\quad - \Sigma^{i}(z) B_{w}^{i} \bm{\varepsilon}_{-}(0, t) \label{eq:agent.local.decoupled.pde}\\
		\bm{\varepsilon}_{+}(0, t) &= Q^{i}_{0} \bm{\varepsilon}_{-}(0, t)\label{eq:agent.local.decoupled.bc0}\\
		\bm{\varepsilon}_{-}(1, t) &= Q^{i}_{1} \bm{\varepsilon}_{+}(1, t) + \bm{u}_{1}(t)\label{eq:agent.local.decoupled.bc1}\\
		\dot{\bm{w}}(t) &= \tilde{F}_{w}^{i} \bm{w}(t) + B_{w}^{i} \bm{\varepsilon}_{-}(0, t)\label{eq:agent.local.decoupled.ode}\\
		\bm{y}(t) &= \mathcal{C}^{i}_{x} [\bm{\varepsilon}(t)] + \tilde{C}_{w}^{i} \bm{w}(t)\label{eq:agent.local.decoupled.output}
	\end{align}
\end{subequations}
with $\tilde{C}_{w}^{i} = \mathcal{C}^{i}_{x} [\Sigma^{i}] + C_{w}^{i}$.
Note that for each agent $\nu^{i \gamma}$, $\gamma = 1, \ldots, N^{i}$, in the group $\mathcal{V}^{i}$ the same decoupling matrix $\Sigma^{i}(z)$ can be used, because they coincide.
The PIDE subsystem \eqref{eq:agent.local.decoupled.pde}--\eqref{eq:agent.local.decoupled.bc1} is decoupled from the ODE \eqref{eq:agent.local.decoupled.ode}, hence a PIDE--ODE cascade
\begin{center}
	\begin{tikzpicture}[every path/.style={draw, thick, line  join=bevel}, node distance = 10mm and 10mm]
		\myTikzSet
		\blockTextSize
		%
		%
		\node(uIn) at (0, 0){};
		\node(PIDE)[frame, right=of uIn, align=center]{PIDE\subText{\eqref{eq:agent.local.decoupled.pde}--\eqref{eq:agent.local.decoupled.bc1}}};
		\node(ODE)[frame, \stableColor, right=15mm of PIDE, align=center]{ODE\subText{\eqref{eq:agent.local.decoupled.ode}}};
		\draw[->] (uIn) to node[above=\pfeilTextAbstand, align=center, near start](arrow1){$\bm{u}_{1}(t)$} (PIDE.west);
		\draw[->] (PIDE.east) to node[above=\pfeilTextAbstand, align=center](pide2ode){$\bm{\varepsilon}_{-}(0, t)$} (ODE.west);
		\begin{pgfonlayer}{back}
			\node(agent)[frame, \frameColor, fit=(PIDE)(ODE), inner xsep=8pt, inner ysep=3pt]{};
		\end{pgfonlayer}
		\node(yOut) [coordinate, right=of agent.east]{};
		\draw[->] (agent.east) to node[above=\pfeilTextAbstand, align=center](arrow1){$\bm{y}(t)$} (yOut);
	\end{tikzpicture}
\end{center}
is obtained.
To stabilize the ODE subsystem \eqref{eq:agent.local.decoupled.ode}, $K_{w}^{i}$ is chosen such that
$\tilde{F}_{w}^{i} = F_{w}^{i} - B_{w}^{i} K_{w}^{i}$
is Hurwitz.
Since $(F_{w}^{i}, B_{w}^{i})$ is controllable by assumption, this can always be achieved, for instance, by an eigenvalue assignment.

To map \eqref{eq:agent.local.nominal} into \eqref{eq:agent.local.decoupled}, the control law
\begin{align}\label{eq:feedback.local.decoupling}
	u^{i \gamma}(t) &= u^{i \gamma}_{1}(t) + ( E_{-}^{i \top} \!-\! Q^{i}_{1} E_{+}^{i \top}) \Sigma^{i}(1) w^{i \gamma}(t)
\end{align}
has to be applied and $\Sigma^{i}(z)$ in \eqref{eq:transformation.local.decoupling} has to satisfy the \textit{decoupling equations} given in the lemma below, which follows from similar calculations as in \cite{Deu21waveOde}.
However, different to \cite{Deu21waveOde}, a system of Volterra integro-differential equations has to be solved.
The next lemma asserts their solvability.
\begin{lemma}[Local Decoupling Equations]\label{lem:decouplingEquations.local}
	The \textit{decoupling equations}
	\begin{subequations}\label{eq:decouplingEquations.boundary}
		\begin{align}
			&\Lambda^{i}(z) \d_z \Sigma^{i}(z)  + A^{i}(z) \Sigma^{i}(z) - \Sigma^{i}(z) \tilde{F}_{w}^{i}\label{eq:decouplingEquations.boundary.volterraIDE}\\*
			&\quad+ \int_0^z F^{i}(z, \zeta) \Sigma^{i}(\zeta) \d \zeta = A_0^{i}(z) K_{w}^{i} - C^{i}(z), \quad z\in(0, 1]\nonumber \\
			&\Sigma^{i}(0) = - E_{-}^{i} K_{w}^{i} + E_{+}^{i} (C_{0}^{i} - Q_{0}^{i} K_{w}^{i})\label{{eq:decouplingEquations.boundary.ic}}
		\end{align}
	\end{subequations}
	have a unique continuous solution $\Sigma^{i}(z) \in \mathbb{R}^{n^{i} \times n_{w}^{i}}$, $i=1, \ldots, g$.
\end{lemma}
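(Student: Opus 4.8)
The plan is to read the decoupling equations \eqref{eq:decouplingEquations.boundary} as an initial value problem, posed in the spatial variable $z$, for a linear matrix-valued Volterra integro-differential equation, and to establish existence and uniqueness of a continuous solution by a Neumann series (equivalently, successive approximations) argument. The decisive structural fact I would exploit first is that, by the ordering \eqref{eq:lambda}, all diagonal transport velocities of $\Lambda^{i}$ are nonzero on $[0,1]$; being continuous on a compact interval they are bounded away from zero, so $\Lambda^{i}(z)$ is boundedly invertible with $\Lambda^{i}(\cdot)^{-1}$ again of class $C^{1}$. Solving \eqref{eq:decouplingEquations.boundary.volterraIDE} for the derivative then gives
\begin{equation}\label{eq:proofplan.solved}
\begin{aligned}
	\d_z \Sigma^{i}(z) = \Lambda^{i}(z)^{-1}\Big( &\Sigma^{i}(z)\tilde{F}_{w}^{i} - A^{i}(z)\Sigma^{i}(z) \\
	&- \int_0^z F^{i}(z,\zeta)\Sigma^{i}(\zeta)\,\d\zeta + A_0^{i}(z)K_{w}^{i} - C^{i}(z)\Big),
\end{aligned}
\end{equation}
an explicit differential form to which the prescribed value $\Sigma^{i}(0)$ in \eqref{eq:decouplingEquations.boundary} supplies the initial datum.

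Next I would integrate \eqref{eq:proofplan.solved} from $0$ to $z$ and insert $\Sigma^{i}(0)$ to obtain the fixed-point form $\Sigma^{i}=\mathcal{T}\Sigma^{i}$ with
\[
	(\mathcal{T}\Sigma)(z) = \Sigma^{i}(0) + \int_0^z \Lambda^{i}(s)^{-1}\Big( \Sigma(s)\tilde{F}_{w}^{i} - A^{i}(s)\Sigma(s) - \int_0^s F^{i}(s,\zeta)\Sigma(\zeta)\,\d\zeta + A_0^{i}(s)K_{w}^{i} - C^{i}(s)\Big)\,\d s ,
\]
which I regard as an affine operator $\mathcal{T}\Sigma = \mathcal{L}\Sigma + b$ on the Banach space $(C[0,1])^{n^{i}\times n_{w}^{i}}$ with the supremum norm, where $b(z)=\Sigma^{i}(0)+\int_0^z \Lambda^{i}(s)^{-1}(A_0^{i}(s)K_{w}^{i}-C^{i}(s))\,\d s$ is continuous and $\mathcal{L}$ is the bounded, causal Volterra operator collecting the pointwise and the integral contributions. (Interchanging the order of integration in the double integral by Fubini would equivalently recast $\mathcal{L}$ as a Volterra operator of the second kind with continuous kernel; either form makes the continuity of $A^{i},A_0^{i},F^{i},C^{i}$ and of $\Lambda^{i}(\cdot)^{-1}$ the only ingredients required.)

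The core estimate I would establish is the causal bound $\|(\mathcal{L}\Sigma)(z)\|\le K\int_0^z \sup_{\zeta\le s}\|\Sigma(\zeta)\|\,\d s$, where the single constant $K$ bounds $\Lambda^{i}(\cdot)^{-1},A^{i},\tilde{F}_{w}^{i},F^{i}$ on the compact domain and absorbs the once more integrated Volterra term (which on $[0,1]$ only contributes a smaller power of $z$). Iterating this inequality yields the factorial decay $\|(\mathcal{L}^{m}\Sigma)(z)\|\le (Kz)^{m}/m!\,\|\Sigma\|_{\infty}$, hence $\|\mathcal{L}^{m}\|\le K^{m}/m!\to 0$. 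Thus $\mathcal{L}$ is quasi-nilpotent, the Neumann series $\sum_{m\ge 0}\mathcal{L}^{m}$ converges in operator norm, $I-\mathcal{L}$ is boundedly invertible, and $\Sigma^{i}=(I-\mathcal{L})^{-1}b$ is the unique continuous solution; continuity of the right-hand side of \eqref{eq:proofplan.solved} then upgrades $\Sigma^{i}$ to $C^{1}$ on $(0,1]$, matching the asserted regularity.

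The hard part, and the genuinely new feature relative to the purely differential decoupling of \cite{Deu21waveOde}, is the Volterra term $\int_0^z F^{i}(z,\zeta)\Sigma^{i}(\zeta)\,\d\zeta$ inherited from the PIDE \eqref{eq:agent.local.nominal.pde}: it rules out a direct pointwise integration and is exactly what forces the fixed-point argument above. I would also emphasize that the indefinite sign pattern of $\Lambda^{i}$ in \eqref{eq:lambda} is irrelevant here, since \eqref{eq:decouplingEquations.boundary} is a spatial initial value problem started at $z=0$ for all components simultaneously; the directionality of the characteristics matters only for the temporal well-posedness of the PDE, not for the solvability of the decoupling equations. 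Finally, because identical nominal data enter for every $\gamma$ within a group, the resulting $\Sigma^{i}$ is common to all agents $\nu^{i\gamma}$, $\gamma=1,\ldots,N^{i}$, as already used below \eqref{eq:agent.local.decoupled}.
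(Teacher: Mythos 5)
Your proposal is correct and follows essentially the same route as the paper: invert $\Lambda^{i}(z)$, integrate the Volterra integro-differential equation from $z=0$ into a Volterra integral equation of the second kind, and obtain the unique continuous solution by successive approximations. The only cosmetic difference is that the paper first vectorizes $\Sigma^{i}$ via the vec operator and Kronecker products and then invokes a standard existence theorem for vector-valued Volterra equations, whereas you keep the matrix-valued form and prove the Neumann-series (factorial decay, quasi-nilpotency) bound explicitly.
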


\begin{proof}
	By premultiplying \eqref{eq:decouplingEquations.boundary.volterraIDE} with $(\Lambda^{i}(z))^{-1}$, applying the vec operator to \eqref{eq:decouplingEquations.boundary} and using $\text{vec}(A B C) = (C^{\top} \otimes A) \text{vec}(B)$ (see \cite[Prop. 7.1.9]{Bern05}), the simpler \textit{Volterra integro-differential equation}
	\begin{align}
		\d_z \sigma^{i}(z) &= \Theta_{1}^{i}(z) \sigma^{i}(z) \!+\! \int_{0}^{z} \Theta_{2}^{i}(z, \zeta) \sigma^{i}(\zeta) \d\zeta \!+\! \theta_{3}^{i}(z) \label{eq:decouplingEquations.boundary.volterraIDE2}
	\end{align}
	results with $\sigma^{i}(z) = \text{vec}(\Sigma^{i}(z))$, $\sigma^{i}(0) = \sigma^{i}_{0}$ and
	\begin{subequations}
		\begin{align}
			\Theta_{1}^{i}(z) &= \tilde{F}_{w}^{i \top} \Otimes (\Lambda^{i}(z))^{-1} \!-\! I_{n_{w}^{i}} \Otimes ((\Lambda^{i}(z))^{-1} A^{i}(z))\!\\
			\Theta_{2}^{i}(z, \zeta) &= -I_{n_{w}^{i}} \otimes ((\Lambda^{i}(z))^{-1} F^{i}(z, \zeta))\\
			\theta_{3}^{i}(z) &= \text{vec}( (\Lambda^{i}(z))^{-1} (A_0^{i}(z) K_{w}^{i} - C^{i}(z)))\\
			\sigma^{i}_{0} &= \text{vec}(- E_{-}^{i} K_{w}^{i} + E_{+}^{i} (C_{0}^{i} - Q_{0}^{i} K_{w}^{i})).
		\end{align}
	\end{subequations}
	Then, a formal integration of \eqref{eq:decouplingEquations.boundary.volterraIDE2} yields
	\begin{align}
		\sigma^{i}\!(z) \!=\!\sigma^{i}_{0} \!+\! \int_{0}^{z}\!\big(\Theta_{1}^{i}(\zeta) \sigma^{i}(\zeta) \!+\! \int_{0}^{\zeta}\! \Theta_{2}^{i}(\zeta, \bar{\zeta}) \sigma^{i}(\bar{\zeta}) \d\bar{\zeta} \!+\! \theta_{3}^{i}(\zeta) \!\big) \d\zeta\!
	\end{align}
	and after changing the order of the integrals, the \textit{Volterra integral equation of the second kind}
	\begin{align}\label{eq:decouplingEquations.boundary.volterra}
		\sigma^{i}(z) &= \theta_{4}^{i}(z) + \int_{0}^{z} \Theta_{5}^{i}(z, \zeta) \sigma^{i}(\zeta) \d\zeta
	\end{align}
	for $\sigma^{i}(z)$ follows with $\theta_{4}^{i}(z) = \sigma^{i}_{0} + \int_{0}^{z} \theta_{3}^{i}(\zeta) \d\zeta$ and the kernel $\Theta_{5}^{i}(z, \zeta) = \Theta_{1}^{i}(\zeta) + \int_{\zeta}^{z} \Theta_{2}^{i}(\bar{\zeta}, \zeta) \d\bar{\zeta}$. As $\theta_{4}^{i}$, $\Theta_{5}^{i}$ have continuous elements, \eqref{eq:decouplingEquations.boundary.volterra} and equivalently \eqref{eq:decouplingEquations.boundary} have a unique continuous solution, which can be shown by using successive approximations (see \cite[Th. 2.1.1]{Burton2005volterra}).
\end{proof}

\subsubsection{Backstepping Transformation}\label{sec:localStabilization.backstepping}
In order to stabilize the PIDE subsystem \eqref{eq:agent.local.decoupled.pde}--\eqref{eq:agent.local.decoupled.bc1}, the invertible \textit{backstepping transformation}
\begin{align}\label{eq:transformation.backstepping}
	\tilde{\varepsilon}^{i \gamma}(z, t) &= \varepsilon^{i \gamma}(z, t) - \int_{0}^{z} K^{i}(z, \zeta) \varepsilon^{i \gamma}(\zeta, t) \d\zeta = \mathcal{T}_{i}[\varepsilon^{i \gamma}(t)](z)
\end{align}
is applied to transform \eqref{eq:agent.local.decoupled} into the target system
\begin{subequations}\label{eq:agent.local.backstepping}
	\begin{align}
		\partial_t \tilde{\bm{\varepsilon}}(z, t) &= \Lambda^{i}(z) \partial_z \tilde{\bm{\varepsilon}}(z, t)  + \tilde{A}_{0}^{i}(z) \tilde{\bm{\varepsilon}}_{-}(0, t)\label{eq:agent.local.backstepping.pde}\\
		\tilde{\bm{\varepsilon}}_{+}(0, t) &= Q^{i}_{0} \tilde{\bm{\varepsilon}}_{-}(0, t)\label{eq:agent.local.backstepping.bc0}\\
		\tilde{\bm{\varepsilon}}_{-}(1, t) &= \bm{u}_{2}(t)\label{eq:agent.local.backstepping.bc1}\\
		\dot{\bm{w}}(t) &= \tilde{F}_{w}^{i} \bm{w}(t) + B_{w}^{i} \tilde{\bm{\varepsilon}}_{-}(0, t)\label{eq:agent.local.backstepping.ode}\\
		\bm{y}(t) &= \tilde{\mathcal{C}}^{i}_{x} [\tilde{\bm{\varepsilon}}(t)] + \tilde{C}_{w}^{i} \bm{w}(t)\label{eq:agent.local.backstepping.output}
	\end{align}
\end{subequations}
with $\tilde{A}_{0}^{i}(z) = \col(\tilde{A}_{0-}^{i}(z), \tilde{A}_{0+}^{i}(z))$ and a strictly lower triangular matrix $\tilde{A}_{0-}^{i}(z) \in \mathbb{R}^{n_{-}^{i} \times n_{-}^{i}}$.
Similar to the decoupling transformation \eqref{eq:transformation.local.decoupling}, the same backstepping transformation \eqref{eq:transformation.backstepping} is considered for all agents $\nu^{i \gamma}$, $\gamma = 1, \ldots, N^{i}$, in the group $\mathcal{V}^{i}$.
The PDE subsystem \eqref{eq:agent.local.backstepping.pde}--\eqref{eq:agent.local.backstepping.bc1} is a cascade of scalar transport PDEs so that it becomes finite-time stable with the settling time
$t_{f}^{i} = \sum_{k=1}^{n_{-}^{i} + 1} \int_{0}^{1} {|\lambda_{k}^{i}(\zeta)|}^{-1} \d\zeta$
in the final target system (see \cite{Hu19}). Thus, the PDE--ODE cascade
\begin{center}
	\begin{tikzpicture}[every path/.style={draw, thick, line  join=bevel}, node distance = 10mm and 10mm]
		\myTikzSet
		\blockTextSize
		%
		%
		\node(uIn) at (0, 0){};
		\node(PIDE)[frame, \stableColor, right=of uIn, align=center]{PDE\subText{\eqref{eq:agent.local.backstepping.pde}--\eqref{eq:agent.local.backstepping.bc1}}};
		\node(ODE)[frame, \stableColor, right=15mm of PIDE, align=center]{ODE\subText{\eqref{eq:agent.local.backstepping.ode}}};
		\draw[->] (uIn) to node[above=\pfeilTextAbstand, align=center, near start](arrow1){$\bm{u}_{2}(t)$} (PIDE.west);
		\draw[->] (PIDE.east) to node[above=\pfeilTextAbstand, align=center](pide2ode){$\tilde{\bm{\varepsilon}}_{-}(0, t)$} (ODE.west);
		\begin{pgfonlayer}{back}
			\node(agent)[frame, \stableColor, fit=(PIDE)(ODE), inner xsep=8pt, inner ysep=3pt]{};
		\end{pgfonlayer}
		\node(yOut) [coordinate, right=of agent.east]{};
		\draw[->] (agent.east) to node[above=\pfeilTextAbstand, align=center](arrow1){$\bm{y}(t)$} (yOut);
	\end{tikzpicture}
\end{center}
is asymptotically stable.

In order to determine the backstepping transformation \eqref{eq:transformation.backstepping}, the \textit{kernel} $K^{i}(z, \zeta)\in \mathbb{R}^{n^{i} \times n^{i}}$, $i=1, \ldots, g$, $(z, \zeta) \in \bar{\Omega} = \{(z, \zeta) \, | \, 0 \leq \zeta \leq z \leq 1\}$, must solve the \textit{kernel equations}
\begin{subequations}\label{eq:kernel.backstepping}
	\begin{align}
		&\!\Lambda^{i}(z) \partial_z K^{i}(z, \zeta) + \partial_\zeta (K^{i}(z, \zeta) \Lambda^{i}(\zeta))\label{eq:kernel.backstepping.pde}\\*&\quad= K^{i}(z, \zeta) A^{i}(\zeta) - F^{i}(z, \zeta) + \int_{\zeta}^{z} K^{i}(z, \eta) F^{i}(\eta, \zeta) \d\eta\nonumber\\
		&\!\Lambda^{i}(z) K^{i}(z, z) - K^{i}(z, z) \Lambda^{i}(z) = - A^{i}(z)\label{eq:kernel.backstepping.zEqualZeta}\\
		&\!K^{i}(z, 0) \Lambda^{i}(0) ( E_{-}^{i} \!+\! E_{+}^{i} Q_{0}^{i} ) \!=\! \tilde{A}_{0}^{i}(z) \!-\! \mathcal{T}_{i}[A_{0}^{i} \!-\! \Sigma^{i} B_{w}^{i}](z)\!\label{eq:kernel.backstepping.zetaEqual0}
	\end{align}
\end{subequations}
and the control input
\begin{align}\label{eq:feedback.local.backstepping}
	u^{i \gamma}_{1}(t) &\!=\! u^{i \gamma}_{2}(t) \!-\! Q^{i}_{1} \varepsilon_{+}^{i \gamma}(1, t) \!+\! \int_{0}^{1} \! E_{-}^{i \top} K^{i}(1, \zeta) \varepsilon^{i \gamma}(\zeta, t) \d\zeta\!
\end{align}
has to be applied (see \cite{Hu19, Deu17a}).
The non-zero elements of $\tilde{A}_{0}^{i}(z)$ follow from \eqref{eq:kernel.backstepping.zetaEqual0} and thus are determined by the kernel.
In \cite{Hu19,Hu15a} it is shown that the kernel equations without the integral term in \eqref{eq:kernel.backstepping.pde} have a unique, piecewise differentiable solution provided that the artificial boundary conditions suggested in \cite{Hu19} (see also \cite[Rem. 6]{Hu15a}) are imposed.
Thereby, it is no obstacle to take this integral in \eqref{eq:kernel.backstepping.pde} into account, which can be shown by the approach of \cite{Kr08a}.
The kernel $K^{i}_{I}(z, \zeta)\in\mathbb{R}^{n^{i}\times n^{i}}$ of the \textit{inverse backstepping transformation}
\begin{align}\label{eq:transformation.backstepping.inverse}
	\varepsilon^{i \gamma}(z, t) \!&=\! \tilde{\varepsilon}^{i \gamma}(z, t) \!+\! \int_{0\!}^{z\!} \! K^{i}_{I}(z, \zeta) \tilde{\varepsilon}^{i \gamma}(\zeta, t) \d\zeta \!=\! \mathcal{T}_{i}^{-1}[\tilde{\varepsilon}^{i \gamma}(t)](z)
\end{align}
can be calculated by using the reciprocity relation (see \cite{DeuGehKe19}).
The output \eqref{eq:agent.local.backstepping.output} in the backstepping coordinates follows by inserting \eqref{eq:transformation.backstepping.inverse} into \eqref{eq:agent.local.decoupled.output} to obtain \eqref{eq:agent.local.backstepping.output} with
\begin{subequations}
	\begin{align}
		\tilde{\mathcal{C}}^{i}_{x}[h] &\!=\! C_{x,0}^{i} h(0) + C_{x,1}^{i} h(1) + \int_{0}^{1} \tilde{C}_{x}^{i}(\zeta) h(\zeta) \d \zeta \label{eq:agent.backstepping.output.operator}\\
		\tilde{C}_{x}^{i}(z) &\!=\! C_{x}^{i}(z) \!+\! \int_{z}^{1} \! C_{x}^{i}(\zeta)  K^{i}_{I}(\zeta, z) \d\zeta \!+\! C_{x,1}^{i} K^{i}_{I}(1, z)\!
	\end{align}
\end{subequations}
after changing the order of integrals and considering \eqref{eq:agent.output.operator}.
In view of \eqref{eq:feedback.local.decoupling}, \eqref{eq:feedback.local.backstepping} and the decoupling transformation \eqref{eq:transformation.local.decoupling}, the feedback gains of the local state feedback \eqref{eq:controller.u.local} are
\begin{subequations}\label{eq:controller.gain.local}
	\begin{align}
		K_{l,x}^{i}(z) &= -E_{-}^{i \top} K^{i}(1, z),
		\quad K_{l,1}^{i} = Q_{1}^{i} \\
		K_{l,w}^{i} &= - E_{-}^{i \top} \Sigma^{i}(1) + \int_{0}^{1} \! E_{-}^{i \top} K^{i}(1, \zeta) \Sigma^{i}(\zeta) \d \zeta
		.
	\end{align}
\end{subequations}

\subsection{Simultaneous Stabilization}\label{sec:controller.simultaneousStabilization}
\subsubsection{Cooperative Decoupling of the Agents}\label{sec:controller.simultaneousStabilization.decoupling}{\acceptancenotice\copyrightnotice}
To achieve robust cooperative output regulation, the extended system consisting of the cooperative internal model \eqref{eq:controller.dynamics} and the follower agents has to be stabilized.
For this purpose, the cooperative state feedback regulators \eqref{eq:controller} and the transformed nominal agents \eqref{eq:agent.local.backstepping} are merged separately for each group $\mathcal{V}^{i}$, $i=1, \ldots, g$, (see \eqref{eq:agent.set}).
In particular,
$\bar{v}^{i} = \col(\bar{v}^{i1}, \ldots, \bar{v}^{iN^{i}})$,
$\tilde{\varepsilon}^{i} = \col(\tilde{\varepsilon}^{i 1}, \ldots, \tilde{\varepsilon}^{i N^{i}})$,
$w^{i} = \col(w^{i1}, \ldots, w^{i N^{i}})$,
$y^{i} = \col(y^{i1}, \ldots, y^{i N^{i}})$,
$u_{2}^{i} = \col(u_{2}^{i1}, \ldots, u_{2}^{i N^{i}})$
is considered.
This leads to the $g$ groups of the nominal networked controlled MAS in the target coordinates
\begin{subequations}\label{eq:group}
	\begin{align}
		\dot{\bar{v}}^{i}(t) &= (I_{N^{i}} \otimes \tilde{S}) \bar{v}^{i}(t) + \sum_{j=1}^{i} (H^{i j} \otimes \tilde{B}_{y}) y^{j}(t)\label{eq:group.regulator.dynamics}\\
		\partial_t \tilde{\varepsilon}^{i}(z, t) &= \underline{\Lambda}^{i}(z) \partial_z \tilde{\varepsilon}^{i}(z, t) + \underline{\tilde{A}}_{0}^{i}(z) \tilde{\varepsilon}^{i}_{-}(0, t) \label{eq:group.pde}	\\
		\tilde{\varepsilon}^{i}_{+}(0, t) &= \underline{Q}_{0}^{i} \tilde{\varepsilon}^{i}_{-}(0, t)\label{eq:group.bc0}\\
		\tilde{\varepsilon}^{i}_{-}(1, t) &= u_{2}^{i}(t) \label{eq:group.bc1}\\
		\dot{w}^{i}(t) &= \underline{\tilde{F}}_{w}^{i} w^{i}(t) + \underline{B}_{w}^{i} \tilde{\varepsilon}^{i}_{-}(0, t)\label{eq:group.ode}\\
		y^{i}(t) &= \tilde{\underline{\mathcal{C}}}_{x}^{i} [\tilde{\varepsilon}^{i}(t)] + \underline{\tilde{C}}_{w}^{i} w^{i}(t)\label{eq:group.output}
	\end{align}
\end{subequations}
with the aggregated nominal parameters
$\underline{\Lambda}^{i} = I_{N^{i}} \Otimes \Lambda^{i}$,
$\underline{\tilde{A}}_{0}^{i} = I_{N^{i}} \Otimes \tilde{A}_{0}^{i}$,
$\underline{Q}^{i}_{0} = I_{N^{i}} \Otimes  Q^{i}_{0}$,
$\underline{\tilde{F}}_{w}^{i} = I_{N^{i}} \Otimes \tilde{F}_{w}^{i}$,
$\underline{B}_{w}^{i} = I_{N^{i}} \Otimes B_{w}^{i}$,
$\underline{\tilde{C}}_{w}^{i} = I_{N^{i}} \Otimes \tilde{C}_{w}^{i}$,
$\tilde{\underline{\mathcal{C}}}_{x}^{i} = I_{N^{i}} \Otimes \tilde{\mathcal{C}}_{x}^{i}$
for $i=1, \ldots, g$.
The PDE--ODE--ODE systems \eqref{eq:group} can be viewed as\vspace{-3ex}
\begin{center}
	\begin{tikzpicture}[every path/.style={draw, thick, line  join=bevel}, node distance = 10mm and 10mm]
		\myTikzSet
		\blockTextSize
		%
		%
		\node(uIn) at (0, 0){};
		\node(PIDE)[frame, \stableColor, right=of uIn, align=center]{PDE\subText{\eqref{eq:group.pde}--\eqref{eq:group.bc1}}};
		\node(ODE)[frame, \stableColor, right=15mm of PIDE, align=center]{ODE\subText{\eqref{eq:group.ode}}};
		\draw[->] (uIn) to node[above=\pfeilTextAbstand, align=center, near start](arrow1){$u_{2}^{i}(t)$} (PIDE.west);
		\draw[->] (PIDE.east) to node[above=\pfeilTextAbstand, align=center](pide2ode){$\tilde{\varepsilon}_{-}^{i}(0, t)$} (ODE.west);
		\begin{pgfonlayer}{back}
			\node(agent)[frame, \stableColor, fit=(PIDE)(ODE), inner xsep=8pt, inner ysep=3pt]{};
		\end{pgfonlayer}
		%
		%
		\node(IM)[frame, right=15mm of ODE, align=center]{ODE\subText{\eqref{eq:group.regulator.dynamics}}};
		\draw[->] (agent.east) to node[above=\pfeilTextAbstand, align=center](yArrow){$y^{i}(t)$} (IM);
		\node(yIn) [coordinate, above=8mm of IM.north north west]{};
		\draw[->] (yIn) to node[right=-2mm, near start](yInArrow){{\footnotesize$\begin{array}{l}y^{1}(t)\\[-2pt]{~\tikz{\draw[-,densely dotted] (0,0)--(0,0.185);}}\\[-1mm]y^{i-1}(t)\end{array}$}} (IM.north north west);
		\node(vOut) [coordinate, right=of IM]{};
		\draw[->] (IM) to node[above=\pfeilTextAbstand, align=center](vOut){$\bar{v}^{i}(t)$} (vOut);
	\end{tikzpicture}
\end{center}
in which the cooperative internal model \eqref{eq:group.regulator.dynamics} is driven only from the outputs of the groups $\mathcal{V}^{j}$, $j = 1, \ldots, i$, because of the block triangular structure of the leader-follower matrix $H$, which results from \eqref{eq:Ag.frobenius}.
Note that the reference input $r$ in \eqref{eq:controller.dynamics} does not influence the stabilization and thus is neglected.
The \textit{cooperative decoupling transformation}
\begin{align}\label{eq:transformation.cooperative.decoupling}
	e_{\bar{v}}^{i}(t) &= \bar{v}^{i}(t)
	- \sum_{j=1}^{i} (H^{i j} \Otimes \Pi_{w}^{j}) w^{j}(t)
	\nonumber\\&\quad
	- \sum_{j=1}^{i} \int_{0}^{1} (H^{i j} \Otimes \Pi_{x}^{j}(\zeta)) \tilde{\varepsilon}^{j}(\zeta, t) \d\zeta
\end{align}
and the control input
\begin{align}\label{eq:cooperative.feedback.e}
	u^{i}_{2}(t) &= (I_{N^{i}} \otimes K_{\bar{v}}^{i}) e_{\bar{v}}^{i}(t)
\end{align}
are utilized, in order to decouple and stabilize the internal model \eqref{eq:group.regulator.dynamics}.
Thereby, only the groups $\mathcal{V}^{j}$, $j=1, \ldots, i$, have to be considered for each group $\mathcal{V}^{i}$ due to the triangular coupling with previous groups (see the block diagram above).
This leads to the ODE--PDE--ODE cascade
\begin{subequations}\label{eq:group.decoupled}
	\begin{align}
		\dot{e}_{\bar{v}}^{i}(t) &= \underline{F}_{e}^{i} e_{\bar{v}}^{i}(t) - \sum_{j=1}^{i-1} (H^{i j} \Otimes B_{e}^{j} K_{\bar{v}}^{j}) e_{\bar{v}}^{j}(t)\label{eq:group.decoupled.regulator}\\
		\partial_t \tilde{\varepsilon}^{i}(z, t) &= \underline{\Lambda}^{i}(z) \partial_z \tilde{\varepsilon}^{i}(z, t) + \underline{\tilde{A}}_{0}^{i}(z) \tilde{\varepsilon}^{i}_{-}(0, t) \label{eq:group.decoupled.pde}	\\
		\tilde{\varepsilon}^{i}_{+}(0, t) &= \underline{Q}_{0}^{i} \tilde{\varepsilon}^{i}_{-}(0, t)\label{eq:group.decoupled.bc0}\\
		\tilde{\varepsilon}^{i}_{-}(1, t) &= (I_{N^{i}} \Otimes K_{\bar{v}}^{i}) e_{\bar{v}}^{i}(t)\label{eq:group.decoupled.bc1}\\
		\dot{w}^{i}(t) &= \underline{\tilde{F}}_{w}^{i} w^{i}(t) + \underline{B}_{w}^{i} \tilde{\varepsilon}^{i}_{-}(0, t)\label{eq:group.decoupled.ode}
	\end{align}
\end{subequations}
with
$\underline{F}_{e}^{i} \!=\! I_{N^{i}} \Otimes \tilde{S} \!-\! H^{i i} \Otimes B_{e}^{i} K_{\bar{v}}^{i}$,
$B_{e}^{i} \!=\! (\Pi_{x}^{i}(1) \Lambda^{i}(1) \!-\! \tilde{B}_{y} C_{x,1}^{i}) E_{-}^{i}$.
Each group is thus represented by
\begin{center}
	\begin{tikzpicture}[every path/.style={draw, thick, line  join=bevel}, node distance = 10mm and 10mm]
		\myTikzSet
		\blockTextSize
		%
		%
		\node(uIn) at (0, 0){};
		\node(PIDE)[frame, \stableColor, right=of uIn, align=center]{PDE\subText{\eqref{eq:group.decoupled.pde}--\eqref{eq:group.decoupled.bc1}}};
		\node(ODE)[frame, \stableColor, right=15mm of PIDE, align=center]{ODE\subText{\eqref{eq:group.decoupled.ode}}};
		\draw[->] (PIDE.east) to node[above=\pfeilTextAbstand, align=center](pide2ode){$\tilde{\varepsilon}_{-}^{i}(0, t)$} (ODE.west);
		\begin{pgfonlayer}{back}
			\node(agent)[frame, \stableColor, fit=(PIDE)(ODE), inner xsep=8pt, inner ysep=3pt]{};
		\end{pgfonlayer}
		%
		%
		\node(IM)[frame, \stableColor, left=12mm of PIDE, align=center]{ODE\subText{\eqref{eq:group.decoupled.regulator}}};
		\node(yIn) [coordinate, left=12mm of IM]{};
		\draw[->] (yIn) to node[above=-0.5mm, anchor=south, near start](yInArrow){{\footnotesize$\begin{array}{l} e_{\bar{v}}^{1}(t)\\[-2pt]{~\tikz{\draw[-,densely dotted] (0,0)--(0,0.185);}}\\[-1mm]e_{\bar{v}}^{i-1}(t)\end{array}$}} (IM);
		\draw[->] (IM) to node[above=\pfeilTextAbstand, align=center, near start](ev){~~~$e_{\bar{v}}^{i}(t)$} (PIDE);
	\end{tikzpicture}
\end{center}
in which the ODEs \eqref{eq:group.decoupled.regulator} are driven by the ODE states of the previous groups. Therein, the PDE--ODE subsystem \eqref{eq:group.decoupled.pde}--\eqref{eq:group.decoupled.ode} is stable (see Section \ref{sec:controller.design.localStabilization}).
Hence, the simultaneous stabilization of the ODE subsystem \eqref{eq:group.decoupled.regulator} for $i = 1, \ldots, g$, will lead to a stable closed-loop system \eqref{eq:group.decoupled}.
\begin{remark}
	Note that the cooperative decoupling transformation \eqref{eq:transformation.cooperative.decoupling} is a crucial step in the networked controller design.
	With this, the simultaneous stabilization of the infinite-dimensional MAS is traced back to a simultaneous stabilization of finite-dimensional systems, which can be systematically solved.
	So far, the available simultaneous stabilization results for distributed-parameter systems are limited to infinite-dimensional agents with finite-dimensional unstable dynamics (see \cite{SinghNatarajan2022}), while an extension of these results seems extremely technically hard.\remarkEnd
\end{remark}

To establish \eqref{eq:group.decoupled.regulator}, differentiate \eqref{eq:transformation.cooperative.decoupling} w.r.t. time and insert \eqref{eq:group.regulator.dynamics}, \eqref{eq:group.pde}, \eqref{eq:group.ode}. Then, eliminating $\bar{v}$ by using \eqref{eq:transformation.cooperative.decoupling}, integration by parts, inserting \eqref{eq:group.output}, \eqref{eq:group.bc1}, \eqref{eq:group.bc0}, \eqref{eq:cooperative.feedback.e} as well as utilizing $\underline{E}_{+}^{i} = I_{N^{i}} \Otimes E_{+}^{i}$, $\underline{E}_{-}^{i} = I_{N^{i}} \Otimes E_{-}^{i}$ yields
\begin{align}
	&\dot{e}_{\bar{v}}^{i}(t) 
	= \underline{F}_{e}^{i} e_{\bar{v}}^{i}(t) + \sum_{j=1}^{i-1} - (H^{i j} \Otimes B_{e}^{j} K_{\bar{v}}^{j}) e_{\bar{v}}^{j}(t)
	\nonumber\\&
	+ \big(\!(H^{i j} \Otimes \tilde{S} \Pi_{w}^{j})
	+ (H^{i j} \Otimes \tilde{B}_{y})\underline{\tilde{C}}_{w}^{j}
	- (H^{i j} \Otimes \Pi_{w}^{j}) \underline{\tilde{F}}_{w}^{j}\big) w^{j}(t)
	\nonumber\\&
	+ \int_{0}^{1} \big(\!((H^{i j} \Otimes \Pi_{x}^{j}(\zeta)) \underline{\Lambda}^{j}(\zeta))' + (H^{i j} \Otimes \tilde{S} \Pi_{x}^{j}(\zeta))
	\nonumber\\&\qquad
	+ (H^{i j} \Otimes \tilde{B}_{y} \tilde{C}_{x}^{j}(\zeta))\big) \tilde{\varepsilon}^{j}(\zeta, t) \d \zeta
	\nonumber\\&
	+\! \big(\!(H^{i j} \Otimes \tilde{B}_{y} C_{x,1}^{j}) \underline{E}_{+}^{j} \!-\! (H^{i j} \Otimes \Pi_{x}^{j}(1)) \underline{\Lambda}^{j}(1) \underline{E}_{+}^{j}\big) \tilde{\varepsilon}^{j}_{+}(1, t)
	\nonumber\\&
	+ \big(\!(H^{i j} \Otimes \tilde{B}_{y} C_{x,0}^{j}) (\underline{E}_{+}^{j} \underline{Q}_{0}^{j} + \underline{E}_{-}^{j})
	- (H^{i j} \Otimes \Pi_{w}^{j}) \underline{B}_{w}^{j} 
	\nonumber\\&\qquad
	+ (H^{i j} \Otimes \Pi_{x}^{j}(0)) \underline{\Lambda}^{j}(0) (\underline{E}_{+}^{j} \underline{Q}_{0}^{j} + \underline{E}_{-}^{j})
	\nonumber\\&\qquad
	- \int_{0}^{1} (H^{i j} \Otimes \Pi_{x}^{j}(\zeta)) \underline{\tilde{A}}_{0}^{j}(\zeta) \d\zeta\big) \tilde{\varepsilon}^{j}_{-}(0, t).\!
\end{align}
After replacing the aggregated parameters by means of their Kronecker product representation and factorizing $H^{i j}\Otimes I_{n_{\bar{v}}}$, this leads to \eqref{eq:group.decoupled} if $\Pi_{x}^{i}(z)$, $\Pi_{w}^{i}$ are the solution of the \textit{decoupling equations} given in the subsequent lemma which states their solvability.
\begin{lemma}[Decoupling Equations of the MAS]\label{lem:decouplingEquations.cooperative}
The decoupling equations
\begin{subequations}\label{eq:decouplingEquations.cooperative}
	\begin{align}
		\tilde{S} \Pi_{w}^{i} - \Pi_{w}^{i} \tilde{F}_{w}^{i} &= - \tilde{B}_{y} \tilde{C}_{w}^{i}\label{eq:decouplingEquations.cooperative.sylvester}\\
		(\Pi_{x}^{i}(z) \Lambda^{i}(z))' + \tilde{S} \Pi_{x}^{i}(z) &= -\tilde{B}_{y} \tilde{C}_{x}^{i}(z), \quad z \in (0, 1) \label{eq:decouplingEquations.cooperative.ode}\\
		\Pi_{x}^{i}(0) \Lambda^{i}(0) (E_{+}^{i} Q_{0}^{i} \!+\! E_{-}^{i}) \label{eq:decouplingEquations.cooperative.bc0}
		&- \int_{0}^{1} \Pi_{x}^{i}(\zeta) \tilde{A}_{0}^{i}(\zeta) \d\zeta\\*
		&= \Pi_{w}^{i} B_{w}^{i} \!-\! \tilde{B}_{y} C_{x,0}^{i} (E_{+}^{i} Q_{0}^{i} \!+\! E_{-}^{i})\nonumber\\
		\Pi_{x}^{i}(1) \Lambda^{i}(1) E_{+}^{i} &= \tilde{B}_{y} C_{x,1}^{i} E_{+}^{i} \label{eq:decouplingEquations.cooperative.bc1}
	\end{align}
\end{subequations}
have a unique solution $\Pi_{x}^{i}(z) \in \mathbb{R}^{n_{\bar{v}} \times n^{i}}$ piecewise $C^{1}$ and $\Pi_{w}^{i} \in \mathbb{R}^{n_{\bar{v}} \times n_{w}^{i}}$, $i = 1, \ldots, g$.
\end{lemma}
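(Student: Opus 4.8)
The plan is to treat the finite-dimensional Sylvester equation \eqref{eq:decouplingEquations.cooperative.sylvester} separately from the spatial equations \eqref{eq:decouplingEquations.cooperative.ode}--\eqref{eq:decouplingEquations.cooperative.bc1}, since the former involves only $\Pi_w^i$. First I would solve \eqref{eq:decouplingEquations.cooperative.sylvester} for $\Pi_w^i$. As $\tilde{S} = I_{\bar{n}_-} \otimes S$ has its spectrum on $\text{j}\mathbb{R}$, whereas $\tilde{F}_w^i$ is Hurwitz by the local design of Section \ref{sec:controller.design.localStabilization}, the spectra $\sigma(\tilde{S})$ and $\sigma(\tilde{F}_w^i)$ are disjoint. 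Hence the Sylvester equation has a unique solution $\Pi_w^i$, which subsequently enters \eqref{eq:decouplingEquations.cooperative.bc0} as a known inhomogeneity.

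For the spatial part the key observation is that, since $\Lambda^i(z)$ is diagonal, the matrix ODE \eqref{eq:decouplingEquations.cooperative.ode} decouples columnwise. Writing $m_j(z)$ for the $j$-th column of $\Pi_x^i(z)\Lambda^i(z)$, each $m_j$ obeys the linear vector ODE $m_j'(z) = -(\lambda_j^i(z))^{-1}\tilde{S}m_j(z) - \tilde{B}_y\tilde{c}_{x,j}^i(z)$, where $\tilde{c}_{x,j}^i$ denotes the $j$-th column of $\tilde{C}_x^i$. For the indices $j > n_-^i$ belonging to $E_+^i$, the boundary condition \eqref{eq:decouplingEquations.cooperative.bc1} prescribes $m_j(1)$ directly, so I would integrate these columns backwards from $z=1$, which determines them completely. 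Because $\tilde{C}_x^i$ contains Dirac distributions at the in-domain measurement points $z_k^i$, $k=2,\ldots,l^i$, the $m_j$ jump there, which yields exactly the piecewise $C^1$ regularity claimed.

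The remaining columns $j \leq n_-^i$ carry no datum at $z=1$. I would instead parametrize them by their boundary values $\xi_j := m_j(0)$ and integrate \eqref{eq:decouplingEquations.cooperative.ode} forward, so that each $m_j(z)$, and hence $\Pi_x^i(z)E_-^i$, becomes an affine function of the unknown $\Xi := \Pi_x^i(0)\Lambda^i(0)E_-^i = [\xi_1,\ldots,\xi_{n_-^i}]$. Substituting this parametrization and the already-determined columns into \eqref{eq:decouplingEquations.cooperative.bc0}, and noting that $\Pi_x^i(0)\Lambda^i(0)E_+^i$ is now known, reduces that boundary condition to a single linear matrix equation of the form $\Xi - \int_0^1 \Pi_x^i[\Xi](\zeta)E_-^i\,\tilde{A}_{0-}^i(\zeta)\,\d\zeta = b$ with known right-hand side $b$.

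The hard part is that this is a Fredholm equation, the integral extending over all of $[0,1]$, so unique solvability is not automatic. The decisive structural fact is the strict lower triangularity of $\tilde{A}_{0-}^i(z)$ produced by the backstepping target system \eqref{eq:agent.local.backstepping}: in the $k$-th column of the integral only the columns $m_j$ with $j > k$ enter, and each $m_j$ depends solely on $\xi_j$. Thus the linear map $\Xi \mapsto \int_0^1 \Pi_x^i[\Xi]^{\mathrm{hom}}(\zeta)E_-^i\,\tilde{A}_{0-}^i(\zeta)\,\d\zeta$ is strictly triangular in the column index, so the identity minus this map is unitriangular and hence invertible. Consequently $\Xi$, and with it the columns $m_j$ for $j \leq n_-^i$, is recovered uniquely by back-substitution, beginning with column $n_-^i$ (for which the integral contribution vanishes) and descending to column $1$. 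Together with the first step this yields the unique $\Pi_x^i$ and $\Pi_w^i$.
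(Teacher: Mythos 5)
Your proof is correct, but it is organized quite differently from the paper's. The paper first splits \eqref{eq:decouplingEquations.cooperative} over the output components ($\Pi_{x,j}^{i}$, $\Pi_{w,j}^{i}$) and then performs a modal decomposition with the generalized \emph{left}-eigenvectors $\nu_{kl}^{\top}$ of $S$ (Jordan chains), reducing everything to row-vector equations $\pi_{l}^{j\top}(z)$; the spatial ODE is solved with the diagonal fundamental matrix $\Psi_{k}^{i}(z,\zeta)$ anchored at $z=1$, the unknown is $\pi_{l}^{j\top}(1)E_{-}^{i}$, and solvability comes from inverting the explicit matrix $M_{1}$, which is lower triangular with nonsingular diagonal precisely because $\tilde{A}_{0-}^{i}$ is strictly lower triangular. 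You instead bypass the Jordan structure of $S$ entirely: you solve the Sylvester equation for $\Pi_{w}^{i}$ by spectral disjointness ($\sigma(\tilde{S})\subset\mathrm{j}\mathbb{R}$ versus $\tilde{F}_{w}^{i}$ Hurwitz, which is the same fact the paper uses in the form $\mu_{k}\notin\sigma(\tilde{F}_{w}^{i})$), decouple the spatial ODE columnwise via the diagonality of $\Lambda^{i}(z)$, integrate the $E_{+}^{i}$-columns backward from the datum \eqref{eq:decouplingEquations.cooperative.bc1}, parametrize the $E_{-}^{i}$-columns by their values at $z=0$, and resolve \eqref{eq:decouplingEquations.cooperative.bc0} as a Fredholm-type equation whose homogeneous part is strictly triangular in the column index — again exactly the strict lower triangularity of $\tilde{A}_{0-}^{i}$ — so that back-substitution from column $n_{-}^{i}$ downward gives existence and uniqueness. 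Both arguments therefore hinge on the same two structural facts, but your route is more elementary and arguably cleaner as a standalone solvability proof; what the paper's modal formulation buys is the explicit quantities $\Psi_{k}^{i}$, $M_{1}$, $c_{2}^{j\top}$, $c_{3}^{j\top}$ and the relation $\pi_{l}^{j\top}(1)E_{-}^{i}=c_{3}^{j\top}M_{1}^{-1}$, which are reused verbatim in the proof of Lemma \ref{lem:controllability} (Appendix \ref{sec:appendix.controllability}) to connect $B_{e}^{i}$ with the numerator $N^{i}(s)$; your construction would not directly feed that later argument. Your regularity claim (piecewise $C^{1}$ from the Dirac terms in $\tilde{C}_{x}^{i}$) matches the paper's.
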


For the proof, see Appendix \ref{sec:appendix.decouplingEquations}.
From \eqref{eq:cooperative.feedback.e}, \eqref{eq:transformation.cooperative.decoupling} follows
\begin{align}\label{eq:controller.transformed.output}
	u_{2}^{i}(t) &= (I_{N^{i}} \otimes K_{\bar{v}}^{i}) \Big(\bar{v}^{i}(t)
	- \sum_{j=1}^{i} (H^{i j} \Otimes \Pi_{w}^{j}) w^{j}(t)
	\nonumber\\*&\quad
	- \sum_{j=1}^{i} \int_{0}^{1} (H^{i j} \Otimes \Pi_{x}^{j}(\zeta)) \tilde{\varepsilon}^{j}(\zeta, t) \d\zeta \Big).
\end{align}
This yields the cooperative state feedback \eqref{eq:cooperative.cooperative.v2.sum}, \eqref{eq:cooperative.cooperative.v2.u} after applying the backstepping transformation \eqref{eq:transformation.backstepping}, changing the order of integration and using the local decoupling transformation \eqref{eq:transformation.local.decoupling} to obtain the gains
\begin{subequations}\label{eq:controller.gain.cooperative}
	\begin{align}
		K_{c,x}^{i}(z) &= - \Pi_{x}^{i}(z) + \int_{z}^{1} \Pi_{x}^{i}(\zeta) K^{i}(\zeta, z) \d\zeta \\
		K_{c,w}^{i} &= - \Pi_{w}^{i} + \int_{0}^{1} \Pi_{x}^{i}(\zeta) \mathcal{T}_{i}[\Sigma^{i}](\zeta) \d\zeta .
	\end{align}
\end{subequations}

\subsubsection{Simultaneous Stabilization of the ODE}\label{sec:controller.simultaneousStabilization.simultaneousStabilization}{\acceptancenotice\copyrightnotice}%
In the previous subsection, the stabilization of the group-wise aggregated nominal MAS \eqref{eq:group} was traced back to the problem of designing $K_{\bar{v}}^{i}$, $i = 1, \ldots, g$, to stabilize the cascaded ODEs \eqref{eq:group.decoupled.regulator} simultaneously.
The solution of this \textit{simultaneous stabilization problem} is well studied in the literature, see for instance \cite{Lewis2013cooperative}.
A prerequisite for its solvability is the controllability of $(\tilde{S}, B_{e}^{i})$.
The following lemma presents a condition using the followers transfer behavior.
\begin{lemma}[Controllability]\label{lem:controllability}
	The numerator of the transfer matrix $F^{i}(s) = N^{i}(s) (D^{i}(s))^{-1}$, $i = 1, \ldots, g$, from $u^{i \gamma}$ to $y^{i \gamma}$ of the nominal agent \eqref{eq:agent.local.nominal} is
	\begin{align}\label{eq:agent.numerator}
		N^{i}(s) \!=\! \tilde{\mathcal{C}}_{x}^{i}[M^{i}(s)] \det(s I \!-\! \tilde{F}_{w}^{i}) \!+\! \tilde{C}_{w}^{i} \text{adj}(s I \!-\! \tilde{F}_{w}^{i}) B_{w}^{i}\!
	\end{align}
	with 
	$M^{i}(z, s) = \Phi^{i}(z, 0, s) (E^{i}_{-} + E^{i}_{+} Q_{0}^{i}) - \int_{0}^{z} \Phi^{i}(z, \zeta, s) \linebreak\cdot (\Lambda^{i}(\zeta))^{-1} \tilde{A}_{0}^{i}(\zeta)\d\zeta$
	and
	$\Phi^{i}(z, \zeta, s) = \e^{s \int_{\zeta}^{z} (\Lambda^{i}(\eta))^{-1} \d\eta }$.
	If $(S, b_{y})$ is controllable and $\text{rank}\,N^{i}(\mu) = \bar{n}_{-}$, $\forall \mu \in \sigma(S)$, holds, then $(\tilde{S}, B_{e}^{i})$ is controllable.
\end{lemma}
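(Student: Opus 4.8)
The plan is to establish the numerator formula by computing the input--output behaviour of the finite-time stabilized backstepping target system \eqref{eq:agent.local.backstepping}, and then to verify controllability of $(\tilde{S}, B_e^i)$ with the Hautus test, relating $B_e^i$ to $N^i(\mu)$ through the decoupling equations of Lemma \ref{lem:decouplingEquations.cooperative}. First I would Laplace transform \eqref{eq:agent.local.backstepping} for vanishing ICs. Then the PDE \eqref{eq:agent.local.backstepping.pde} becomes a linear ODE in $z$, whose solution by variation of constants is $\tilde{\varepsilon}(z, s) = M^i(z, s) \tilde{\varepsilon}_{-}(0, s)$ with the transition matrix $\Phi^i$ and $M^i$ as stated, where \eqref{eq:agent.local.backstepping.bc0} is used to write $\tilde{\varepsilon}(0, s) = (E_{-}^i + E_{+}^i Q_0^i) \tilde{\varepsilon}_{-}(0, s)$. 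The ODE \eqref{eq:agent.local.backstepping.ode} gives $w(s) = (sI - \tilde{F}_w^i)^{-1} B_w^i \tilde{\varepsilon}_{-}(0, s)$ and the boundary input \eqref{eq:agent.local.backstepping.bc1} yields $u_2(s) = E_{-}^{i\top} M^i(1, s) \tilde{\varepsilon}_{-}(0, s)$. Inserting these into the output \eqref{eq:agent.local.backstepping.output} and clearing the ODE denominator via $(sI - \tilde{F}_w^i)^{-1} = \frac{1}{\det(sI - \tilde{F}_w^i)}\text{adj}(sI - \tilde{F}_w^i)$ produces $F^i(s) = N^i(s) (D^i(s))^{-1}$ with $N^i(s)$ as claimed and $D^i(s) = \det(sI - \tilde{F}_w^i) E_{-}^{i\top} M^i(1, s)$. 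As the local decoupling \eqref{eq:transformation.local.decoupling} and backstepping \eqref{eq:transformation.backstepping} transformations are invertible and combined only with state feedback, the finite transmission zeros are preserved, so $N^i(s)$ is the numerator of the nominal agent's transfer matrix.

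For the controllability claim I would use the Hautus test: $(\tilde{S}, B_e^i)$ with $\tilde{S} = I_{\bar{n}_{-}} \Otimes S$ is controllable iff no nonzero left eigenvector $p$ of $\tilde{S}$ satisfies $p^\top B_e^i = 0$, which needs to be checked only for $\mu \in \sigma(\tilde{S}) = \sigma(S)$. Since $S$ is cyclic, every $\mu \in \sigma(S)$ has geometric multiplicity one with a left eigenvector $q_\mu$, so the left eigenspace of $\tilde{S}$ at $\mu$ is exactly $\{a \Otimes q_\mu : a \in \mathbb{C}^{\bar{n}_{-}}\}$; the goal is thus to show that $(a \Otimes q_\mu)^\top B_e^i = 0$ forces $a = 0$. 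Controllability of $(S, b_y)$ gives $\beta_\mu := q_\mu^\top b_y \neq 0$ by the Hautus test for $S$, and since $\tilde{F}_w^i$ is Hurwitz while $\mu \in \text{j}\mathbb{R}$, one has $\mu \notin \sigma(\tilde{F}_w^i)$ and $\det(\mu I - \tilde{F}_w^i) \neq 0$.

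The main step, and the expected obstacle, is to connect $B_e^i$ to $N^i(\mu)$. I would left-multiply the decoupling equations \eqref{eq:decouplingEquations.cooperative} by $(a \Otimes q_\mu)^\top$, using $(a \Otimes q_\mu)^\top \tilde{S} = \mu (a \Otimes q_\mu)^\top$ and $(a \Otimes q_\mu)^\top \tilde{B}_y = \beta_\mu a^\top$. Then \eqref{eq:decouplingEquations.cooperative.sylvester} gives $(a \Otimes q_\mu)^\top \Pi_w^i = -\beta_\mu a^\top \tilde{C}_w^i (\mu I - \tilde{F}_w^i)^{-1}$, and with $p^\top(z) := (a \Otimes q_\mu)^\top \Pi_x^i(z)$ equation \eqref{eq:decouplingEquations.cooperative.ode} becomes $(p^\top(z) \Lambda^i(z))' + \mu p^\top(z) = -\beta_\mu a^\top \tilde{C}_x^i(z)$. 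The crucial device is the Lagrange-type identity from differentiating $p^\top(z) \Lambda^i(z) M^i(z, \mu)$: combining this ODE with $\Lambda^i(z) \d_z M^i(z, \mu) = \mu M^i(z, \mu) - \tilde{A}_0^i(z)$ (satisfied by the $M^i$ of the first part) the $\mu$-terms cancel and $\d_z [p^\top \Lambda^i M^i] = -\beta_\mu a^\top \tilde{C}_x^i M^i - p^\top \tilde{A}_0^i$. Integrating over $[0, 1]$ and eliminating both the evaluation at $z = 0$ and the term $\int_0^1 p^\top \tilde{A}_0^i \d\zeta$ by means of the left-multiplied boundary condition \eqref{eq:decouplingEquations.cooperative.bc0}, and then reassembling the output operator \eqref{eq:agent.backstepping.output.operator}, I expect to obtain $\Gamma_a^\top M^i(1, \mu) = -\beta_\mu a^\top (\tilde{\mathcal{C}}_x^i[M^i(\cdot, \mu)] + \tilde{C}_w^i(\mu I - \tilde{F}_w^i)^{-1} B_w^i) = -\frac{\beta_\mu}{\det(\mu I - \tilde{F}_w^i)} a^\top N^i(\mu)$, where $\Gamma_a^\top = (a \Otimes q_\mu)^\top (\Pi_x^i(1) \Lambda^i(1) - \tilde{B}_y C_{x,1}^i)$. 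Tracking these boundary contributions correctly is the delicate part.

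To finish I would use the remaining boundary condition \eqref{eq:decouplingEquations.cooperative.bc1}, whose left-multiplication yields $\Gamma_a^\top E_{+}^i = 0$; since $B_e^i = (\Pi_x^i(1) \Lambda^i(1) - \tilde{B}_y C_{x,1}^i) E_{-}^i$ and $E_{-}^i E_{-}^{i\top} + E_{+}^i E_{+}^{i\top} = I$, this gives $\Gamma_a^\top = ((a \Otimes q_\mu)^\top B_e^i) E_{-}^{i\top}$. Substitution into the identity of the previous paragraph then produces $((a \Otimes q_\mu)^\top B_e^i) E_{-}^{i\top} M^i(1, \mu) = -\frac{\beta_\mu}{\det(\mu I - \tilde{F}_w^i)} a^\top N^i(\mu)$. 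Hence, if $(a \Otimes q_\mu)^\top B_e^i = 0$, the left-hand side vanishes, and because $\beta_\mu \neq 0$ and $\det(\mu I - \tilde{F}_w^i) \neq 0$ this forces $a^\top N^i(\mu) = 0$; the hypothesis $\rank N^i(\mu) = \bar{n}_{-}$ (full row rank) then gives $a = 0$. Thus no nonzero left eigenvector of $\tilde{S}$ annihilates $B_e^i$, and the Hautus test establishes controllability of $(\tilde{S}, B_e^i)$.
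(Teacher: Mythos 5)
Your proposal is correct, and on the controllability half it takes a genuinely different route from the paper. The numerator part is essentially the paper's argument: the paper also uses exponential solutions and the fundamental matrix $\Phi^{i}$, but it keeps the original input $\bm{u}$ throughout by applying \eqref{eq:transformation.local.decoupling}, \eqref{eq:transformation.backstepping} \emph{without} the stabilizing feedback, so that \eqref{eq:agent.local.backstepping.bc1} acquires extra $\Sigma^{i}(1)\check{\bm{w}}$- and $\mathcal{T}_{i}^{-1}$-terms which are then absorbed into $D^{i}(s)$; that explicit bookkeeping is precisely what substantiates the feedback-invariance of the numerator which you invoke somewhat informally, so your shortcut gives the same $N^{i}(s)$ but the paper's version is the rigorous form of it. The real divergence is in the PBH step. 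The paper evaluates $\bar{\nu}_{km}^{\top} B_{e}^{i}$ by inserting the \emph{explicit solution} of the cooperative decoupling equations constructed in Appendix \ref{sec:appendix.decouplingEquations} (the quantities $c_{2}^{j\top}$, $c_{3}^{j\top}$, $M_{1}$ and the relation $\Psi_{k}^{i}(z,\zeta)=\Lambda^{i}(\zeta)(\Lambda^{i}(z))^{-1}\Phi^{i}(\zeta,z,\mu_{k})$), reassembles $\tilde{\mathcal{C}}_{x}^{i}[M(\mu_{k})]$, and arrives at the factorization \eqref{eq:controllability.nub.3}, whose nonvanishing requires the nonsingularity of $M_{1}$ --- a separate structural argument resting on the strictly lower triangular form of $\tilde{A}_{0-}^{i}$. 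You never solve the decoupling equations: you left-multiply \eqref{eq:decouplingEquations.cooperative} by the left eigenvectors $(a \Otimes q_{\mu})^{\top}$ of $\tilde{S}$, set $p^{\top}(z)=(a \Otimes q_{\mu})^{\top}\Pi_{x}^{i}(z)$, and exploit the Lagrange-type identity $\d_{z}[p^{\top}\Lambda^{i}M^{i}]=-\beta_{\mu}a^{\top}\tilde{C}_{x}^{i}M^{i}-p^{\top}\tilde{A}_{0}^{i}$ (valid because $\Lambda^{i}\d_{z}M^{i}=\mu M^{i}-\tilde{A}_{0}^{i}$, and to be read piecewise when $\tilde{C}_{x}^{i}$ contains $\delta$-terms, exactly as in the paper's treatment), so that \eqref{eq:decouplingEquations.cooperative.bc0} and \eqref{eq:decouplingEquations.cooperative.bc1} eliminate all boundary and integral terms and yield $\bigl((a \Otimes q_{\mu})^{\top} B_{e}^{i}\bigr)E_{-}^{i\top}M^{i}(1,\mu)=-\beta_{\mu}(\det(\mu I-\tilde{F}_{w}^{i}))^{-1}a^{\top}N^{i}(\mu)$ directly from the defining equations. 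Both identities close the PBH test, but yours only needs the implication $(a \Otimes q_{\mu})^{\top} B_{e}^{i}=0\Rightarrow a^{\top}N^{i}(\mu)=0\Rightarrow a=0$, and therefore never requires invertibility of $M_{1}$ (equivalently of $E_{-}^{i\top}M^{i}(1,\mu)$); this makes the argument shorter and self-contained given only the existence statement of Lemma \ref{lem:decouplingEquations.cooperative}. What the paper's heavier computation buys is an explicit closed form for $\bar{\nu}_{km}^{\top} B_{e}^{i}$ built from the same intermediate objects as Appendix \ref{sec:appendix.decouplingEquations}, and transfer-function machinery that is reused verbatim for the aggregated numerator in Appendix \ref{sec:appendix.extendedRegulatorEquations}.
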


The proof is given in Appendix \ref{sec:appendix.controllability}.
This allows to solve the simultaneous stabilization problem for \eqref{eq:group.decoupled.regulator} by using the next lemma.
\begin{lemma}[Simultaneous Stabilization]\label{lem:simultaneousStabilization}
	If the pair $(\tilde{S}, B_{e}^{i})$, $i = 1, \ldots, g$, is controllable and the leader $\nu^{0}$ is the root of $\mathcal{G}$, then $K_{\bar{v}}^{i} = B_{e}^{i \top} P^{i}$ with the positive definite solution $P^{i} \in \mathbb{R}^{n_{\bar{v}} \times n_{\bar{v}}}$ of the \textit{algebraic Riccati equation (ARE)}
	\begin{align}\label{eq:simultaneousStabilization.are}
		\tilde{S}^{\top} P^{i} + P^{i} \tilde{S} - 2 \kappa^{i} P^{i} B_{e}^{i} B_{e}^{i \top} P^{i} + a^{i} I_{n_{\bar{v}}} = 0,
	\end{align}
	$a^{i} > 0$, $\text{Re}(\lambda_{H}^{i}) \geq \kappa^{i} > 0$, $\forall \lambda_{H}^{i} \in \sigma(H^{i i})$ ensures that $\underline{F}_{e}^{i}$ is a Hurwitz matrix.
\end{lemma}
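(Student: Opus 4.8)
The plan is to exploit the Kronecker structure of $\underline{F}_{e}^{i} = I_{N^{i}} \otimes \tilde{S} - H^{i i} \otimes B_{e}^{i} B_{e}^{i \top} P^{i}$, which results after inserting $K_{\bar{v}}^{i} = B_{e}^{i \top} P^{i}$, in order to reduce the Hurwitz property to a family of finite-dimensional closed-loop matrices indexed by the eigenvalues of $H^{i i}$, and to certify each of them by a Lyapunov inequality derived directly from the ARE \eqref{eq:simultaneousStabilization.are}. First I would settle the existence of a suitable $P^{i}$: since $(\tilde{S}, B_{e}^{i})$ is controllable by hypothesis and $a^{i} I_{n_{\bar{v}}}$ is positive definite, \eqref{eq:simultaneousStabilization.are} is a standard LQR Riccati equation (with weight $R = (2\kappa^{i})^{-1} I$), so it admits a unique symmetric positive definite stabilizing solution $P^{i} = P^{i \top} > 0$.

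Next I would triangularize the network coupling. By Lemma \ref{lem:H} and the lower block-triangular structure of $H$, whose spectrum equals $\bigcup_{i} \sigma(H^{i i})$, every $\lambda \in \sigma(H^{i i})$ satisfies $\text{Re}(\lambda) \geq \kappa^{i} > 0$. Taking a Schur decomposition $T^{-1} H^{i i} T = U$ with $U$ upper triangular and diagonal entries $\lambda_{1}, \ldots, \lambda_{N^{i}} \in \sigma(H^{i i})$, the similarity transform $(T^{-1} \otimes I_{n_{\bar{v}}}) \underline{F}_{e}^{i} (T \otimes I_{n_{\bar{v}}}) = I_{N^{i}} \otimes \tilde{S} - U \otimes B_{e}^{i} B_{e}^{i \top} P^{i}$ is again block upper triangular, with diagonal blocks $A_{c,k} = \tilde{S} - \lambda_{k} B_{e}^{i} B_{e}^{i \top} P^{i}$. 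Hence $\sigma(\underline{F}_{e}^{i}) = \bigcup_{k=1}^{N^{i}} \sigma(A_{c,k})$, and it suffices to show that each $A_{c,k}$ is Hurwitz.

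For a single eigenvalue $\lambda$ with $\text{Re}(\lambda) \geq \kappa^{i}$, set $A_{c} = \tilde{S} - \lambda B_{e}^{i} B_{e}^{i \top} P^{i}$ and evaluate the Lyapunov form associated with $P^{i}$ over $\mathbb{C}$ (writing $(\cdot)^{*}$ for the conjugate transpose). Using that $P^{i}$ and $B_{e}^{i}$ are real and substituting $\tilde{S}^{\top} P^{i} + P^{i} \tilde{S} = 2\kappa^{i} P^{i} B_{e}^{i} B_{e}^{i \top} P^{i} - a^{i} I_{n_{\bar{v}}}$ from \eqref{eq:simultaneousStabilization.are} gives
\begin{align}\label{eq:plan.lyap}
	A_{c}^{*} P^{i} + P^{i} A_{c}
	&= \tilde{S}^{\top} P^{i} + P^{i} \tilde{S} - 2\,\text{Re}(\lambda)\, P^{i} B_{e}^{i} B_{e}^{i \top} P^{i} \nonumber\\
	&= -2 (\text{Re}(\lambda) - \kappa^{i})\, P^{i} B_{e}^{i} B_{e}^{i \top} P^{i} - a^{i} I_{n_{\bar{v}}}.
\end{align}
The right-hand side is negative definite, because the first term is negative semidefinite (since $\text{Re}(\lambda) \geq \kappa^{i}$ and $P^{i} B_{e}^{i} B_{e}^{i \top} P^{i} \geq 0$) and $-a^{i} I_{n_{\bar{v}}}$ is negative definite as $a^{i} > 0$. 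As $P^{i} > 0$, testing \eqref{eq:plan.lyap} against an eigenvector of $A_{c}$ shows that every eigenvalue of $A_{c}$ has strictly negative real part, so $A_{c}$ is Hurwitz, and therefore $\underline{F}_{e}^{i}$ is Hurwitz.

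The step I expect to require the most care is \eqref{eq:plan.lyap}: because $H^{i i}$ is in general nonsymmetric, its eigenvalues $\lambda$ are complex, so the Lyapunov inequality must be carried out over $\mathbb{C}$ with the conjugate transpose, and the decisive sign of the excess-gain term $-2(\text{Re}(\lambda) - \kappa^{i}) P^{i} B_{e}^{i} B_{e}^{i \top} P^{i}$ hinges precisely on matching $\text{Re}(\lambda) \geq \kappa^{i}$ with the factor $2\kappa^{i}$ in the ARE. This is exactly the reason $\kappa^{i}$ is chosen as a lower bound on $\text{Re}(\sigma(H^{i i}))$, and it is where the network topology (via Lemma \ref{lem:H}) enters the finite-dimensional stabilization argument.
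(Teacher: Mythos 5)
Your proof is correct, and it is a faithful, self-contained expansion of what the paper handles by citation. The paper's own proof has only two steps: it invokes Lemma \ref{lem:H} together with the block-triangular structure of $H$ to conclude $\text{Re}(\lambda_{H}^{i}) > 0$ for all $\lambda_{H}^{i} \in \sigma(H^{i i})$ (which is what makes the required choice of $\kappa^{i}$ possible), and then states that the rest follows directly from \cite[Th. 3.1]{Lewis2013cooperative}. What you have written out --- existence of the stabilizing $P^{i} > 0$ from controllability of $(\tilde{S}, B_{e}^{i})$ and positive definiteness of $a^{i} I_{n_{\bar{v}}}$; Schur triangularization of $H^{i i}$ so that $\sigma(\underline{F}_{e}^{i}) = \bigcup_{k} \sigma(\tilde{S} - \lambda_{k} B_{e}^{i} B_{e}^{i \top} P^{i})$; and the complex Lyapunov identity $A_{c}^{*} P^{i} + P^{i} A_{c} = -2(\text{Re}(\lambda_{k}) - \kappa^{i}) P^{i} B_{e}^{i} B_{e}^{i \top} P^{i} - a^{i} I_{n_{\bar{v}}}$ followed by the eigenvector test --- is precisely the mechanism behind that cited theorem, so the mathematical content coincides; you have simply internalized the reference. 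The trade-off is the expected one: the paper's version is shorter and leans on the established MAS literature, while yours makes the lemma independent of \cite{Lewis2013cooperative} and exposes explicitly where the coupling-gain condition $\text{Re}(\lambda_{H}^{i}) \geq \kappa^{i}$ (and hence the network topology, through Lemma \ref{lem:H}) enters the finite-dimensional stabilization, namely in the sign of the excess term $-2(\text{Re}(\lambda_{k}) - \kappa^{i}) P^{i} B_{e}^{i} B_{e}^{i \top} P^{i}$; your handling of the complex eigenvalues of the nonsymmetric $H^{i i}$ via the conjugate transpose is exactly the care this step needs.
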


\begin{proof}
	Since the leader is the root of $\mathcal{G}$, the condition of Lemma \ref{lem:H} is fulfilled and the eigenvalues $\lambda_{H}$ of the leader-follower matrix $H$ have a positive real part. Due to the block-triangular structure of $H$, the same holds for all block-matrices $H^{i i}$ on its diagonal, i.e., $\text{Re}(\lambda_{H}^{i}) > 0, \forall \lambda_{H}^{i} \in \sigma(H^{i i})$. The rest of the proof directly follows from \cite[Th. 3.1]{Lewis2013cooperative}.
\end{proof}

\subsection{Stability of the Networked Controlled Nominal MAS}\label{sec:controller.nominal.stability}%
Next, the stability of the MAS in the nominal case with the previously designed cooperative controller is verified.
By introducing the state $\breve{e}_{\bar{v}}^{i} = \col(e_{\bar{v}}^{1}, \ldots, e_{\bar{v}}^{i})$, $i = 1, \ldots, g$, the cascaded ODEs \eqref{eq:group.decoupled.regulator} are stacked.
Furthermore, the grouped PDE--ODE subsystem \eqref{eq:group.decoupled.pde}--\eqref{eq:group.decoupled.ode} is separated into the individual agents.
Hence, the target system \eqref{eq:group.decoupled} can be viewed as the ODE--PDE--ODE cascades
\begin{subequations}\label{eq:group.decoupled.split}
	\begin{align}
		\dot{\breve{e}}_{\bar{v}}^{i}(t) &= \underline{\breve{F}}_{e}^{i} \breve{e}_{\bar{v}}^{i}(t)\label{eq:group.decoupled.split.regulator}\\
		\partial_t \tilde{\varepsilon}^{i \gamma}(z, t) &= \Lambda^{i}(z) \partial_z \tilde{\varepsilon}^{i \gamma}(z, t) + \tilde{A}_{0}^{i}(z) \tilde{\varepsilon}^{i \gamma}_{-}(0, t) \label{eq:group.decoupled.split.pde}	\\
		\tilde{\varepsilon}^{i \gamma}_{+}(0, t) &= Q_{0}^{i} \tilde{\varepsilon}^{i \gamma}_{-}(0, t)\label{eq:group.decoupled.split.bc0}\\
		\tilde{\varepsilon}^{i \gamma}_{-}(1, t) &= K_{\bar{v}}^{i} ([ 0_{\alpha^{i}}^{\top} \quad e_{\gamma, i}^{\top}] \otimes I_{n_{\bar{v}}}) \breve{e}_{\bar{v}}^{i}(t)\label{eq:group.decoupled.split.bc1}\\
		\dot{w}^{i \gamma}(t) &= \tilde{F}_{w}^{i} w^{i \gamma}(t) + B_{w}^{i} \tilde{\varepsilon}^{i \gamma}_{-}(0, t)\label{eq:group.decoupled.split.ode}
	\end{align}
\end{subequations}
for $i=1, \ldots, g$, $\gamma = 1, \ldots, N^{i}$, illustrated by
\begin{center}
	\begin{tikzpicture}[every path/.style={draw, thick, line  join=bevel}, node distance = 10mm and 10mm]
		\myTikzSet
		\blockTextSize
		%
		%
		\node(uIn) at (0, 0){};
		\node(PIDE)[frame, \stableColor, right=of uIn, align=center]{PDE\subText{\eqref{eq:group.decoupled.split.pde}--\eqref{eq:group.decoupled.split.bc1}}};
		\node(ODE)[frame, \stableColor, right=15mm of PIDE, align=center]{ODE\subText{\eqref{eq:group.decoupled.split.ode}}};
		\draw[->] (PIDE.east) to node[above=\pfeilTextAbstand, align=center](pide2ode){$\tilde{\varepsilon}^{i \gamma}(0, t)$} (ODE.west);
		\begin{pgfonlayer}{back}
			\node(agent)[frame, \stableColor, fit=(PIDE)(ODE), inner xsep=8pt, inner ysep=3pt]{};
		\end{pgfonlayer}
		%
		%
		\node(IM)[frame, \stableColor, left=12mm of PIDE, align=center]{ODE\subText{\eqref{eq:group.decoupled.split.regulator}}};
		\draw[->] (IM) to node[above=\pfeilTextAbstand, align=center, near start](ev){~~~$\breve{e}_{\bar{v}}^{i}(t)$} (PIDE);
	\end{tikzpicture}
\end{center}
To obtain \eqref{eq:group.decoupled.split.bc1} from \eqref{eq:group.decoupled.bc1},
$e_{\bar{v}}^{i \gamma}(t) = (e_{\gamma, i}^{\top} \otimes I_{n_{\bar{v}}}) e_{\bar{v}}^{i}(t) \in \mathbb{R}^{n_{\bar{v}}}$
is utilized, where $e_{\gamma, i} \!\in\! \mathbb{R}^{N^{i}}$ is the $\gamma$-th unit vector.
Then, it follows from the partition
$\breve{e}_{\bar{v}}^{i} = \col(e_{\bar{v}}^{1}, \ldots, e_{\bar{v}}^{i})$
that $e_{\bar{v}}^{i \gamma}(t) = ([ 0_{\alpha^{i}}^{\top} ~~ e_{\gamma, i}^{\top}] \otimes I_{n_{\bar{v}}}) \breve{e}_{\bar{v}}^{i}(t)$
with $\alpha^{i} = \sum_{j=1}^{i-1} N^{j}$
and
$0_{\alpha^{i}} \in \mathbb{R}^{\alpha^{i}}$.
%
Moreover, the matrix 
\begin{align}\label{eq:group.decoupled.split.Fe}
	\underline{\breve{F}}_{e}^{i} = \begin{bmatrix*}
		\underline{F}_{e}^{1} & \cdots & 0\\[-0.8ex]
		\vdots & \ddots & \vdots \\[-0.3ex]
		-H^{i 1} \Otimes B_{e}^{1} K_{\bar{v}}^{1} & \cdots & \underline{F}_{e}^{i}
	\end{bmatrix*}
\end{align}
in \eqref{eq:group.decoupled.split.regulator} is lower block-triangular because of the sum in \eqref{eq:group.decoupled.regulator}.
After these preparations, the following theorem states the stability result for the nominal case.

\begin{theorem}[Nominal Stability] \label{th:nominalStability}
	Assume that $\tilde{F}_{w}^{i}$ and $\underline{F}_{e}^{i}$, $i = 1, \ldots, g$, are Hurwitz matrices.
	Then, the \textit{nominal networked controlled MAS} consisting of the nominal agents \eqref{eq:agent.local.nominal} and the cooperative state feedback regulator \eqref{eq:controller} with \eqref{eq:controller.u}, \eqref{eq:controller.gain.local} and \eqref{eq:controller.gain.cooperative} is asymptotically stable pointwise in space for piecewise continuous ICs $\bm{x}(z, 0) = \bm{x}_{0}(z) \in \mathbb{R}^{n^{i}}$ and $\bm{w}(0) \in \mathbb{R}^{n_{w}^{i}}$, $\bar{\bm{v}}(0) \in \mathbb{R}^{n_{\bar{v}}}$, for all $i = 1, \ldots, g$, $\gamma = 1, \ldots, N^{i}$.
\end{theorem}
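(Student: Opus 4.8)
The plan is to show that the sequence of invertible coordinate changes introduced in the design maps the networked controlled nominal MAS onto the cascade target system \eqref{eq:group.decoupled.split}, and then to verify asymptotic stability of this cascade directly. Composing the local decoupling transformation \eqref{eq:transformation.local.decoupling}, the backstepping transformation \eqref{eq:transformation.backstepping} and the cooperative decoupling transformation \eqref{eq:transformation.cooperative.decoupling} yields an operator that is bounded with a bounded inverse: the decoupling map $\Sigma^{i}$ is continuous by Lemma \ref{lem:decouplingEquations.local}, the backstepping kernel $K^{i}$ together with its reciprocal $K_{I}^{i}$ is continuous on $\bar{\Omega}$, and $\Pi_{x}^{i}$, $\Pi_{w}^{i}$ are piecewise $C^{1}$ by Lemma \ref{lem:decouplingEquations.cooperative}. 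Hence convergence of the target state $(\breve{e}_{\bar{v}}^{i}, \tilde{\varepsilon}^{i \gamma}, w^{i \gamma})$ to zero is equivalent to convergence of the original closed-loop state, and it suffices to establish asymptotic stability pointwise in space of \eqref{eq:group.decoupled.split}. Since the reference $r$ does not enter the stabilization, the target system is autonomous.

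I would then exploit the triangular cascade structure of \eqref{eq:group.decoupled.split}, working from top to bottom. First, the finite-dimensional subsystem \eqref{eq:group.decoupled.split.regulator} is autonomous with system matrix $\underline{\breve{F}}_{e}^{i}$ in \eqref{eq:group.decoupled.split.Fe}, which is lower block-triangular with diagonal blocks $\underline{F}_{e}^{1}, \ldots, \underline{F}_{e}^{i}$. As these are Hurwitz by hypothesis and the spectrum of a block-triangular matrix is the union of the spectra of its diagonal blocks, $\underline{\breve{F}}_{e}^{i}$ is Hurwitz and $\breve{e}_{\bar{v}}^{i}(t) \to 0$ exponentially for $i = 1, \ldots, g$. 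Through the controlled boundary condition \eqref{eq:group.decoupled.split.bc1}, this provides an exponentially decaying boundary input to the transport-PDE cascade \eqref{eq:group.decoupled.split.pde}--\eqref{eq:group.decoupled.split.bc1}.

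Next, I would resolve this PDE subsystem by the method of characteristics. Because $\tilde{A}_{0}^{i}$ enters only through the boundary value $\tilde{\varepsilon}_{-}^{i \gamma}(0, t)$ and $\tilde{A}_{0-}^{i}$ is strictly lower triangular, the system is a cascade of scalar transport equations that is finite-time stable for a vanishing boundary input, as discussed in Section \ref{sec:controller.design.localStabilization}. Tracing $\tilde{\varepsilon}^{i \gamma}(z, t)$ along characteristics shows that, beyond the settling time, the state is a finite sum of delayed copies of the exponentially decaying boundary data, so $\tilde{\varepsilon}^{i \gamma}(z, t) \to 0$ pointwise in $z$; in particular $\tilde{\varepsilon}_{-}^{i \gamma}(0, t) \to 0$. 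Finally, this vanishing signal drives the ODE \eqref{eq:group.decoupled.split.ode}, whose system matrix $\tilde{F}_{w}^{i}$ is Hurwitz by assumption, so the variation-of-constants formula gives $w^{i \gamma}(t) \to 0$. Collecting these facts, every component of the target state converges to zero, and undoing the invertible transformations establishes the claimed asymptotic stability pointwise in space.

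The main obstacle I anticipate is not the cascade reasoning but the careful justification that the composite coordinate change preserves the relevant topology, i.e., that it is bounded with a bounded inverse between the spaces of piecewise continuous functions in which the ICs live, so that convergence is genuinely transferred between the original and target coordinates. This is delicate because the output operator $\tilde{\mathcal{C}}_{x}^{i}$ contains point evaluations and in-domain Dirac contributions and because $\Pi_{x}^{i}$ is only piecewise $C^{1}$, so the integral term in \eqref{eq:transformation.cooperative.decoupling} and the associated boundary feedback must be shown to remain well defined and continuous. Establishing well-posedness of the closed loop together with the norm equivalence under the transformations is therefore where most of the technical effort lies, whereas the spectral arguments for the decoupled cascade are then routine.
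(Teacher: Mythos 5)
Your proposal is correct and follows essentially the same route as the paper's proof: exponential decay of the block-triangular finite-dimensional cascade $\underline{\breve{F}}_{e}^{i}$, the method of characteristics combined with the strictly lower triangular structure of $\tilde{A}_{0-}^{i}(z)$ to obtain pointwise convergence of the transport states, convergence of $w^{i\gamma}$ via the Hurwitz matrix $\tilde{F}_{w}^{i}$, and finally transfer back through the boundedly invertible transformations \eqref{eq:transformation.local.decoupling}, \eqref{eq:transformation.backstepping}, \eqref{eq:transformation.cooperative.decoupling}. The technical point you flag about invertibility of the composite coordinate change and well-posedness is handled in the paper in exactly the same spirit (and deferred in part to a remark following the theorem), so there is no substantive difference.
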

\begin{proof}
	The block-triangular form of $\underline{\breve{F}}_{e}^{i}$ in \eqref{eq:group.decoupled.split.Fe} implies $||\breve{e}_{\bar{v}}^{i}(t)|| \leq M_{\breve{e}} \e^{\alpha_{\breve{e}}^{i} t} ||\breve{e}_{\bar{v}}^{i}(0)||$, $t\geq0$, with $\alpha_{\breve{e}}^{i} = \max_{\lambda_{\breve{e}} \in \sigma(\underline{F}_{e}^{1}) \cup \cdots \cup \sigma(\underline{F}_{e}^{i})} \text{Re} ( \lambda_{\breve{e}}) < 0$, $M_{\breve{e}} \geq 1$.
	Then, by making use of the method of characteristics (see \cite[Prop. 2.1]{Hu19}) the solution $\tilde{\varepsilon}^{i \gamma}(z, t)$ of the PDE subsystem can be computed in terms of the $\breve{e}_{\bar{v}}^{i}(t)$ and the  IC $\tilde{\varepsilon}^{i \gamma}(z, 0)$.
	The latter IC is piecewise continuous in view of the backstepping transformation \eqref{eq:transformation.backstepping} and the IC $\bm{x}(z, 0) = x^{i \gamma}(z, 0)$ being piecewise continuous.
	Hence, due to the cascade structure of the PDEs caused by the strictly lower triangular matrix $\tilde{A}_{0-}^{i}(z)$, the solution $\tilde{\varepsilon}^{i \gamma}(z, t)$ depends only for a finite time on the IC and is thus continuous.
	Then, it is smooth because of the boundary coupling to the ODE \eqref{eq:group.decoupled.split.regulator}, which decays exponentially.
	Thus, it follows that $\lim_{t\to\infty} \tilde{\varepsilon}^{i \gamma}(z, t) = 0$, $z \in [0, 1]$, pointwise in space.
	Consequently, $\lim_{t\to\infty} \tilde{\varepsilon}^{i \gamma}_{-}(0, t) = 0$ in \eqref{eq:group.decoupled.split.ode} results and, since $\tilde{F}_{w}^{i}$ is Hurwitz, one has $\lim_{t\to\infty} \bm{w}(t) = 0$.
	Then, by going through the boundedly invertible transformations \eqref{eq:transformation.cooperative.decoupling}, \eqref{eq:transformation.backstepping} and \eqref{eq:transformation.local.decoupling}, the asymptotic stability of the nominal networked controlled MAS \eqref{eq:agent.local.nominal}, \eqref{eq:controller} is shown.
\end{proof}
\begin{remark}
	By a suitable design for $\underline{F}_{e}^{i}$ and $\tilde{F}_{w}^{i}$ the convergence of the nominal networked controlled MAS can be systematically specified.\remarkEnd
\end{remark}
\begin{remark}
	Note that the computation of the local state feedback according to Section \ref{sec:controller.design.localStabilization} as well as the cooperative decoupling equations \eqref{eq:decouplingEquations.cooperative} are independent of the number $N^{i}$ of followers within each subgroup $\mathcal{V}^{i}$, $i = 1, \ldots, g$, hence \textit{scalability} of the networked controller design is ensured.\remarkEnd
\end{remark}
\begin{remark}
	Similar calculations as in the proof of Theorem \ref{th:nominalStability} yield the explicit unique solution of the target system \eqref{eq:group.decoupled.split} for piecewise continuous ICs, which varies continuously with the ICs.
	This asserts for the target system the well-posedness (see \cite{Cu95}).
	Then, by going through the boundedly invertible transformations, the well-posedness also for the nominal networked controlled MAS is verified.\remarkEnd
\end{remark}
%
%
%
\section{Robust Cooperative Output Regulation}\label{sec:robustCooperativeOutputRegulation}{\acceptancenotice\copyrightnotice}%
In this section it is shown that cooperative output regulation \eqref{eq:trackingProblem} is achieved both in the nominal case and even if the followers are subject to model uncertainties which do not destabilize the closed-loop system. This is the main result of this section and is stated next.
\begin{theorem}[Robust Cooperative Output Regulation]\label{th:robustOutputRegulation}
	Assume that the uncertain networked controlled MAS consisting of the followers \eqref{eq:agent} with unknown model uncertainties and the networked controller \eqref{eq:controller}, \eqref{eq:controller.u} is asymptotically stable pointwise in space.
	Then, the cooperative state feedback regulator \eqref{eq:controller}, \eqref{eq:controller.u} achieves \textit{robust cooperative output regulation} \eqref{eq:trackingProblem}, i.e., $\lim_{t\to\infty} e_{y}^{i \gamma}(t) = 0$ for any disturbance input matrices $\bm{G}, \bm{G}_{1}, \bm{G}_{0}, \bm{G}_{w}, \bm{G}_{y}$, for all reference and disturbance inputs modeled by \eqref{eq:signalModel.joint} and for any piecewise continuous ICs of the agents and the controller.
\end{theorem}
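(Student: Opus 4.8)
The plan is to establish the robust \emph{internal model principle} in the cooperative setting: construct an invariant manifold of the uncertain closed-loop on which every tracking error vanishes, and then invoke the assumed pointwise asymptotic stability to force convergence onto it. Since the regulator \eqref{eq:controller}, \eqref{eq:controller.u} is a state feedback whose \emph{only} error-driven component is the cooperative internal model \eqref{eq:controller.dynamics}, the robustness will originate entirely from the $\bar{n}_{-}$ embedded copies of $S$, so the argument may not use the (uncertain) agent parameters.

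First I would render the uncertain closed-loop autonomous by appending the joint signal model \eqref{eq:signalModel.joint}, $\dot v = S v$, which generates $r = \tilde P_{r} v$ and all $d^{i \gamma} = \tilde P_{d}^{i \gamma} v$. Because the system is linear, the closed-loop generator is stable by assumption, and $\sigma(S) \subset \text{j}\mathbb{R}$ is disjoint from its spectrum, a steady state linear in $v$ is expected: maps $\Pi_{x}^{i \gamma}(z)$, $\Pi_{w}^{i \gamma}$, $\Pi_{\bar v}^{i \gamma}$ with $x^{i \gamma}_{ss}(z,t) = \Pi_{x}^{i \gamma}(z) v(t)$, $w^{i \gamma}_{ss}(t) = \Pi_{w}^{i \gamma} v(t)$ and $\bar v^{i \gamma}_{ss}(t) = \Pi_{\bar v}^{i \gamma} v(t)$. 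Substituting into the uncertain agents \eqref{eq:agent} and the internal model \eqref{eq:controller.dynamics} yields the \emph{extended regulator equations}: a Volterra boundary--value problem in $z$ for $\Pi_{x}^{i \gamma}(z)$ (with boundary relations from \eqref{eq:agent.bc0}, \eqref{eq:agent.bc1} coupled to the ODE) together with Sylvester equations for $\Pi_{w}^{i \gamma}$ and $\Pi_{\bar v}^{i \gamma}$. I expect their solvability for the coupled hyperbolic PIDE--ODE agents to be the main obstacle: the finite-dimensional non-resonance argument is not sufficient since $\Pi_{x}^{i \gamma}$ is an operator-valued unknown, the construction of \cite{Deu17a} does not transfer directly, and a dedicated existence proof (e.g.\ successive approximations for the Volterra part, as in the proof of Lemma \ref{lem:decouplingEquations.local}) is required.

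Granting the manifold, the zero-error property follows from the internal model. Writing $\col(e_{y}^{i \gamma})$ and using $y^{i \gamma} - y^{j \delta} = e_{y}^{i \gamma} - e_{y}^{j \delta}$, $y^{i \gamma} - r = e_{y}^{i \gamma}$ together with $d_{\mathcal G}^{i \gamma} = a_{\gamma}^{i 0} + \sum a_{\gamma \delta}^{i j}$, the diffusive term in \eqref{eq:controller.dynamics} equals the $(i\gamma)$-block of $(H \Otimes I_{\bar{n}_{-}}) \col(e_{y}^{i \gamma})$. Denoting the steady-state error map by $R$, so that $\col(e_{y,ss}^{i \gamma}) = R v$, the internal-model rows of the regulator equations read
\begin{align}
	\Pi_{\bar v} S - (I_{N \bar{n}_{-}} \Otimes S) \Pi_{\bar v} &= (I_{N \bar{n}_{-}} \Otimes b_{y})(H \Otimes I_{\bar{n}_{-}}) R ,
\end{align}
with $\Pi_{\bar v} = \col(\Pi_{\bar v}^{i \gamma})$ and using $I_{N} \Otimes \tilde S = I_{N \bar{n}_{-}} \Otimes S$, $I_{N} \Otimes \tilde B_{y} = I_{N \bar{n}_{-}} \Otimes b_{y}$. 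Testing this Sylvester relation on an eigenpair $S \nu = \mu \nu$, $\mu \in \sigma(S)$, against a left eigenvector $p$ of $S$ at $\mu$ gives, via the PBH solvability condition, $(p^{*} b_{y})(H \Otimes I_{\bar{n}_{-}}) R \nu = 0$. Controllability of $(S, b_{y})$ yields $p^{*} b_{y} \neq 0$, hence $(H \Otimes I_{\bar{n}_{-}}) R \nu = 0$, and since the leader $\nu^{0}$ is the root of $\mathcal G$, Lemma \ref{lem:H} makes $H$ invertible, so $R \nu = 0$. Ranging over the (generalized) eigenvectors of $S$ forces $R = 0$, i.e.\ $y^{i \gamma}_{ss} = r$ for every agent. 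This step uses only the controller data $S$, $b_{y}$ and the topology, and is therefore parameter-independent.

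Finally, the deviation $\col(x^{i \gamma}, w^{i \gamma}, \bar v^{i \gamma})(t) - \Pi v(t)$ satisfies the homogeneous stable closed-loop, so by the assumed pointwise asymptotic stability it decays to zero; consequently $e_{y}^{i \gamma}(t) \to R^{i \gamma} v(t) = 0$ for all $i, \gamma$, every disturbance input matrix $\bm{G}, \bm{G}_{0}, \bm{G}_{1}, \bm{G}_{w}, \bm{G}_{y}$, and arbitrary ICs, which is precisely \eqref{eq:trackingProblem}. The only place where the nominal design enters is the stability hypothesis itself; the zero-error conclusion of the preceding paragraph holds for all admissible model uncertainties, which is exactly the robustness granted by the cooperative internal model principle.
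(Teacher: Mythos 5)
Your overall architecture coincides with the paper's: append the exosystem \eqref{eq:signalModel.joint}, seek a steady state linear in $v$ (the extended regulator equations), and use the assumed stability to force convergence onto it. Your Francis--Wonham-style argument that the internal model annihilates the steady-state error map $R$ is a genuinely different (and elegant) way to get zero error -- the paper instead builds the zero-error condition \eqref{eq:extendedRegulatorEquations.output} directly into the regulator equations and proves those solvable. However, there is a genuine gap: everything after ``Granting the manifold'' presupposes exactly the statement you yourself concede is ``the main obstacle,'' namely the existence of the steady state, and you never prove it. That existence proof is not a deferrable technicality; it is the core of the paper's argument (Lemma \ref{lem:extendedRegulatorEquations}, proved in Appendix \ref{sec:appendix.extendedRegulatorEquations} via the transmission-zero Lemma \ref{lem:robustOutputRegulation.rankNumerator}). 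Neither of your two suggested repairs closes it. Successive approximations as in Lemma \ref{lem:decouplingEquations.local} do not apply: the local decoupling equations \eqref{eq:decouplingEquations.boundary} are an initial-value (Volterra) problem in $z$, whereas the invariance equations here are a two-point boundary-value problem (data at both $z=0$ and $z=1$, coupled to the ODE and internal-model states), whose mode-by-mode solvability hinges on the nonsingularity of a matrix at each $\mu\in\sigma(S)$ -- in the paper, the matrix in \eqref{eq:gamma.linearMatrixEquation}. And your non-resonance claim that ``$\sigma(S)\subset \text{j}\mathbb{R}$ is disjoint from the closed-loop spectrum'' is not justified by the hypothesis: the theorem assumes asymptotic stability \emph{pointwise in space}, a weak notion that for this infinite-dimensional class does not by itself place the imaginary axis in the resolvent set of the closed-loop generator. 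The paper bridges precisely this hole with a modal argument: if the matrix in \eqref{eq:modalSolution.initialValues} were singular at some $\mu\in\sigma(S)$, a non-decaying solution proportional to $\e^{\mu t}$ would exist, contradicting the assumed stability; this yields $\rank \underline{N}(\mu)=N\bar{n}_{-}$ and hence unique solvability of \eqref{eq:extendedRegulatorEquations}. Without this step (or an equivalent), your proof only shows: \emph{if} a steady state exists, then the error vanishes on it -- which is weaker than the theorem.

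A secondary, fixable issue: your eigenvector test disposes of genuine eigenvectors of $S$, but the signal class contains ramps, so $S$ has nontrivial Jordan blocks, and ``ranging over the (generalized) eigenvectors'' is not yet an argument. For a chain $S\nu_{2}=\mu\nu_{2}+\nu_{1}$ one gets $(\alpha^{\top}\otimes p^{*})\Pi_{\bar v}\nu_{1}=(p^{*}b_{y})\,\alpha^{\top}(H\otimes I_{\bar{n}_{-}})R\nu_{2}$, and concluding $R\nu_{2}=0$ additionally requires observing that $R\nu_{1}=0$ forces $\Pi_{\bar v}\nu_{1}\in\ker(\mu I-\underline{S})$, i.e.\ $\Pi_{\bar v}\nu_{1}=\beta\otimes\rho$, together with the orthogonality $p^{*}\rho=0$ valid for Jordan blocks of size at least two. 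This can be patched, but as written the chain case is open.
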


For the proof of this theorem, the transformations of Section \ref{sec:controller.design.localStabilization} are utilized.
In the first step, apply
\begin{subequations}\label{eq:transformation.extendedRegulatorEquations}
	\begin{align}
		\varepsilon^{i \gamma}(z, t) &= x^{i \gamma}(z, t) - \bar{\Sigma}^{i \gamma}(z) w^{i \gamma}(t)\\
		\tilde{\varepsilon}^{i \gamma}(z, t) &= \varepsilon^{i \gamma}(z, t) - \int_{0}^{z} \! \bar{K}^{i \gamma}(z, \zeta) \varepsilon^{i \gamma}(\zeta, t) \d\zeta\nonumber\\
			&= \bar{\mathcal{T}}^{i \gamma}[\varepsilon^{i \gamma}(t)](z)
	\end{align}
\end{subequations}
to the uncertain agents \eqref{eq:agent}, which yields
\begin{subequations}\label{eq:agent.transformed}
	\begin{align}
		\!\partial_t \tilde{\bm{\varepsilon}}(z, t) &= \bar{\bm{\Lambda}}(z) \partial_z \tilde{\bm{\varepsilon}}(z, t) \!+\! \mytilde{0.6}{3pt}{\mybar{0.5}{3.3pt}{\bm{A}}}_{0}(z) \tilde{\bm{\varepsilon}}_{-}(0, t) \!+\! \tilde{\bm{G}}(z) \bm{d}(t)\!\label{eq:agent.transformed.pde}\\
		\!\tilde{\bm{\varepsilon}}_{+}(0, t) &= \bar{\bm{Q}}_{0} \tilde{\bm{\varepsilon}}_{-}(0, t) + \bm{G}_{0} \bm{d}(t) \label{eq:agent.transformed.bc0}\\
		\!\tilde{\bm{\varepsilon}}_{-}(1, t) &= \bm{u}(t) + \tilde{\bm{\mathcal{K}}}[\tilde{\bm{\varepsilon}}(t)] + \tilde{\bm{K}}_{w} \bm{w}(t) + \bm{G}_1 \bm{d}(t) \label{eq:agent.transformed.bc1}\\
		\dot{\bm{w}}(t) &= \tilde{\bar{\bm{F}}}_{w} \bm{w}(t) + \bar{\bm{B}}_{w} \tilde{\bm{\varepsilon}}_{-}(0, t) + \bm{G}_w \bm{d}(t) \label{eq:agent.transformed.ode}\\
		\bm{y}(t) &= \bar{\bm{\mathcal{C}}}_{x} \bar{\bm{\mathcal{T}}}^{-1} [\tilde{\bm{\varepsilon}}(t)] + \tilde{\bar{\bm{C}}}_{w} \bm{w}(t) + \bm{G}_{y} \bm{d}(t)  \label{eq:agent.transformed.output}
	\end{align}
\end{subequations}
by similar calculations as in Section \ref{sec:controller.design.localStabilization}.
Thereby, $\bar{\bm{\Sigma}}$, $\bar{\bm{K}}$, $\mytilde{0.6}{3pt}{\mybar{0.5}{3.3pt}{\bm{A}}}_{0}$, $\tilde{\bar{\bm{F}}}_{w}$ follow from \eqref{eq:decouplingEquations.boundary}, \eqref{eq:kernel.backstepping} and the definition of $\tilde{F}_{w}^{i}$ in Section \ref{sec:controller.design.local.decoupling} by replacing the nominal parameters with the related uncertain parameters.
The other parameters are given in the Appendix \ref{sec:appendix.parameters}.
Next, applying the networked controller \eqref{eq:controller} to \eqref{eq:agent.transformed} as well as using the fact that the disturbance and reference inputs are the solution of the joint signal model \eqref{eq:signalModel.joint} yields the transformed aggregated networked controlled MAS subject to the joint signal model
\begin{subequations}\label{eq:extendedSystem.3}
	\begin{align}
		\dot{v}(t) &= S v(t)\label{eq:extendedSystem.3.signalModel}\\
		%
		%
		%
		\dot{\bar{v}}(t) &= \underline{S} \bar{v}(t) + (H \otimes \tilde{B}_{y}) e_{y}(t) \label{eq:extendedSystem.3.internalModel}\\
		%
		%
		\partial_t \tilde{\varepsilon}(z, t) &= \underline{\bar{\Lambda}}(z) \partial_z \tilde{\varepsilon}(z, t) \!+\! \underline{\bar{A}}_{0}(z) \tilde{\varepsilon}_{-}(0, t) \!+\! \underline{G}(z) v(t) \label{eq:extendedSystem.3.pde}\\
		\tilde{\varepsilon}_{+}(0, t) &= \underline{\bar{Q}}_{0} \tilde{\varepsilon}_{-}(0, t) + \underline{G}_{0} v(t) \label{eq:extendedSystem.3.bc0}\\
		\tilde{\varepsilon}_{-}(1, t) &= \underline{K}_{\bar{v}} \bar{v}(t) + \underline{\mathcal{K}}[\tilde{\varepsilon}(t)] + \underline{\tilde{K}}_{w} w(t) + \underline{G}_{1} v(t) \label{eq:extendedSystem.3.bc1}\\
		\dot{w}(t) &= \underline{\tilde{\bar{F}}}_{w} w(t) + \underline{\bar{B}}_{w} \tilde{\varepsilon}_{-}(0, t) + \underline{G}_{w} v(t) \label{eq:extendedSystem.3.ode}\\
		e_{y}(t) &= \underline{\bar{\mathcal{C}}}_{x}[\tilde{\varepsilon}(t)] + \underline{\tilde{\bar{C}}}_{w} w(t) + \underline{G}_{y} v(t) \label{eq:extendedSystem.3.output}
	\end{align}
\end{subequations}
with
$\bar{v} \!=\! \col(\bar{v}^{1 1}\!, ..., \bar{v}^{g N^{g}})$,
$\tilde{\varepsilon} \!=\! \col(\tilde{\varepsilon}^{1 1}\!, ..., \tilde{\varepsilon}^{g N^{g}})$,
$w \!=\! \col(w^{1 1}\!, ..., w^{g N^{g}})$,
$e_{y} \!=\! \col(e_{y}^{1 1}\!, ..., e_{y}^{g N^{g}})$
and the parameters given in Appendix \ref{sec:appendix.parameters}.
Then, the decoupling transformation\hspace*{-1pt}
\begin{subequations}\label{eq:decoupling.extendedRegulatorEquations}
	\begin{align}
		\xi_{\bar{v}}(t) &= \bar{v}(t) - \Upsilon\!_{\bar{v}} v(t) \label{eq:decoupling.extendedRegulatorEquations.v} \\
		\xi(z, t) &= \tilde{\varepsilon}(z, t) - \Upsilon\!_{\varepsilon}(z) v(t) \label{eq:decoupling.extendedRegulatorEquations.x}\\
		\xi_{w}(t) &= w(t) - \Upsilon\!_{w} v(t) \label{eq:decoupling.extendedRegulatorEquations.w}
	\end{align}
\end{subequations}
leads to the error system
\begin{subequations}\label{eq:extendedSystem.4}
	\begin{align}
		\dot{v}(t) &= S v(t)\label{eq:extendedSystem.4.signalModel}\\
		%
		%
		%
		\dot{\xi}_{\bar{v}}(t) &= \underline{S} \xi_{\bar{v}}(t) + (H \otimes \tilde{B}_{y}) e_{y}(t) \label{eq:extendedSystem.4.internalModel}\\
		%
		%
		\partial_t \xi(z, t) &= \underline{\bar{\Lambda}}(z) \partial_z \xi(z, t) \!+\! \underline{\bar{A}}_{0}(z) \xi_{-}(0, t) \label{eq:extendedSystem.4.pde}\\
		\xi_{+}(0, t) &= \underline{\bar{Q}}_{0} \xi_{-}(0, t) \label{eq:extendedSystem.4.bc0}\\
		\xi_{-}(1, t) &= \underline{K}_{\bar{v}} \xi_{\bar{v}}(t) + \underline{\mathcal{K}}[\xi(t)] + \underline{\tilde{K}}_{w} \xi_{w}(t) \label{eq:extendedSystem.4.bc1}\\
		\dot{\xi}_{w}(t) &= \underline{\tilde{\bar{F}}}_{w} \xi_{w}(t) + \underline{\bar{B}}_{w} \xi_{-}(0, t) \label{eq:extendedSystem.4.ode}\\
		e_{y}(t) &= \underline{\bar{\mathcal{C}}}_{x}[\xi(t)] + \underline{\tilde{\bar{C}}}_{w} \xi_{w}(t) \label{eq:extendedSystem.4.output}.
	\end{align}
\end{subequations}
If the subsystem \eqref{eq:extendedSystem.4.internalModel}--\eqref{eq:extendedSystem.4.output} is asymptotically stable, then $\lim_{t\to\infty} e_{y}(t) = 0$ follows from \eqref{eq:extendedSystem.4.output}, i.e., robust cooperative output regulation is achieved.
A straightforward calculation which includes taking the derivative of \eqref{eq:decoupling.extendedRegulatorEquations} w.r.t. time $t$, evaluating \eqref{eq:decoupling.extendedRegulatorEquations.x} at the boundaries and inserting the result in \eqref{eq:extendedSystem.3} as well as replacing $\bar{v}(t), \tilde{\varepsilon}(z, t), w(t)$ using \eqref{eq:decoupling.extendedRegulatorEquations} shows that \eqref{eq:extendedSystem.4} is obtained if $\Upsilon\!_{\bar{v}}, \Upsilon\!_{\varepsilon}, \Upsilon\!_{w}$ are the solution of the so-called \textit{extended regulator equations}. The following lemma states their solvability.

\begin{lemma}[Extended Regulator Equations]\label{lem:extendedRegulatorEquations}
	The \textit{extended regulator equations}
	\begin{subequations}\label{eq:extendedRegulatorEquations}
		\begin{align}
			\Upsilon\!_{\bar{v}} S - \underline{S} \Upsilon\!_{\bar{v}}  &= 0\label{eq:extendedRegulatorEquations.internalModel}\\
			\Upsilon\!_{w} S - \underline{\tilde{\bar{F}}}_{w} \Upsilon\!_{w} &= \underline{\bar{B}}_{w} \underline{E}_{-}^{\top} \Upsilon\!_{\varepsilon}(0) + \underline{G}_{w} \\
			\Upsilon\!_{\varepsilon}(z) S - \underline{\bar{\Lambda}}(z) \Upsilon\!_{\varepsilon}^{\,\prime}(z) &= \underline{\bar{A}}_{0}(z) \underline{E}_{-}^{\top} \Upsilon\!_{\varepsilon}(0) + \underline{G}(z)\\
			(\underline{E}_{+}^{\top} - \underline{\bar{Q}}_{0} \underline{E}_{-}^{\top}) \Upsilon\!_{\varepsilon}(0) &= \underline{G}_{0}\\
			\underline{E}_{-}^{\top} \Upsilon\!_{\varepsilon}(1) - \underline{\mathcal{K}}[\Upsilon\!_{\varepsilon}] &= \underline{K}_{\bar{v}} \Upsilon\!_{\bar{v}} + \underline{\tilde{K}}_{w} \Upsilon\!_{w} + \underline{G}_{1}\\
			\underline{\bar{\mathcal{C}}}_{x}[\Upsilon\!_{\varepsilon}] &= - \underline{\tilde{\bar{C}}}_{w} \Upsilon\!_{w} - \underline{G}_{y}\label{eq:extendedRegulatorEquations.output}
		\end{align}
	\end{subequations}
	on $z \in [0, 1)$ have a unique solution
	$\Upsilon\!_{\bar{v}} \in \mathbb{R}^{N n_{\bar{v}} \times n_{v}}$,
	$\Upsilon\!_{\varepsilon}(z) \in \mathbb{R}^{\sum_{i=1}^{g} N^{i} n^{i} \times n_{v}}$ and
	$\Upsilon\!_{w} \in \mathbb{R}^{\sum_{i=1}^{g} N^{i} n_{w}^{i} \times n_{v}}$
	with $\Upsilon\!_{\varepsilon}(z)$ piecewise $C^{1}$ if the subsystem \eqref{eq:extendedSystem.3.internalModel}--\eqref{eq:extendedSystem.3.output} is asymptotically stable.
\end{lemma}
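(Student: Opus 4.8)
The plan is to read the extended regulator equations \eqref{eq:extendedRegulatorEquations} as the componentwise form of a single operator Sylvester equation for the closed loop \eqref{eq:extendedSystem.3.internalModel}--\eqref{eq:extendedSystem.3.output}, together with an internal-model compatibility that forces the steady-state error to vanish. First I would assemble the subsystem \eqref{eq:extendedSystem.3.internalModel}--\eqref{eq:extendedSystem.3.output} (with $v\equiv0$) into a closed-loop generator $\mathcal{A}_{cl}$ on the state $\col(\bar v,\tilde\varepsilon,w)$, where the boundary conditions \eqref{eq:extendedSystem.3.bc0}, \eqref{eq:extendedSystem.3.bc1} are encoded in the domain of $\mathcal{A}_{cl}$ and the feedback of $e_y$ into the internal model is absorbed into $\mathcal{A}_{cl}$, while the $v$-forcing terms $\underline G,\underline G_0,\underline G_1,\underline G_w,\underline G_y$ are collected into an input operator $\mathcal{B}_{cl}$. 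Differentiating \eqref{eq:decoupling.extendedRegulatorEquations} in time and reading off the $v$-terms (the calculation the paper already indicates) shows that, with $\Upsilon:=\col(\Upsilon\!_{\bar v},\Upsilon\!_{\varepsilon},\Upsilon\!_{w})$, the equation $\mathcal{A}_{cl}\Upsilon-\Upsilon S=-\mathcal{B}_{cl}$ is exactly the second through fifth equations of \eqref{eq:extendedRegulatorEquations}, except that its internal-model row reads $\Upsilon\!_{\bar v}S-\underline S\Upsilon\!_{\bar v}=(H\otimes\tilde B_y)\Upsilon\!_{e_y}$ with $\Upsilon\!_{e_y}:=\underline{\bar{\mathcal{C}}}_x[\Upsilon\!_{\varepsilon}]+\underline{\tilde{\bar C}}_w\Upsilon\!_{w}+\underline G_y$, rather than the homogeneous \eqref{eq:extendedRegulatorEquations.internalModel}.

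Next I would establish existence and uniqueness of the Sylvester solution. By hypothesis the subsystem is asymptotically stable; for this hyperbolic PDE--ODE cascade that gives an exponentially decaying closed-loop semigroup $T(t)$ with $\sigma(\mathcal{A}_{cl})$ in the open left half-plane, whereas $\sigma(S)\subset\text{j}\mathbb{R}$ forces $\mathrm{e}^{-St}$ to grow at most polynomially. The spectral separation $\sigma(\mathcal{A}_{cl})\cap\sigma(S)=\emptyset$ then yields the explicit solution $\Upsilon=\int_0^\infty T(t)\mathcal{B}_{cl}\mathrm{e}^{-St}\,\mathrm{d}t$, the integral converging since the exponential decay of $T(t)$ dominates the polynomial growth of $\mathrm{e}^{-St}$. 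Uniqueness follows because any homogeneous solution $\Delta$ satisfies $T(t)\Delta\mathrm{e}^{-St}=\Delta$ for all $t$, whose limit as $t\to\infty$ is $0$. Reading $\mathcal{A}_{cl}\Upsilon-\Upsilon S=-\mathcal{B}_{cl}$ componentwise recovers the middle equations of \eqref{eq:extendedRegulatorEquations}, with $\Upsilon\!_{\varepsilon}$ inheriting piecewise-$C^1$ regularity from the transport structure of its governing ODE, the in-domain point outputs entering as jump conditions.

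The decisive step is to show that this unique $\Upsilon$ automatically satisfies the output equation \eqref{eq:extendedRegulatorEquations.output}, i.e.\ $\Upsilon\!_{e_y}=0$, which simultaneously collapses the internal-model row to the homogeneous \eqref{eq:extendedRegulatorEquations.internalModel}. Here the cooperative internal model enters and the results of \cite{Deu17a} do not transfer verbatim. I would argue mode by mode: for a Jordan chain $Sr_1=\mu r_1$, $Sr_j=\mu r_j+r_{j-1}$ of the cyclic $S$ at $\mu\in\sigma(S)$, applying the internal-model row to $r_j$ gives $(\mu I-\underline S)\Upsilon\!_{\bar v}r_j=(H\otimes\tilde B_y)\Upsilon\!_{e_y}r_j-\Upsilon\!_{\bar v}r_{j-1}$. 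Since $\underline S$ is block-diagonal with copies of $S$, left-multiplying by a left eigenvector $\eta=e_k\otimes\ell_\mu$ of $\underline S$ at $\mu$ annihilates the left-hand side; controllability of $(S,b_y)$ gives $\ell_\mu^{*}b_y\neq0$ via PBH, while cyclicity yields $\ell_\mu^{*}r_i=0$ for $i<m$, so the lower-chain term drops out inductively. Because $H$ is invertible by Lemma \ref{lem:H}, the rows $\{e_a^{\top}H\}$ span $\mathbb{R}^{1\times N}$, so the resulting compatibility conditions force $\Upsilon\!_{e_y}r_j=0$ along the whole chain; ranging over all generalized eigenvectors (which span $\mathbb{C}^{n_v}$) gives $\Upsilon\!_{e_y}=0$. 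Thus \eqref{eq:extendedRegulatorEquations.output} holds, \eqref{eq:extendedRegulatorEquations.internalModel} is recovered, and the unique Sylvester solution is the unique solution of the full system.

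I expect the main obstacle to be two-fold and technical rather than conceptual. First, making the operator Sylvester equation rigorous despite the unbounded boundary input and point-output operators $\underline G_0,\underline G_1$ and $\underline{\bar{\mathcal{C}}}_x$: this requires the well-posedness and admissibility of the PDE--ODE cascade (already invoked in the nominal analysis) rather than a naive bounded-perturbation argument, so that $\Upsilon$ is a genuine bounded map into the state space and its traces at $z=0,1$ exist. Second, carrying the internal-model compatibility argument through the full Jordan structure of $S$ on the imaginary axis, where the cancellations $\ell_\mu^{*}r_i=0$ for $i<m$ must be tracked together with the Kronecker factor $H\otimes\tilde B_y$ that distinguishes the cooperative setting from the single-plant case.
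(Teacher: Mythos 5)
Your overall strategy --- solve one inhomogeneous operator Sylvester equation for the closed loop and then show that the internal-model structure forces the output-forcing term $\Upsilon_{e_y}=\underline{\bar{\mathcal{C}}}_x[\Upsilon_{\varepsilon}]+\underline{\tilde{\bar{C}}}_w\Upsilon_w+\underline{G}_y$ to vanish --- is genuinely different from the paper's proof and is sound in outline. The paper never forms an abstract Sylvester equation: it right-multiplies \eqref{eq:extendedRegulatorEquations} by the Jordan chains $\rho_k^l$ of $S$, parametrizes the solution set of the homogeneous equation \eqref{eq:extendedRegulatorEquations.internalModel} by free vectors $\beta_k^l$, integrates the transport equations explicitly via the fundamental matrix, and reduces everything to the finite-dimensional linear system \eqref{eq:gamma.linearMatrixEquation}, whose coefficient matrix is nonsingular by Lemma \ref{lem:robustOutputRegulation.rankNumerator} (the transmission-zero condition $\rank\underline{N}(\mu)=N\bar{n}_-$ derived from stability). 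Your compatibility argument (left eigenvectors of $\underline{S}$, PBH for $(S,b_y)$, invertibility of $H$, induction along the chain using $\ell_\mu^{*}r_i=0$ below the top of the chain) is the same internal-model mechanism seen from the other side: the paper constructs the solution and needs the rank condition, you assume the solution exists and extract output-zeroing as a solvability constraint. Both mechanisms are correct.

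The genuine gap sits at the foundation of your existence/uniqueness step. The lemma assumes only that \eqref{eq:extendedSystem.3.internalModel}--\eqref{eq:extendedSystem.3.output} is asymptotically stable (pointwise in space), and you pass from this to ``an exponentially decaying closed-loop semigroup $T(t)$ with $\sigma(\mathcal{A}_{cl})$ in the open left half-plane'' without justification. In infinite dimensions, asymptotic (strong) stability does not imply exponential stability, and both your solution formula $\Upsilon=\int_0^\infty T(t)\mathcal{B}_{cl}\e^{-St}\,\d t$ and your uniqueness argument $T(t)\Delta\e^{-St}\to0$ require exponential decay to dominate the polynomial growth of $\e^{-St}$; under mere strong stability the integral can diverge. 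Closing this hole would require invoking and verifying a spectral mapping / spectrum-determined-growth result for this class of 1-D hyperbolic PIDE--ODE closed loops --- for the \emph{uncertain} system, whose coefficients are unknown. The paper's proof is engineered precisely to avoid this: asymptotic stability is used only to rule out non-decaying modal solutions $\e^{\mu t}$ at the finitely many frequencies $\mu\in\sigma(S)\subset\mathrm{j}\mathbb{R}$ (see \eqref{eq:modalSolution.initialValues}), which is exactly what makes the matrices in \eqref{eq:rank.2} and \eqref{eq:gamma.linearMatrixEquation} nonsingular; existence, uniqueness and the piecewise-$C^1$ regularity of $\Upsilon_{\varepsilon}$ then follow from finite-dimensional linear algebra and explicit formulas, with no semigroup theory. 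Relatedly, you flag but do not resolve the admissibility issues caused by the boundary inputs $\underline{G}_0,\underline{G}_1$ and the pointwise in-domain output operator; in the paper's modal construction these never arise.
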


The proof is given in the Appendix \ref{sec:appendix.extendedRegulatorEquations}.
By using the previous results, Theorem \ref{th:robustOutputRegulation} is verified in the following.

{\textit{Proof of Theorem \ref{th:robustOutputRegulation}: }
	Since the networked controlled MAS \eqref{eq:agent}, \eqref{eq:controller} is asymptotically stable by assumption and the map \eqref{eq:transformation.extendedRegulatorEquations} into \eqref{eq:extendedSystem.3.internalModel}--\eqref{eq:extendedSystem.3.output} is boundedly invertible, the latter system is also asymptotically stable. 
	Then, by Lemma \ref{lem:extendedRegulatorEquations} the mapping \eqref{eq:decoupling.extendedRegulatorEquations} into \eqref{eq:extendedSystem.4} exists.
	As \eqref{eq:extendedSystem.3.internalModel}--\eqref{eq:extendedSystem.3.output} for $v = 0$ coincides with \eqref{eq:extendedSystem.4.internalModel}--\eqref{eq:extendedSystem.4.output}, the asymptotic convergence $\lim_{t\to\infty} (\xi(z, t), \xi_{w}(t), \xi_{\bar{v}}(t)) = (0, 0, 0)$ is evident and due to \eqref{eq:extendedSystem.4.output} $\lim_{t\to\infty} e_{y}(t) = 0$ and thus robust output regulation is verified.
	\proofEnd
}

Obviously, cooperative output regulation in the nominal case is directly implied by the proof of Theorem \ref{th:robustOutputRegulation}.

\begin{remark}
	With $\tilde{e}_{y} = (H \otimes I_{\bar{n}_{-}}) e_{y}$ and $\underline{B}_{y} = I_{N} \otimes \tilde{B}_{y}$ one can rewrite \eqref{eq:extendedSystem.3.internalModel} in the form $\dot{\bar{v}} = \underline{S} \bar{v} + \underline{B}_{y} \tilde{e}_{y}$ of a classical internal model.
	Consequently, signals that are modeled by \eqref{eq:signalModel.joint} are rejected in the redefined tracking error $\tilde{e}_{y}$ in the steady state.
	Note that $\det (H \otimes I_{\bar{n}_{-}}) \neq 0$ if the graph $\mathcal{G}$ has the leader as its root (c.f. Lemma \ref{lem:H}).
	Hence, $\tilde{e}_{y} \equiv 0 \Rightarrow e_{y} \equiv 0$ follows.
	Thus, Theorem \ref{th:robustOutputRegulation} is reminiscent to the classical internal model principle.\remarkEnd
\end{remark}
%
%
%
\section{Example}\label{sec:example}{\acceptancenotice\copyrightnotice}%
\begin{figure}[t]\centering
	\DeclareDocumentCommand \tikzunderbrace{O{0.5\textwidth} O{0pt} m m m}{
		\tikz[overlay,remember picture]{
			\node[fit=(#3.south west) (#4.south east),inner ysep =#2, inner xsep=0] (gesamt#3) {};
			\draw[decorate, decoration=brace,thick] (gesamt#3.south east)--(gesamt#3.south west);
			\path (gesamt#3.south) node[below,yshift=-2mm]{\parbox{#1}{#5}};
		}	
	}
	\begin{tikzpicture}[arrow/.style={-{latex}},
		every path/.style={draw, thick, line  join=bevel},
		node distance = 7mm and 6mm,
		every state/.style={draw,minimum size=8mm}]
		
		\newcommand{\xDistance}{1.8}
		\newcommand{\scalePics}{0.27}
		\newcommand{\yRope}{1.3}
		
		\pic (rope1) [scale=\scalePics] at (-1*\xDistance, \yRope) {ropeStyle2={\footnotesize $\nu^{1 1}$}};
		\pic (rope2) [scale=\scalePics] at (-2*\xDistance, \yRope) {ropeStyle2={\footnotesize $\nu^{1 2}$}};
		\pic (rope3) [scale=\scalePics] at (-3*\xDistance, \yRope) {ropeStyle3={\footnotesize $\nu^{2 1}$}};
		\pic (rope4) [scale=\scalePics] at (-4*\xDistance, \yRope) {ropeStyle3={\footnotesize $\nu^{2 2}$}};
		\draw[decorate, decoration=brace, thick] (rope2-top-left) -- node[above] {\footnotesize$\mathcal{V}^{1} = \{\nu^{11}, \nu^{12} \}$} (rope1-top-right);
		\draw[decorate, decoration=brace, thick] (rope4-top-left) -- node[above] {\footnotesize$\mathcal{V}^{2} = \{\nu^{21}, \nu^{22} \}$} (rope3-top-right);
		
		\newcommand{\yDelta}{0}\small
		\draw [{latex}-{latex}] (rope1-left) -- node[above=-0.5mm]{$\Delta$} (rope2-right);
		\draw [{latex}-{latex}] (rope2-left) -- node[above=-0.5mm]{$\Delta$} (rope3-right);
		\draw [{latex}-{latex}] (rope3-left) -- node[above=-0.5mm]{$\Delta$} (rope4-right);
		\draw [thin] (rope1-below) --++ (0, -0.3) node (rHelper) {};
		\node [coordinate, below=2mm of rope1-below] (rHelper) {};
		\node [coordinate, left=60mm of rHelper] (rIn) {};
		\draw [{latex}-, leaderColor] (rHelper) -- node[below=-0.5mm]{$\footnotesize r(t)$} (rIn);
		
		\newcommand{\windX}{-1.40}
		\newcommand{\windY}{\yRope}
		\draw [matlabOrange, thick, -{Hooks[right,arc=280]}] 			(\windX+1.00, \windY+0.2) 	-- (\windX+0.25, \windY+0.2) node[above=-0.5mm]{\hspace{8mm} wind};
		\draw [matlabOrange, thick, -{Hooks[scale=1.4,right,arc=280]}] 	(\windX+0.95, \windY+0.1) -- (\windX+0.05, \windY+0.1);
		\draw [matlabOrange, thick, -{Hooks[scale=0.8,left,arc=280]}] 	(\windX+0.25, \windY+0.05) 	-- (\windX+0.02, \windY+0.05);
		\draw [matlabOrange, thick, -{Hooks[scale=1.1,left,arc=280]}] 	(\windX+0.90, \windY) 		-- (\windX+0.20, \windY);
		
		\node[state, leaderColor] (a0) at (-0*\xDistance, 0) {\footnotesize$\nu^{0}$};
		\node[state] (a1) at (-1*\xDistance, 0) {\footnotesize$\nu^{1 1}$};
		\node[state] (a2) at (-2*\xDistance, 0) {\footnotesize$\nu^{1 2}$};
		\node[state] (a3) at (-3*\xDistance, 0) {\footnotesize$\nu^{2 1}$};
		\node[state] (a4) at (-4*\xDistance, 0) {\footnotesize$\nu^{2 2}$};
		\draw[arrow] 							(a0) to node[above] {\footnotesize$2$} (a1);
		\draw[arrow, bend angle=20, bend right] (a1) to node[above] {\footnotesize$1$} (a2);
		\draw[arrow] 							(a2) to node[above] {\footnotesize$2$} (a3);
		\draw[arrow, bend angle=20, bend right] (a3) to node[above] {\footnotesize$1$} (a4);
		\draw[arrow, bend angle=20, bend right] (a4) to node[below] {\footnotesize$1$} (a3);
		\draw[arrow, bend angle=20, bend right] (a2) to node[below] {\footnotesize$1$} (a1);
	\end{tikzpicture}
	\caption{\unboldmath Platoon of the nominal heavy ropes, which are the followers $\nu^{1 1}, \nu^{1 2}, \nu^{2 1}, \nu^{2 2}$ with the constant distances $\Delta$ specifying the formation and the reference input $r(t)$ determining the platoons position.
		Below, the communication graph $\mathcal{G}$ with the leader agent $\nu^{0}$, the informed agent $\nu^{1 1}$ and the uninformed agents $\nu^{1 2}, \nu^{2 1}, \nu^{2 2}$ is shown, which can be grouped into $\mathcal{V}^{1} = \{\nu^{1 1}, \nu^{1 2}\}$ and $\mathcal{V}^{2} = \{\nu^{2 1}, \nu^{2 2}\}$.} 
	\label{pic:platoon}
\end{figure}
In this example the results of the paper are applied to a platoon of four heavy ropes for the cooperative transportation of loads, which is depicted in Fig.~\ref{pic:platoon}.
Therein, each rope represents a follower and they should keep constant distances $\Delta$ between their loads, even if there are disturbances like wind and uncertainties in the load masses and the rope lengths.
This formation control problem is significantly impeded by the wave dynamics of the ropes, giving rise to a swinging of the loads.
The leader $\nu^{0}$, which is the finite-dimensional reference model \eqref{eq:signalModel.leader}, specifies the position of the formation in terms of the reference signal $r(t)$.

\subsubsection{Network Topology}
The communication is restricted to the graph $\mathcal{G}$ shown in Fig. \ref{pic:platoon}.
Obviously, the leader $\nu^{0}$ is the only root of $\mathcal{G}$ so that the leader-follower matrix
\begin{align}\arraycolsep=1.3pt
	H \!=\! \left[\!\begin{array}{cc} H^{11} \!&\! 0 \\ H^{21} \!&\! H^{22}\end{array}\!\right]\!,
	H^{11} \!=\! H^{22} \!=\! \left[\!\begin{array}{cc} 3 \!&\! -1\\-1 \!&\! 1\end{array}\!\right]\!,
	H^{21} \!=\! \left[\!\begin{array}{cc} 0 \!&\! -2\\0 \!&\! 0\end{array}\!\right]
\end{align}
is nonsingular (c.f. Lemma \ref{lem:H}).
With this partition, the $N = 4$ followers $\nu^{1 1}, \nu^{1 2}, \nu^{2 1}, \nu^{2 2}$ can be split into $g=2$ strongly connected groups $\mathcal{V}^{1} = \{\nu^{1 1}, \nu^{1 2}\}$ and $\mathcal{V}^{2} = \{\nu^{2 1}, \nu^{2 2}\}$ with $N_{1} = N_{2} = 2$.
Thus, two subgroups of followers with distinct nominal dynamics can be taken into account, i.e., the followers in $\mathcal{V}^{1} = \{\nu^{1 1}, \nu^{1 2}\}$ have shorter ropes and carry lighter loads than the ones in $\mathcal{V}^{2} = \{\nu^{2 1}, \nu^{2 2}\}$ (see Fig.~\ref{pic:platoon}).

\subsubsection{Nominal Agent Dynamics}
Fig.~\ref{pic:rope} shows the model of a linearized rope (see \cite{Pe01}).
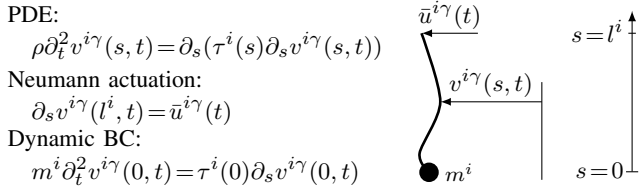
\begin{figure}[t]\centering
	\begin{tikzpicture}
		\tikzset{>=latex}
		\usetikzlibrary{decorations}
		\newcommand{\myTextSize}{\small}
		\newcommand{\myGapEqNameEq}{0.5ex}
		\newcommand{\ropeLength}{2.85}
		\newcommand{\xPosContainer}{0.1}
		\newcommand{\yPosContainer}{1}
		\myTextSize
		\newcommand{\axisXPos}{2.8}
		\draw [<-|] (\axisXPos, \ropeLength+0.3) -- (\axisXPos, \yPosContainer);
		\node [left] at (\axisXPos, \ropeLength+0.03) {$s\!=\!l^{i}$};
		\node [left] at (\axisXPos, 1+0.025) {$s\!=\!0$};
		\draw (\axisXPos-0.05, \ropeLength) -- (\axisXPos +0.05, \ropeLength);
		
		\draw [->] (0.75, \ropeLength) -- node[above=-0.5mm] {$\bar{u}^{i \gamma}(t)$} (0, \ropeLength);
		
		\draw [line width = 1] plot [smooth] coordinates {(0, \ropeLength) (0.25, 3.4*\ropeLength/5) (-0.025, 2.2*\ropeLength/5) (\xPosContainer, \yPosContainer)};
		
		\node [circle, draw, fill=black, scale = 0.75, line width = 1] at (\xPosContainer, \yPosContainer) {};
		\node at (\xPosContainer+5mm, \yPosContainer) {\footnotesize$m^{i}$};
		
		\newcommand{\myZero}{\xPosContainer+1.5}
		\draw [thin] (\myZero, \yPosContainer-0.1) -- (\myZero, 2.2);
		\draw [->] (\myZero, 3.4*\ropeLength/5) -- node[above=-0.5mm]{$v^{i \gamma}(s, t)$} (0.25, 3.4*\ropeLength/5);
		
		%
		%
		%
		\node at (-3, \ropeLength){
			\parbox{5cm}{\myTextSize \flushleft
				PDE:\\[\myGapEqNameEq]
				$\quad \rho \partial_{t}^{2} v^{i \gamma}(s, t) \!=\! \partial_{s} ( \tau^{i}(s) \partial_{s} v^{i \gamma}(s, t))$
			}
		};
		
		\node at (-3, \ropeLength - 0.75){
			\parbox{5cm}{\myTextSize \flushleft
				Neumann actuation:\\[\myGapEqNameEq]
				$\quad \partial_{s} v^{i \gamma}(l^{i}, t)\!=\!\bar{u}^{i \gamma}(t)$
			}
		};
		
		\node at (-3, \ropeLength -1.55){
			\parbox{5cm}{\myTextSize\flushleft
				Dynamic BC:\\[\myGapEqNameEq]%
				$\quad m^{i} \partial_{t}^{2} v^{i \gamma}(0, t) \!=\! \tau^{i}(0) \partial_{s} v^{i \gamma}(0, t)$
			}
		};
	\end{tikzpicture}
	\caption{\unboldmath Nominal model of the heavy ropes with a load for both groups $\mathcal{V}^{i}$, $i=1, 2$, with
		the rope lengths $l^{1} = 3\,\text{m}$, $l^{2} = 5\,\text{m}$, the density $\rho = 0.5 \frac{\text{kg}}{\text{m}}$,
		the load masses $m^{1} = 0.2\,\text{kg}$, $m^{2} = 1\,\text{kg}$,
		the gravity constant $g = 9.81 \frac{\text{m}}{\text{s}^2}$
		and the force in the ropes $\tau^{i}(s) = m^{i} g + \rho g s$.} 
	\label{pic:rope}
\end{figure}
The horizontal displacement $v^{i \gamma}(s, t) \in \mathbb{R}$ of the ropes is governed by a wave equation, while the inertia of the load is taken into account with a dynamic boundary condition.
Moreover, the control input $u^{i \gamma}(t)$ in the Neumann boundary condition at $s=l^{i}$ represents a force that is applied at the suspension point.
The wave dynamics are expressed on the spatial domain $[0, 1]$ by using $z = \frac{s}{l^{i}}$ as well as $\bar{v}^{i \gamma}(z, t) = v^{i \gamma}(z l^{i} , t)$.
Then, with
\begin{align}\label{eq:wave2riemann}
	\!x^{i \gamma\!}(z, t) = \e^{\int_{0}^{z} \frac{g}{l^{i}} \, (2 \epsilon^{i}(\zeta))^{-2} \d\zeta} \begin{bmatrix*} \epsilon^{i}(z) \!&\! 1 \\ -\epsilon^{i}(z) \!&\! 1 \end{bmatrix*} \!\begin{bmatrix*} \partial_{z} \bar{v}^{i \gamma\!}(z, t) \\ \partial_{t} \bar{v}^{i \gamma\!}(z, t) \end{bmatrix*}\!,\!
\end{align}
$\epsilon^{i}(z) = \sqrt{\frac{\tau(l^{i} z)}{(l^{i})^2 \rho}} $ and  $w^{i \gamma}(t) = \col(\bar{v}^{i \gamma}(0, t), \dot{\bar{v}}^{i \gamma}(0, t))$ the wave system is mapped into the nominal heterodirectional hyperbolic PDE--ODE system \eqref{eq:agent.local.nominal} with the parameters
\begin{align}\label{eq:rope.riemann.parameter}
	&\Lambda^{i}(z) \!=\! \begin{bmatrix*}\epsilon^{i}(z)\!& 0\\ 0 &\!-\epsilon^{i}(z)\!\end{bmatrix*}
	\!,~
	A^{i}(z) \!=\! \frac{g}{4 l^{i} \epsilon^{i}(z) }\begin{bmatrix*}0 & \!-1\\ 1 &\!0 \end{bmatrix*}
	\!,\!\\
	&\nonumber
	C_{0}^{i} \!=\! \begin{bmatrix*} 0 &\! 2 \end{bmatrix*}
	\!,
	F_{w}^{i} \!=\! \begin{bmatrix*} 0 &\! 1 \\ 0 &\! -\frac{g}{l^{i} \epsilon^{i}(0)} \end{bmatrix*}
	\!,
	B_{w}^{i} \!=\! \begin{bmatrix*} 0 \\ \frac{g}{l^{i} \epsilon^{i}(0)} \end{bmatrix*}
	\!,
	C_{w}^{i} \!=\! \begin{bmatrix*} 1 &\! 0 \end{bmatrix*}\!
\end{align}
$A_{0}^{i}(z) = 0,  
F^{i}(z, \zeta) = 0,
C^{i}(z) = 0,
Q_{0}^{i} = -1,
Q_{1}^{i} = 1$
and
$u^{i \gamma}(t) = 2 l^{i} \epsilon^{i}(1) \e^{\int_{0}^{1} \frac{g}{2 l^{i}} \, (\epsilon^{i}(\zeta))^{-2} \d\zeta} \bar{u}^{i \gamma}(t)$, $i=1, 2$.

\subsubsection{Cooperative Formation Control Problem}{\acceptancenotice\copyrightnotice}%
The followers are supposed to keep a distance of $\Delta = 2\,\text{m}$ between the loads.
This can be achieved by including the shift $\delta^{i \gamma}$ for each agent $\nu^{i \gamma}$ in the output to be synchronized, i.e.,
$y^{i \gamma}(t) = C_{w}^{i} w^{i \gamma}(t) + \delta^{i \gamma}$.
Then, due to the synchronization $\lim_{t\to\infty} (y^{i \gamma}(t) - r(t)) = 0$, the networked controller ensures that the load position $w^{i \gamma}_{1}(t) = C_{w}^{i} w^{i \gamma}(t)$ keeps the distance $\delta^{i \gamma}$ to the reference signal $r(t)$.
Hence, with $\delta^{1 1} = 0$, $\delta^{1 2} = \Delta$, $\delta^{2 1} = 2 \Delta$, $\delta^{2 2} = 3\Delta$ the desired platoon formation results.
To perform a set-point change, ramp-shaped reference signals $r(t) = \alpha_0 + \alpha_1 t$, $\alpha_0, \alpha_1 \in \mathbb{R}$, are used, while the disturbances $d^{i \gamma}(t) = d^{i \gamma}_{0} \in \mathbb{R}$ are assumed to be constant for considering the influence of the wind.
These signals are modeled by \eqref{eq:signalModel.leader} and \eqref{eq:signalModel.disturbance} with
\begin{align}\label{eq:signalModel.example.parameter}
	S_{r} = \begin{bmatrix*} 0&1 \\ 0&0 \end{bmatrix*}
	,\quad
	S_{d}^{i \gamma} = 0.
\end{align}
Then, for the joint signal model \eqref{eq:signalModel.joint} the matrix $S = S_{r}$ is the cyclic part of $\diag(S_r, S_{d}^{1 1}, \ldots, S_{d}^{2 2})$.
The disturbances are assumed to act only at the loads.
Thus, $G^{i \gamma}_{w} = \begin{bmatrix*}0&1\end{bmatrix*}{\!}^{\top}$, $G^{i \gamma}(z) = 0$, $G^{i \gamma}_{0} = 0$, $G^{i \gamma}_{1} = 0$ and $G^{i \gamma}_{y} = 0$ are the disturbance input matrices for $i=1, 2$, $\gamma=1, 2$.
In practice, the actual parameters differ from the nominal values.
This is taken into account by a perturbation of the nominal masses to
$\bar{m}^{1 1} = 0.25\,\text{kg}$,
$\bar{m}^{1 2} = 0.3\,\text{kg}$,
$\bar{m}^{2 1} = 1.4\,\text{kg}$,
$\bar{m}^{2 2} = 0.8\,\text{kg}$
and uncertainty in the rope lengths according to
$\bar{l}^{1 1} = 3.8\,\text{m}$,
$\bar{l}^{1 2} = 2.2\,\text{m}$,
$\bar{l}^{2 1} = 5.8\,\text{m}$,
$\bar{l}^{2 2} = 6\,\text{m}$.
Then, the uncertain follower dynamics \eqref{eq:agent} result by evaluating \eqref{eq:rope.riemann.parameter} with the latter parameters giving rise to uncertain transport velocities and coupling matrices.
Thus, the networked controller should ensure the formation in the presence of disturbances and has to be robust w.r.t. these model uncertainties.

\subsubsection{Networked Controller Design}
\textbf{(1) Cooperative internal model:}
For the cooperative internal model \eqref{eq:controller.dynamics} $b_{y} = \begin{bmatrix*}0&1\end{bmatrix*}{\!}^{\top}$ is chosen such that $(S, b_{y})$ is controllable.
Due to $\dim y^{i \gamma}(t) = \bar{n}_{-} = 1$, only one copy has to be incorporated, hence $\tilde{S} = S$, $\tilde{B}_{y} = b_{y}$ and $\dim \bar{v}^{i \gamma}(t) = n_{\bar{v}} = 2$ result for the two groups.

\textbf{(2) Local stabilization:}
In order to stabilize the ODE subsystem in \eqref{eq:agent.local.decoupled}, the gain matrices $K_{w}^{i}$, $i=1,2$, are designed by an eigenvalue placement, in order to ensure $\sigma(\tilde{F}_{w}^{i}) = \{-4, -4\}$, $i=1, 2$, (c.f. Section \ref{sec:controller.design.local.decoupling}).
Then, the solution of the local decoupling equations \eqref{eq:decouplingEquations.boundary} is calculated by solving the Volterra integral equation \eqref{eq:decouplingEquations.boundary.volterra} with the method of successive approximations.
If the maximum pointwise difference between two iterations is less then $10^{-9}$, then the computations are stopped, which occurs after $25$ steps.
The backstepping kernels $K^{i}(z, \zeta)$, $i=1, 2$, are calculated by solving the kernel equations \eqref{eq:kernel.backstepping} with the method of characteristics and successive approximations (see \cite{Hu19}).
For this, $18$ iterations are needed in order to obtain a maximum pointwise difference between two iterations of less then $10^{-3}$.

\textbf{(3) Simultaneous stabilization:}
To decouple the internal model, the solution of the cooperative decoupling equations \eqref{eq:decouplingEquations.cooperative} is computed by using the calculations of Appendix \ref{sec:appendix.decouplingEquations}.
With Lemma \ref{lem:controllability} the controllability of the decoupled ODEs, i.e., of the pairs $(\tilde{S}, B_{e}^{i})$, $i=1, 2$, is verified by evaluating the numerators $N^{i}(\mu)$, $i=1, 2$ at $\mu \in \sigma(S) = \{ 0, 0 \}$, i.e., $N^{1}(0) = 4.95$, $N^{2}(0) = 2.21$.
Then, the simultaneous stabilization problem for \eqref{eq:group.decoupled.regulator} is solved with the Matlab function \texttt{are} using the parameters
$\kappa^{1} = \kappa^{2} = 0.585$, $a^{1} = 55$, $a^{2} = 85$
in \eqref{eq:simultaneousStabilization.are}.
In particular, $\text{Re}(\lambda_{H}^{i}) \geq \kappa^{i} > 0$, $\forall \lambda_{H}^{i} \in \sigma(H^{ii}) = \{ 2\pm \sqrt{2} \}$, $a^{i} > 0$, $i=1, 2$, hold.
To utilize the networked controller for the agents with the wave dynamics shown in Fig.~\ref{pic:rope}, the map \eqref{eq:wave2riemann} must be inserted into the state feedback regulator \eqref{eq:controller.u} in order to determine it in terms of the position $v^{i \gamma}(s, t)$, bending $\partial_{s} v^{i \gamma}(s, t)$ and velocity $\partial_{t} v^{i \gamma}(s, t)$ of the ropes.

\subsubsection{Simulations}
\pgfplotsset{snapshotOpts/.style=
	{	width=0.415\linewidth,
		height=0.19\linewidth,
		at={(0\linewidth,0\linewidth)},
		scale only axis,
		every axis title/.append style={at={(current axis.north)}, yshift=-1.9ex, font=\small},
		every axis x label/.append style={font=\footnotesize, yshift=3mm},
		every axis y label/.append style={rotate=-90, font=\footnotesize, xshift=5ex},
		every tick label/.append style={font=\footnotesize},
		every x tick label/.append style={yshift=0.5mm},
		every y tick label/.append style={xshift=0.75mm},,
		every axis plot/.append style={line width=1pt,opacity=1, mark=*, mark indices=1},
		baseline,
		trim axis left, trim axis right,
		unit vector ratio=1 1 1,
		xtick={-8, -6, -4,..., 26},
		ymin=-0.7, ymax=6.3,
		major tick length = 2pt,
}}%
\newcommand{\earlySnap}{
	\,(\tikz[baseline]{
		\draw[ghostColor, line width=1pt] (0mm, 3pt) -- (4.75mm, 3pt);
	})
}
\newcommand{\lateSnap}{
	\,(\tikz[baseline]{
		\draw[colA0, densely dashed, line width=1pt] (0mm, -1pt) -- (4.75mm, -1pt);
		\draw[colA1, line width=1pt] (0mm, 0.75pt) -- (4.75mm, 0.75pt);
		\draw[colA2, line width=1pt] (0mm, 2.5pt) -- (4.75mm, 2.5pt);
		\draw[colA3, line width=1pt] (0mm, 4.25pt) -- (4.75mm, 4.25pt);
		\draw[colA4, line width=1pt] (0mm, 6pt) -- (4.75mm, 6pt);
	})
}
\newcommand{\lateSnapD}{
	\,(\tikz[baseline]{
		\draw[colA1, line width=1pt] (0mm, 0pt) -- (4.75mm, 0pt);
		\draw[colA2, line width=1pt] (0mm, 1.75pt) -- (4.75mm, 1.75pt);
		\draw[colA3, line width=1pt] (0mm, 3.5pt) -- (4.75mm, 3.5pt);
		\draw[colA4, line width=1pt] (0mm, 5.25pt) -- (4.75mm, 5.25pt);
	})
}
\newcommand{\loadSize}{2.2pt}
\newcommand{\myGrid}[2]{
	\addplot [color=colA0!100!black, densely dashed, line width=0.7pt, opacity=1, no markers] coordinates {(#1, -2) (#1, 8)};
}%
\newcommand{\myDelta}[2]{
	\draw [{latex}-{latex}] (axis cs:#1-0*#2-0.2, 0) -- node[above=-0.5mm]{\footnotesize$\Delta$} (axis cs:#1-1*#2+0.23, 0);
	\draw [{latex}-{latex}] (axis cs:#1-1*#2-0.23, 0) -- node[above=-0.5mm]{\footnotesize$\Delta$} (axis cs:#1-2*#2+0.35, 0);
	\draw [{latex}-{latex}] (axis cs:#1-2*#2-0.35, 0) -- node[above=-0.5mm]{\footnotesize$\Delta$} (axis cs:#1-3*#2+0.28, 0);
}%
{\begin{figure}[t]\centering%
		\begin{tikzpicture}
			\begin{axis}[plotOpts, xmin=0, xmax=30, xtick={0, 5, ..., 30}, ymin=-7, ymax=22, xlabel={$t$}]
				\addplot [opacity=0.2, samples=2, thin] coordinates {(10, -10) (10, 30)};
				\addplot [opacity=0.2, samples=2, thin] coordinates {(20, -10) (20, 30)};
				\addplot [color=colA4] table[x=t, y=agent4.w1]{pic/pgf/ropeMas_referenceTracking.dat};\label{pgf:a4}
				\addplot [color=colA3] table[x=t, y=agent3.w1]{pic/pgf/ropeMas_referenceTracking.dat};\label{pgf:a3}
				\addplot [color=colA2] table[x=t, y=agent2.w1]{pic/pgf/ropeMas_referenceTracking.dat};\label{pgf:a2}
				\addplot [color=colA1] table[x=t, y=agent1.w1]{pic/pgf/ropeMas_referenceTracking.dat};\label{pgf:a1}
				\addplot [color=colA0, samples=2, densely dashed] coordinates {(0, 0) (10, 15) (20, 20) (30, 20)};\label{pgf:a0}
				%
				\node[anchor=north] at (axis cs:19, 9) {\parbox{4cm}{\raggedleft\footnotesize\def\arraystretch{1.2}\arraycolsep=1.4pt
						$\begin{array}{rl@{\quad}rl}
							~&~									&\ref{pgf:a0}	&r(t)\\
							\ref{pgf:a1} 	&w_{1}^{1 1}(t)		&\ref{pgf:a2} 	&w_{1}^{1 2}(t)\\
							\ref{pgf:a3} 	&w_{1}^{2 1}(t) 	&\ref{pgf:a4} 	&w_{1}^{2 2}(t)
						\end{array}$}};
			\end{axis}
		\end{tikzpicture}\\[-1ex]%
		{\renewcommand{\arraystretch}{0}
		\setlength{\tabcolsep}{1pt}
		\begin{tabular}{ll}
			\begin{tikzpicture}%
				\begin{axis}[snapshotOpts, title={$t = 1 \earlySnap, t=1.5 \lateSnap$}, ylabel={$s$}]
					\myGrid{2.25}{1.5}
					\addplot [color=ghostColor, mark size=sqrt(0.25)*\loadSize, opacity=1] table[x=agent1.v, y=agent1.s]{pic/pgf/ropeMas_referenceTracking_5.dat};
					\addplot [color=ghostColor, mark size=sqrt(0.30)*\loadSize, opacity=1] table[x=agent2.v, y=agent2.s]{pic/pgf/ropeMas_referenceTracking_5.dat};
					\addplot [color=ghostColor, mark size=sqrt(1.40)*\loadSize, opacity=1] table[x=agent3.v, y=agent3.s]{pic/pgf/ropeMas_referenceTracking_5.dat};
					\addplot [color=ghostColor, mark size=sqrt(0.80)*\loadSize, opacity=1] table[x=agent4.v, y=agent4.s]{pic/pgf/ropeMas_referenceTracking_5.dat};
					\addplot [color=colA1, mark size=sqrt(0.25)*\loadSize] table[x=agent1.v, y=agent1.s]{pic/pgf/ropeMas_referenceTracking_1.dat};
					\addplot [color=colA2, mark size=sqrt(0.30)*\loadSize] table[x=agent2.v, y=agent2.s]{pic/pgf/ropeMas_referenceTracking_1.dat};
					\addplot [color=colA3, mark size=sqrt(1.40)*\loadSize] table[x=agent3.v, y=agent3.s]{pic/pgf/ropeMas_referenceTracking_1.dat};
					\addplot [color=colA4, mark size=sqrt(0.80)*\loadSize] table[x=agent4.v, y=agent4.s]{pic/pgf/ropeMas_referenceTracking_1.dat};
				\end{axis}
			\end{tikzpicture}%
			&
			\begin{tikzpicture}%
				\begin{axis}[snapshotOpts, title={$t = 9.5 \earlySnap, t=10 \lateSnap$}, xtick={5, 7, ..., 21}]
					\myGrid{15}{14.25}
					\addplot [color=ghostColor, mark size=sqrt(0.25)*\loadSize, opacity=1] table[x=agent1.v, y=agent1.s]{pic/pgf/ropeMas_referenceTracking_6.dat};
					\addplot [color=ghostColor, mark size=sqrt(0.30)*\loadSize, opacity=1] table[x=agent2.v, y=agent2.s]{pic/pgf/ropeMas_referenceTracking_6.dat};
					\addplot [color=ghostColor, mark size=sqrt(1.40)*\loadSize, opacity=1] table[x=agent3.v, y=agent3.s]{pic/pgf/ropeMas_referenceTracking_6.dat};
					\addplot [color=ghostColor, mark size=sqrt(0.80)*\loadSize, opacity=1] table[x=agent4.v, y=agent4.s]{pic/pgf/ropeMas_referenceTracking_6.dat};
					\addplot [color=colA1, mark size=sqrt(0.25)*\loadSize] table[x=agent1.v, y=agent1.s]{pic/pgf/ropeMas_referenceTracking_2.dat};
					\addplot [color=colA2, mark size=sqrt(0.30)*\loadSize] table[x=agent2.v, y=agent2.s]{pic/pgf/ropeMas_referenceTracking_2.dat};
					\addplot [color=colA3, mark size=sqrt(1.40)*\loadSize] table[x=agent3.v, y=agent3.s]{pic/pgf/ropeMas_referenceTracking_2.dat};
					\addplot [color=colA4, mark size=sqrt(0.80)*\loadSize] table[x=agent4.v, y=agent4.s]{pic/pgf/ropeMas_referenceTracking_2.dat};
				\end{axis}
			\end{tikzpicture}%
			\\
			\begin{tikzpicture}%
				\begin{axis}[snapshotOpts, title={$t = 20.5 \earlySnap, t=21 \lateSnap$}, xlabel={$v^{i \gamma}(s, t)$}, ylabel={$s$}]
					\myGrid{20}{20}
					\addplot [color=ghostColor, mark size=sqrt(0.25)*\loadSize, opacity=1] table[x=agent1.v, y=agent1.s]{pic/pgf/ropeMas_referenceTracking_7.dat};
					\addplot [color=ghostColor, mark size=sqrt(0.30)*\loadSize, opacity=1] table[x=agent2.v, y=agent2.s]{pic/pgf/ropeMas_referenceTracking_7.dat};
					\addplot [color=ghostColor, mark size=sqrt(1.40)*\loadSize, opacity=1] table[x=agent3.v, y=agent3.s]{pic/pgf/ropeMas_referenceTracking_7.dat};
					\addplot [color=ghostColor, mark size=sqrt(0.80)*\loadSize, opacity=1] table[x=agent4.v, y=agent4.s]{pic/pgf/ropeMas_referenceTracking_7.dat};
					\addplot [color=colA1, mark size=sqrt(0.25)*\loadSize] table[x=agent1.v, y=agent1.s]{pic/pgf/ropeMas_referenceTracking_3.dat};
					\addplot [color=colA2, mark size=sqrt(0.30)*\loadSize] table[x=agent2.v, y=agent2.s]{pic/pgf/ropeMas_referenceTracking_3.dat};
					\addplot [color=colA3, mark size=sqrt(1.40)*\loadSize] table[x=agent3.v, y=agent3.s]{pic/pgf/ropeMas_referenceTracking_3.dat};
					\addplot [color=colA4, mark size=sqrt(0.80)*\loadSize] table[x=agent4.v, y=agent4.s]{pic/pgf/ropeMas_referenceTracking_3.dat};
				\end{axis}
			\end{tikzpicture}%
			&
			\begin{tikzpicture}%
				\begin{axis}[snapshotOpts, title={$t = 29.5 \earlySnap, t=30 \lateSnap$}, xlabel={$v^{i \gamma}(s, t)$}]
					\myGrid{20}{20}
					\addplot [color=ghostColor, mark size=sqrt(0.25)*\loadSize, opacity=1] table[x=agent1.v, y=agent1.s]{pic/pgf/ropeMas_referenceTracking_8.dat};
					\addplot [color=ghostColor, mark size=sqrt(0.30)*\loadSize, opacity=1] table[x=agent2.v, y=agent2.s]{pic/pgf/ropeMas_referenceTracking_8.dat};
					\addplot [color=ghostColor, mark size=sqrt(1.40)*\loadSize, opacity=1] table[x=agent3.v, y=agent3.s]{pic/pgf/ropeMas_referenceTracking_8.dat};
					\addplot [color=ghostColor, mark size=sqrt(0.80)*\loadSize, opacity=1] table[x=agent4.v, y=agent4.s]{pic/pgf/ropeMas_referenceTracking_8.dat};
					\addplot [color=colA1, mark size=sqrt(0.25)*\loadSize] table[x=agent1.v, y=agent1.s]{pic/pgf/ropeMas_referenceTracking_4.dat};
					\addplot [color=colA2, mark size=sqrt(0.30)*\loadSize] table[x=agent2.v, y=agent2.s]{pic/pgf/ropeMas_referenceTracking_4.dat};
					\addplot [color=colA3, mark size=sqrt(1.40)*\loadSize] table[x=agent3.v, y=agent3.s]{pic/pgf/ropeMas_referenceTracking_4.dat};
					\addplot [color=colA4, mark size=sqrt(0.80)*\loadSize] table[x=agent4.v, y=agent4.s]{pic/pgf/ropeMas_referenceTracking_4.dat};
					\myDelta{20}{2}
				\end{axis}
			\end{tikzpicture}%
		\end{tabular}}
		\caption{\unboldmath
			Closed-loop reference behavior of the platoon for the piecewise ramp-like reference signal $r(t) = \alpha_{0} + \alpha_{1} t$.
			\textbf{Top:} The positions $w^{i \gamma}_{1}(t)$ of the loads are plotted.
			\textbf{Bottom:} Snapshots of the rope positions $v^{1 1}(s, t)$ \eqref{pgf:a1}, $v^{1 2}(s, t)$ \eqref{pgf:a2}, $v^{2 1}(s, t)$ \eqref{pgf:a3}, $v^{2 2}(s, t)$ \eqref{pgf:a4} at $t\in\{1, 1.5\}$, $t\in\{9.5, 10\}$, $t\in\{12, 12.5\}$, $t\in\{29.5, 30\}$ with the reference input $r(t)$ \eqref{pgf:a0} as vertical line.}
		\label{pic:referenceTracking.rope}%
\end{figure}
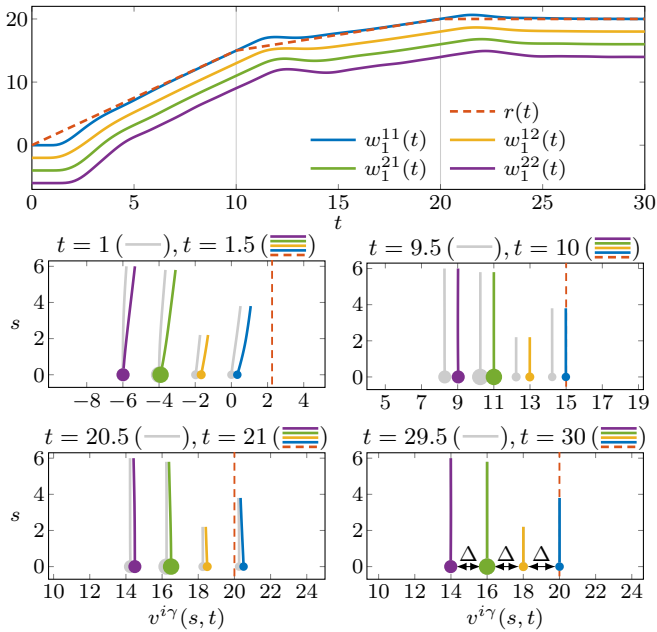}%
{\begin{figure}[t]\centering%
		\begin{tikzpicture}%
			\begin{axis}[plotOpts, xmin=0, xmax=30, xtick={0, 5, ..., 30}, xlabel={$t$}, ymin=-7, ymax=1.9,
				major grid style={line width=.3pt,draw=gray}, ymajorgrids=true,]
				\addplot [opacity=0.2, samples=2, thin] coordinates {(15, -10) (15, 10)};
				\addplot [color=colA4] table[x=t, y=agent4.w1]{pic/pgf/ropeMas_disturbanceRejection.dat};
				\addplot [color=colA3] table[x=t, y=agent3.w1]{pic/pgf/ropeMas_disturbanceRejection.dat};
				\addplot [color=colA2] table[x=t, y=agent2.w1]{pic/pgf/ropeMas_disturbanceRejection.dat};
				\addplot [color=colA1] table[x=t, y=agent1.w1]{pic/pgf/ropeMas_disturbanceRejection.dat};
				%
				\node[anchor=north] at (axis cs:15, 2.05) {\parbox{7cm}{\centering\footnotesize\def\arraystretch{1.2}\arraycolsep=1.4pt
						$\begin{array}{rr@{\quad}rr@{\quad}rr@{\quad}rr}
							\ref{pgf:a1} 	&w_{1}^{1 1}(t)		&\ref{pgf:a2} 	&w_{1}^{1 2}(t)&
							\ref{pgf:a3} 	&w_{1}^{2 1}(t) 	&\ref{pgf:a4} 	&w_{1}^{2 2}(t)
						\end{array}$}};
			\end{axis}
		\end{tikzpicture}\\[-1ex]%
		{\renewcommand{\arraystretch}{0}
		\setlength{\tabcolsep}{1pt}
		\begin{tabular}{ll}
			\begin{tikzpicture}%
				\begin{axis}[snapshotOpts, title={$t = 1.25 \earlySnap, t=1.75 \lateSnapD$}, ylabel={$s$}]
					\addplot [color=ghostColor, mark size=sqrt(0.25)*\loadSize, opacity=1] table[x=agent1.v, y=agent1.s]{pic/pgf/ropeMas_disturbanceRejection_5.dat};
					\addplot [color=ghostColor, mark size=sqrt(0.30)*\loadSize, opacity=1] table[x=agent2.v, y=agent2.s]{pic/pgf/ropeMas_disturbanceRejection_5.dat};
					\addplot [color=ghostColor, mark size=sqrt(1.40)*\loadSize, opacity=1] table[x=agent3.v, y=agent3.s]{pic/pgf/ropeMas_disturbanceRejection_5.dat};
					\addplot [color=ghostColor, mark size=sqrt(0.80)*\loadSize, opacity=1] table[x=agent4.v, y=agent4.s]{pic/pgf/ropeMas_disturbanceRejection_5.dat};
					\addplot [color=colA1, mark size=sqrt(0.25)*\loadSize] table[x=agent1.v, y=agent1.s]{pic/pgf/ropeMas_disturbanceRejection_1.dat};
					\addplot [color=colA2, mark size=sqrt(0.30)*\loadSize] table[x=agent2.v, y=agent2.s]{pic/pgf/ropeMas_disturbanceRejection_1.dat};
					\addplot [color=colA3, mark size=sqrt(1.40)*\loadSize] table[x=agent3.v, y=agent3.s]{pic/pgf/ropeMas_disturbanceRejection_1.dat};
					\addplot [color=colA4, mark size=sqrt(0.80)*\loadSize] table[x=agent4.v, y=agent4.s]{pic/pgf/ropeMas_disturbanceRejection_1.dat};
				\end{axis}
			\end{tikzpicture}%
			&
			\begin{tikzpicture}%
				\begin{axis}[snapshotOpts, title={$t = 13.5 \earlySnap, t=14 \lateSnapD$}]
					\addplot [color=ghostColor, mark size=sqrt(0.25)*\loadSize, opacity=1] table[x=agent1.v, y=agent1.s]{pic/pgf/ropeMas_disturbanceRejection_6.dat};
					\addplot [color=ghostColor, mark size=sqrt(0.30)*\loadSize, opacity=1] table[x=agent2.v, y=agent2.s]{pic/pgf/ropeMas_disturbanceRejection_6.dat};
					\addplot [color=ghostColor, mark size=sqrt(1.40)*\loadSize, opacity=1] table[x=agent3.v, y=agent3.s]{pic/pgf/ropeMas_disturbanceRejection_6.dat};
					\addplot [color=ghostColor, mark size=sqrt(0.80)*\loadSize, opacity=1] table[x=agent4.v, y=agent4.s]{pic/pgf/ropeMas_disturbanceRejection_6.dat};
					\addplot [color=colA1, mark size=sqrt(0.25)*\loadSize] table[x=agent1.v, y=agent1.s]{pic/pgf/ropeMas_disturbanceRejection_2.dat};
					\addplot [color=colA2, mark size=sqrt(0.30)*\loadSize] table[x=agent2.v, y=agent2.s]{pic/pgf/ropeMas_disturbanceRejection_2.dat};
					\addplot [color=colA3, mark size=sqrt(1.40)*\loadSize] table[x=agent3.v, y=agent3.s]{pic/pgf/ropeMas_disturbanceRejection_2.dat};
					\addplot [color=colA4, mark size=sqrt(0.80)*\loadSize] table[x=agent4.v, y=agent4.s]{pic/pgf/ropeMas_disturbanceRejection_2.dat};
					\myDelta{0}{2}
				\end{axis}
			\end{tikzpicture}%
			\\
			\begin{tikzpicture}%
				\begin{axis}[snapshotOpts, title={$t = 16.2 \earlySnap, t=16.7 \lateSnapD$}, xlabel={$v^{i \gamma}(s, t)$}, ylabel={$s$}]
					\addplot [color=ghostColor, mark size=sqrt(0.25)*\loadSize, opacity=1] table[x=agent1.v, y=agent1.s]{pic/pgf/ropeMas_disturbanceRejection_7.dat};
					\addplot [color=ghostColor, mark size=sqrt(0.30)*\loadSize, opacity=1] table[x=agent2.v, y=agent2.s]{pic/pgf/ropeMas_disturbanceRejection_7.dat};
					\addplot [color=ghostColor, mark size=sqrt(1.40)*\loadSize, opacity=1] table[x=agent3.v, y=agent3.s]{pic/pgf/ropeMas_disturbanceRejection_7.dat};
					\addplot [color=ghostColor, mark size=sqrt(0.80)*\loadSize, opacity=1] table[x=agent4.v, y=agent4.s]{pic/pgf/ropeMas_disturbanceRejection_7.dat};
					\addplot [color=colA1, mark size=sqrt(0.25)*\loadSize] table[x=agent1.v, y=agent1.s]{pic/pgf/ropeMas_disturbanceRejection_3.dat};
					\addplot [color=colA2, mark size=sqrt(0.30)*\loadSize] table[x=agent2.v, y=agent2.s]{pic/pgf/ropeMas_disturbanceRejection_3.dat};
					\addplot [color=colA3, mark size=sqrt(1.40)*\loadSize] table[x=agent3.v, y=agent3.s]{pic/pgf/ropeMas_disturbanceRejection_3.dat};
					\addplot [color=colA4, mark size=sqrt(0.80)*\loadSize] table[x=agent4.v, y=agent4.s]{pic/pgf/ropeMas_disturbanceRejection_3.dat};
				\end{axis}
			\end{tikzpicture}%
			&
			\begin{tikzpicture}%
				\begin{axis}[snapshotOpts, title={$t = 29.5 \earlySnap, t=30 \lateSnapD$}, xlabel={$v^{i \gamma}(s, t)$}]
					\addplot [color=ghostColor, mark size=sqrt(0.25)*\loadSize, opacity=1] table[x=agent1.v, y=agent1.s]{pic/pgf/ropeMas_disturbanceRejection_8.dat};
					\addplot [color=ghostColor, mark size=sqrt(0.30)*\loadSize, opacity=1] table[x=agent2.v, y=agent2.s]{pic/pgf/ropeMas_disturbanceRejection_8.dat};
					\addplot [color=ghostColor, mark size=sqrt(1.40)*\loadSize, opacity=1] table[x=agent3.v, y=agent3.s]{pic/pgf/ropeMas_disturbanceRejection_8.dat};
					\addplot [color=ghostColor, mark size=sqrt(0.80)*\loadSize, opacity=1] table[x=agent4.v, y=agent4.s]{pic/pgf/ropeMas_disturbanceRejection_8.dat};
					\addplot [color=colA1, mark size=sqrt(0.25)*\loadSize] table[x=agent1.v, y=agent1.s]{pic/pgf/ropeMas_disturbanceRejection_4.dat};
					\addplot [color=colA2, mark size=sqrt(0.30)*\loadSize] table[x=agent2.v, y=agent2.s]{pic/pgf/ropeMas_disturbanceRejection_4.dat};
					\addplot [color=colA3, mark size=sqrt(1.40)*\loadSize] table[x=agent3.v, y=agent3.s]{pic/pgf/ropeMas_disturbanceRejection_4.dat};
					\addplot [color=colA4, mark size=sqrt(0.80)*\loadSize] table[x=agent4.v, y=agent4.s]{pic/pgf/ropeMas_disturbanceRejection_4.dat};
					\myDelta{0}{2}
				\end{axis}
			\end{tikzpicture}%
		\end{tabular}
		}%
		\caption{\unboldmath Closed-loop disturbance behavior of the platoon for the disturbances $d^{1 2}(t) = -8 \sigma(t)$ and $d^{2 1}(t) = -5 \sigma(t - 15)$ acting on the agents $\nu^{1 2}$ and $\nu^{2 1}$.
			\textbf{Top:} The positions $w^{i \gamma}_{1}(t)$ of the loads are plotted.
			\textbf{Bottom:} Snapshots of the  rope positions $v^{1 1}(s, t)$ \eqref{pgf:a1}, $v^{1 2}(s, t)$ \eqref{pgf:a2}, $v^{2 1}(s, t)$ \eqref{pgf:a3}, $v^{2 2}(s, t)$ \eqref{pgf:a4} at $t\in\{1.25, 1.75\}$, $t\in\{13.5, 14\}$, $t\in\{16.2, 16.7\}$, $t\in\{29.5, 30\}$.}
		\label{pic:disturbanceRejection.rope}%
\end{figure}
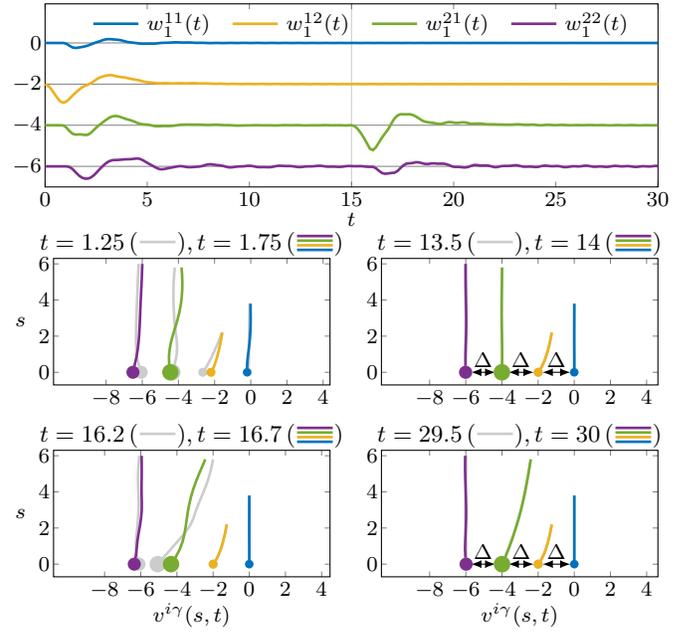}%
The wave dynamics of the uncertain followers are approximated in space with $21$ grid points by using differentiation and integration matrices, which are provided by the Chebfun toolbox (see \cite{ChebfunGuide14,Trefethen2000spectral}).
For ease of implementation, the latter is done after normalizing the spatial domain of all ropes to $1$.
To simulate the resulting lumped system in time, the Matlab solver \texttt{lsim} is used with the step size $10^{-2}$.
The ICs of the agents and the networked controller are such that the followers stand still in the formation with $v^{i \gamma}(s, 0) = \delta^{i \gamma}$, $\dot{v}^{i \gamma}(s, 0) = 0$, $i = 1, 2$, $\gamma = 1, 2$.

The reference input $r(t) = 1.5 t$ in the interval $t\in [0, 10)$ results in a fast movement, while the flatter ramp $r(t) = 10 + 0.5 t$, $t \in [10, 20)$, initiates a slow down, so that the new setpoint $r(t) = 20$, $t \geq 20$, is reached without a large overshoot.
The resulting tracking behavior is shown in Fig.~\ref{pic:referenceTracking.rope}.
After an acceleration phase (see the snapshot at $t=1.5$), the agents move with constant speed and the specified distances $\Delta = 2$ (see $t=10$).
As the leader changes the desired velocity to zero at $t=20$, overshooting occurs so that at $t=21$ the followers decelerate and then asymptotically approach the final formation at $t=30$.
Obviously, the networked controller ensures that the informed agent $\nu^{1 1}$ and the uninformed agents $\nu^{1 2}, \nu^{2 1}, \nu^{2 2}$ track the output $r(t)$ of the leader $\nu^{0}$.

The rejection of the disturbances $d^{1 2}(t) = -8$ and $d^{2 1}(t) = 0$, $t \in [0, 15)$, $d^{2 1}(t) = -5$, $t \geq 15$, is illustrated in Fig.~\ref{pic:disturbanceRejection.rope}.
Although these disturbances only act on the agents $\nu^{1 2}$ and $\nu^{2 1}$, all agents are influenced by $d^{1 2}(t)$ and $d^{2 1}(t)$ due to the network communication so the disturbance behavior is cooperative.
The bending of the ropes in the steady state (see Fig.~\ref{pic:disturbanceRejection.rope} for $t=30$) shows the compensation of the disturbances ensuring a vanishing tracking error for the loads.

The simulations demonstrate that robust cooperative output regulation for a heterogeneous MAS consisting of distributed-parameter agents is achieved in the presence of model uncertainties.
Thereby, the presented networked controller allows the formation control of multiple heavy ropes.
This result directly extends to the cooperative transportation using two ropes for each load giving rise to multi-input systems (see \cite{Irscheid2019flatness2}).
The corresponding networked controller can then also be designed by making use of the results of this paper.
%
%
%
\section{Concluding Remarks}{\acceptancenotice\copyrightnotice}%
Similar to \cite{Deu22robustMas}, the results of this paper are also directly applicable for the \textit{leaderless} output synchronization problem.
In order to achieve a cooperative output feedback controller, a state observer can be used.
For this, there are results available in the literature, see \cite{Deu21waveOde,DeuGehKe19,RedaudAuriol2021,Demetriou2004}.
However, an open research topic is the observer design using solely relative measurements.
If the output to be controlled is not available for measurement, then an extension of the cooperative reference observer can be considered.
This requires the generalization of the observer-based feed-forward regulation in \cite{Deu21parabMasOr, su2012cooperative} to hyperbolic PIDE--ODE agents.
However, the resulting tracking is then no longer robust w.r.t. model uncertainty.
Another promising research topic is the combination of the presented results with the delay-adaptive methods in \cite{ZhuKrstic2020delay} in order to achieve robust cooperative output regulation for ODE agents with unknown delays.
Future work could investigate not only the design of the controller but also of the network topology to improve the synchronization dynamics.
Thereby, in particular the choice which agents serves as a leader is of interest.
%
%
%
\appendices
\section{}
\subsubsection{Proof of Lemma \ref{lem:decouplingEquations.cooperative}}\label{sec:appendix.decouplingEquations}{\acceptancenotice\copyrightnotice}%
By recalling $\tilde{S} = I_{\bar{n}_{-}} \otimes S$, $\tilde{B}_{y} = I_{\bar{n}_{-}} \otimes b_{y}$ and using $\Pi_{x,j}^{i}(z) = (e_{j}^{\top} \Otimes I_{n_v}) \Pi_{x}^{i}(z)$ and $\Pi_{w,j}^{i} = (e_{j}^{\top} \Otimes I_{n_v}) \Pi_{w}^{i}$, the decoupling equations \eqref{eq:decouplingEquations.cooperative} are split into
\begin{subequations}\label{eq:decouplingEquations.cooperative.split}
	\begin{align}
		S \Pi_{w,j}^{i} - \Pi_{w,j}^{i} \tilde{F}_{w}^{i} &= - b_{y} e_{j}^\top \tilde{C}_{w}^{i}\label{eq:decouplingEquations.cooperative.split.sylvester}\\
		(\Pi_{x,j}^{i}(z) \Lambda^{i}(z))' + S \Pi_{x,j}^{i}(z) &= -b_{y} e_{j}^\top \tilde{C}_{x}^{i}(z)\label{eq:decouplingEquations.cooperative.split.ode}\\
		\Pi_{x,j}^{i}(0) \Lambda^{i}(0) (E_{+}^{i} Q_{0}^{i} \!+\! E_{-}^{i}) &- \int_{0}^{1} \Pi_{x,j}^{i}(\zeta) \tilde{A}_{0}^{i}(\zeta) \d\zeta\\
		= \Pi_{w,j}^{i} B_{w}^{i} &- b_{y} e_{j}^\top C_{x,0}^{i} (E_{+}^{i} Q_{0}^{i} \!+\! E_{-}^{i})\nonumber\\
		\Pi_{x,j}^{i}(1) \Lambda^{i}(1) E_{+}^{i} &= b_{y} e_{j}^\top C_{x,1}^{i} E_{+}^{i}
	\end{align}
\end{subequations}
with $j=1, \ldots, \bar{n}_{-}$.
Next, the generalized left-eigenvectors $\nu_{k l}^{\top}$ of $S$ related to the eigenvalues $\mu_{k}$, $k=1, \ldots, \bar{k}$, with the dimension $l=1, \ldots, \bar{l}_{k}$ of the corresponding Jordan blocks and
$\nu_{k l}^{\top} S = \nu_{k l}^{\top} \mu_{k} + \nu_{k (l-1)}^\top$,
$\nu_{k 0}^\top = 0$
are utilized.
For brevity, $\pi_{l}^{j \top}\!(z) = \nu_{k l}^{\top} \Pi_{x,j}^{i}(z)$, $\pi_{w,l}^{j \top} = \nu_{k l}^{\top} \Pi_{w,j}^{i}$ are introduced without indices that are irrelevant in this context.
Then, multiplying \eqref{eq:decouplingEquations.cooperative.split} by $\nu_{k l}^{\top}$ from the left side and rearranging \eqref{eq:decouplingEquations.cooperative.split.sylvester}, \eqref{eq:decouplingEquations.cooperative.split.ode} leads to
\begin{subequations}\label{eq:decouplingEquations.cooperative.modal}
	\begin{align}
		&\pi_{w,l}^{j \top} = - (\nu_{k l}^{\top} b_{y} e_{j}^\top \tilde{C}_{w}^{i} + \pi_{w,l-1}^{j \top}) (\mu_{k} I - \tilde{F}_{w}^{i})^{-1} \label{eq:decouplingEquations.cooperative.modal.sylvester}\\
		&\pi_{l}^{\prime \top}\!(z) = \pi_{l}^{j \top}\!(z) M_{0}(z) + c_{0}^{j \top}\!(z) \label{eq:decouplingEquations.cooperative.modal.ode}\\
		&\pi_{l}^{j \top}\!(0) \Lambda^{i}(0) (E_{+}^{i} Q_{0}^{i} \!+\! E_{-}^{i}) \!-\! \int_{0}^{1} \! \pi_{l}^{j \top}\!(\zeta) \tilde{A}_{0}^{i}(\zeta) \d\zeta \!=\! c_{1}^{j \top} \! \label{eq:decouplingEquations.cooperative.modal.bc0}\\
		&\pi_{l}^{j \top}\!(1) \Lambda^{i}(1) E_{+}^{i} = \nu_{k l}^{\top} b_{y} e_{j}^{\top} C_{x,1}^{i}  E_{+}^{i} \label{eq:decouplingEquations.cooperative.modal.bc1}
	\end{align}
\end{subequations}
with $M_{0}(z) = - (\Lambda^{i\prime}(z) + \mu_{k} I) (\Lambda^{i}(z))^{-1}$,
$c_{0}^{j \top}\!(z) = - (\nu_{k l}^{\top} b_{y} e_{j}^\top \tilde{C}_{x}^{i}(z) + \pi_{l-1}^{j \top}(z)) (\Lambda^{i}(z))^{-1}$ and $c_{1}^{j \top} = \pi_{w,l}^{j \top} B_{w}^{i} - \nu_{k l}^{\top} b_{y} e_{j}^\top C_{x,0}^{i} (E_{+}^{i} Q^{i}_{0} \!+\! E_{-}^{i})$.
Note that $\mu_{k} I - \tilde{F}_{w}^{i}$ in \eqref{eq:decouplingEquations.cooperative.modal.sylvester} is a nonsingular matrix because $\tilde{F}_{w}^{i}$ is Hurwitz and $\mu_{k} \notin \sigma(\tilde{F}_{w}^{i})$.
The general solution of the ODE \eqref{eq:decouplingEquations.cooperative.modal.ode} is
$\pi_{l}^{j \top}\!(z) = \pi_{l}^{j \top}\!(1) \Psi_{k}^{i}(z, 1) + \int_{1}^{z} c_{0}^{j \top}\!(\zeta) \Psi_{k}^{i}(z, \zeta) \d\zeta$
with the \textit{fundamental matrix}
$\Psi_{k}^{i}(z, \zeta) = \e^{\int_{\zeta}^{z} M_{0}(\eta) \d\eta} = \Lambda^{i}(\zeta) (\Lambda^{i}(z))^{-1} \e^{-\mu_{k} \! \int_{\zeta}^{z} (\Lambda^{i}(\eta))^{-1} \d\eta}$
since $M_{0}(z)$ is diagonal.
From \eqref{eq:decouplingEquations.cooperative.modal.bc1} follows $\pi_{l}^{j \top}\!(1) E_{+}^{i} = \nu_{k l}^{\top} b_{y} e_{j}^{\top} C_{x,1}^{i} (\Lambda^{i}(1))^{-1} E_{+}^{i}$ and thus $\pi_{l}^{j \top}\!(1) = \pi_{l}^{j \top}\!(1) E_{-}^{i} E_{-}^{i \top} + \nu_{k l}^{\top} b_{y} e_{j}^{\top} C_{x,1}^{i} (\Lambda^{i}(1))^{-1} E_{+}^{i} E_{+}^{i \top}$.
Inserting this into the solution of the ODE \eqref{eq:decouplingEquations.cooperative.modal.ode} gives 
\begin{align}\label{eq:decouplingEquations.cooperative.modal.solution.pi1}
	\pi_{l}^{j \top}\!(z) &= \pi_{l}^{j \top}\!(1) E_{-}^{i} E_{-}^{i \top} \Psi_{k}^{i}(z, 1) + c_{2}^{j \top}\!(z)
\end{align}
with $c_{2}^{j \top}\!(z) = \nu_{k l}^{\top} b_{y} e_{j}^{\top} C_{x,1}^{i} (\Lambda^{i}(1))^{-1} E_{+}^{i} E_{+}^{i \top} \Psi_{k}^{i}(z, 1) + \int_{1}^{z} c_{0}^{j \top}\!(\zeta) \Psi_{k}^{i}(z, \zeta) \d\zeta$.
Making use of \eqref{eq:decouplingEquations.cooperative.modal.solution.pi1} in \eqref{eq:decouplingEquations.cooperative.modal.bc0} yields
$\pi_{l}^{j \top}\!(1) E_{-}^{i} M_{1} = c_{3}^{j \top}$
with
$M_{1} = E_{-}^{i \top} \! \big(\Psi_{k}^{i}(0, 1) \Lambda^{i}(0) E_{-}^{i} \!-\! \int_{0}^{1} \! \Psi_{k}^{i}(\zeta, 1) \tilde{A}_{0}^{i}(\zeta) \d\zeta \big)$,
$c_{3}^{j \top} = c_{1}^{j \top} \!-\! c_{2}^{j \top}\!(0) \Lambda^{i}(0) (E_{+}^{i} Q^{i}_{0}\!+\! E_{-}^{i}) \!+\! \int_{0}^{1}\! c_{2}^{j \top}\!(\zeta) \tilde{A}_{0}^{i}(\zeta) \d\zeta$.
Note that $M_{1}$ is nonsingular and lower triangular because $\Psi_{k}^{i}(z, \zeta)$ and $\Lambda^{i}(z)$ are diagonal as well as nonsingular while $E_{-}^{i \top} \Psi_{k}^{i}(\zeta, 1) \tilde{A}_{0}^{i}(\zeta)$ is strictly lower triangular due to the same property of $\tilde{A}_{0-}^{i}(z)$ (see Section \ref{sec:localStabilization.backstepping}).
Hence, $\pi_{l}^{j \top}\!(1) E_{-}^{i} = c_{3}^{j \top} M_{1}^{-1}$ holds, which leads with \eqref{eq:decouplingEquations.cooperative.modal.solution.pi1} to
$\pi_{l}^{j \top}\!(z) = c_{3}^{j \top} M_{1}^{-1} E_{-}^{i \top} \Psi_{k}^{i}(z, 1) + c_{2}^{j \top}\!(z)$. 
Then, $\Pi_{x,j}^{i}$ and $\Pi_{w,j}^{i}$ follow from the latter and \eqref{eq:decouplingEquations.cooperative.modal.sylvester} by using the inverse of the transformation matrix in the corresponding Jordan canonical form resulting from $\nu_{k l}$.
Composing $\Pi_{x}^{i}(z) = \col(\Pi_{x,1}^{i}(z), \ldots, \Pi_{x,\bar{n}_{-}}(z))$ and $\Pi_{w}^{i} = \col(\Pi_{w,1}^{i}, \ldots, \Pi_{w,\bar{n}_{-}}^{i})$ yields the result.
The elements of $\Pi_{x}^{i}$ are piecewise $C^{1}$ because the integrand of $c_{2}^{j \top}$ can contain $\delta$-functions, which are caused by a pointwise in-domain defined output. Hence, the boundedly invertible transformation \eqref{eq:decoupling.extendedRegulatorEquations} exists.
\proofEnd

\subsubsection{Proof of Lemma \ref{lem:controllability}}\label{sec:appendix.controllability}{\acceptancenotice\copyrightnotice}
First, the transfer matrix from $\bm{u}$ to $\bm{y}$ of \eqref{eq:agent.local.nominal} and its numerator $N^{i}(s)$ are derived. Similar to Section \ref{sec:controller.design.localStabilization}, the local decoupling and backstepping transformations \eqref{eq:transformation.local.decoupling}, \eqref{eq:transformation.backstepping} are applied, but the terms appearing at the actuated boundary condition are not eliminated by state feedback. Then \eqref{eq:agent.local.backstepping} is obtained, but with
$\tilde{\bm{\varepsilon}}_{-}(1, t) = \bm{u}(t) - (E_{-}^{i \top} - Q_{1}^{i} E_{+}^{i \top}) \Sigma^{i}(1) \bm{w}(t) + Q_{1}^{i} E_{+}^{i \top} \mathcal{T}_{i}^{-1}[\tilde{\bm{\varepsilon}}(t)](1) - \int_{0}^{1} E_{+}^{i \top} K^{i}(1, \zeta) \mathcal{T}_{i}^{-1}[\tilde{\bm{\varepsilon}}(t)](\zeta) \d\zeta$
instead of \eqref{eq:agent.local.backstepping.bc1}.
By making use of the solution $\tilde{\bm{\varepsilon}}(z, t) = \check{\tilde{\bm{\varepsilon}}}(z) \e^{s t}$, $\bm{w}(t) = \check{\bm{w}} \e^{s t}$, $\bm{y}(t) = \check{\bm{y}} \e^{s t}$ for $\bm{u}(t) = \check{\bm{u}} \e^{s t}$, $s \in \mathbb{C}$, $t \geq 0$ (see \cite{Zw04}) one obtains
\begin{subequations}\label{eq:agent.local.nofeedback.laplace}
	\begin{align}
		\d_z \check{\tilde{\bm{\varepsilon}}}(z) &= (\Lambda^{i}(z))^{-1} \big(s \check{\tilde{\bm{\varepsilon}}}(z) - \tilde{A}_{0}^{i}(z) \check{\tilde{\bm{\varepsilon}}}_{-}(0) \big) \label{eq:agent.local.nofeedback.laplace.pde}\\
		\check{\tilde{\bm{\varepsilon}}}_{+}(0) &= Q_{0}^{i} \check{\tilde{\bm{\varepsilon}}}_{-}(0)\label{eq:agent.local.nofeedback.laplace.bc0}\\
		\check{\tilde{\bm{\varepsilon}}}_{-}(1) &= \check{\bm{u}} - ( E_{-}^{i \top} \! - \! Q_{1}^{i} E_{+}^{i \top} )\Sigma^{i}(1) \check{\bm{w}} + Q_{1}^{i} E_{+}^{i \top} \mathcal{T}_{i}^{-1}[\check{\tilde{\bm{\varepsilon}}}](1)\nonumber\\
		&\quad - \int_{0}^{1} E_{+}^{i \top} K^{i}(1, \zeta) \mathcal{T}_{i}^{-1}[\check{\tilde{\bm{\varepsilon}}}](\zeta) \d\zeta \label{eq:agent.local.nofeedback.laplace.bc1}\\
		\check{\bm{w}} &= (s I - \tilde{F}_{w}^{i})^{-1} B_{w}^{i} \check{\tilde{\bm{\varepsilon}}}_{-}(0)\label{eq:agent.local.nofeedback.laplace.ode}\\
		\check{\bm{y}} &= \tilde{\mathcal{C}}^{i}_{x} [\check{\tilde{\bm{\varepsilon}}}] + \tilde{C}_{w}^{i} \check{\bm{w}} .\label{eq:agent.local.nofeedback.laplace.output}
	\end{align}
\end{subequations}
The \textit{fundamental matrix} $\Phi^{i}$ related to \eqref{eq:agent.local.nofeedback.laplace.pde} is given in Lemma \ref{lem:controllability}, thus 
$\check{\tilde{\bm{\varepsilon}}}(z) = \Phi^{i}(z, 0, s) \check{\tilde{\bm{\varepsilon}}}(0) - \int_{0}^{z} \Phi^{i}(z, \zeta, s) (\Lambda^{i}(\zeta))^{-1} \tilde{A}_{0}^{i}(\zeta)\d\zeta \check{\tilde{\bm{\varepsilon}}}_{-}(0)$
holds.
This leads with \eqref{eq:agent.local.nofeedback.laplace.bc0} and $M^{i}(z, s)$ given in Lemma \ref{lem:controllability} to
$\check{\tilde{\bm{\varepsilon}}}(z) = M^{i}(z, s) \check{\tilde{\bm{\varepsilon}}}_{-}(0)$.
Inserting the latter and \eqref{eq:agent.local.nofeedback.laplace.ode} into \eqref{eq:agent.local.nofeedback.laplace.bc1} gives
$\check{\tilde{\bm{\varepsilon}}}(z) = M^{i}(z, s) (D^{i}(s))^{-1} \check{\bm{u}}$
with $D^{i}(s) \in \mathbb{C}^{n_{-}^{i} \times n_{-}^{i}}$.
Moreover, plugging $\check{\tilde{\bm{\varepsilon}}}(z) = M^{i}(z, s) \check{\tilde{\bm{\varepsilon}}}_{-}(0)$ into \eqref{eq:agent.local.nofeedback.laplace.ode} with $E_{-}^{i \top} M(0, s) = I$ as well as expressing the matrix inverse by means of the adjugate and the determinant gives
$\check{\bm{w}} = \text{adj}(s I - \tilde{F}_{w}^{i}) (\det(s I - \tilde{F}_{w}^{i}))^{-1} B_{w}^{i} \check{\tilde{\bm{\varepsilon}}}_{-}(0)$.
Inserting the latter and $\check{\tilde{\bm{\varepsilon}}}(z) = M^{i}(z, s) (D^{i}(s))^{-1} \check{\bm{u}}$ into \eqref{eq:agent.local.nofeedback.laplace.output} as well as factoring out $(\det(s I \!-\! \tilde{F}_{w}^{i}))^{-1}$ yields the numerator $N^{i}(s)$ described by \eqref{eq:agent.numerator}.

Next, the controllability of $(\tilde{S}, B_{e}^{i})$ is investigated.
Due to $\tilde{S} = I_{\bar{n}_{-}} \otimes S$, the left-eigenvectors $\bar{\nu}_{k m}^{\top}$, $m = 1, \ldots, \bar{n}_{-}$, of $\tilde{S}$, which are related to the eigenvalues $\mu_{k}$, can be expressed by utilizing the left-eigenvectors $\nu_{k 1}^{\top}$ of $S$ (see Appendix \ref{sec:appendix.decouplingEquations}) as $\bar{\nu}_{k m}^{\top} = \alpha_{k m}^{\top} \otimes \nu_{k 1}^\top$ with $\alpha_{k m}^{\top} = [\alpha_{k m}^{j}] \in \mathbb{R}^{1 \times \bar{n}_{-}}$, $\alpha_{k m}^{\top} \neq 0^{\top}$, $k = 1, \ldots, \bar{k}$.
According to the PBH eigenvector test (see e.g. \cite[Th. 6.2-5]{KL80}) is $(\tilde{S}, B_{e}^{i})$ controllable if $\bar{\nu}_{k m}^{\top} B_{e}^{i} \neq 0$ for all $\bar{\nu}_{k m}^{\top}$.
By making use of the definition of $B_{e}^{i}$ in Section \ref{sec:controller.simultaneousStabilization.decoupling} and recalling $\Pi_{x}^{i}(z) = \col(\Pi_{x,1}^{i}(z), \ldots, \Pi_{x,\bar{n}_{-}}^{i}(z))$, $\pi_{l}^{j \top}\!(z) = \nu_{k l}^{\top} \Pi_{x,j}^{i}(z)$ (see Appendix \ref{sec:appendix.decouplingEquations}) follows
\begin{align}\label{eq:controllability.nub}
	\bar{\nu}_{k m}^{\top} B_{e}^{i} = \sum_{j=1}^{\bar{n}_{-}} \alpha_{k m}^{j} \big( \pi_{1}^{j \top}\!(1) E_{-}^{i} E_{-}^{i \top} \Lambda^{i}(1) E_{-}^{i} \!-\! \nu_{k 1}^{\top} b_{y} C_{x,1}^{i} E_{-}^{i} \big) .
\end{align}
In view of $\pi_{l}^{j \top}\!(1) E_{-}^{i} M_{1} = c_{3}^{j \top}$ (see Appendix \ref{sec:appendix.controllability}) holds $\pi_{l}^{j \top}\!(1) E_{-}^{i} = c_{3}^{j \top} M_{1}^{-1}$.
Note that the relation
$\Psi_{k}^{i}(z, \zeta) = \Lambda^{i}(\zeta) (\Lambda^{i}(z))^{-1} \Phi^{i}(\zeta, z, \mu_{k})$
of the fundamental matrices $\Psi_{k}^{i}$ and $\Phi^{i}$, which are defined in Appendix \ref{sec:appendix.decouplingEquations} and Lemma \ref{lem:controllability}, holds.
With the latter, \eqref{eq:decouplingEquations.cooperative.modal.sylvester}, $l=1$ and the definitions of $c_{3}^{j \top}$, $c_{2}^{j \top}\!(z)$, $c_{1}^{j \top}$, $c_{0}^{j \top}\!(z)$ (see Appendix \ref{sec:appendix.decouplingEquations}) follows
$c_{2}^{j \top}\!(z) = \nu_{k 1}^{\top} b_{y} e_{j}^{\top} \big( C_{x,1}^{i} E_{+}^{i} E_{+}^{i \top} (\Lambda^{i}(z))^{-1} \Phi^{i}(1, z, \mu_{k}) - \int_{1}^{z} \tilde{C}_{x}^{i}(\zeta) (\Lambda^{i}(z))^{-1} \Phi^{i}(\zeta, z, \mu_{k}) \d\zeta \big)$,
$c_{3}^{j \top} \!=\! - \nu_{k 1}^{\top} b_{y} e_{j}^\top \! \big(\tilde{C}_{w}^{i} (\mu_{k} I \!-\! \tilde{F}_{w}^{i})^{-1} B_{w}^{i} + C_{x,0}^{i} (E_{+}^{i} Q^{i}_{0} \!+\! E_{-}^{i}) \big) - c_{2}^{j \top}\!(0) \Lambda^{i}(0) (E_{+}^{i} Q^{i}_{0}\!+\! E_{-}^{i}) + \int_{0}^{1}\! c_{2}^{j \top}\!(\zeta) \tilde{A}_{0}^{i}(\zeta) \d\zeta$.
On the other hand, applying the formal output operator \eqref{eq:agent.backstepping.output.operator} to $M^{i}(z, s)$ (see Lemma \ref{lem:controllability}), using
$C_{x,1}^{i} I M^{i}(1, \mu_{k})= C_{x,1}^{i} (E_{-}^{i} E_{-}^{i \top} \!+\! E_{+}^{i} E_{+}^{i \top}) M^{i}(1, \mu_{k})$
and changing the order of integration leads to
\begin{align}
	&\tilde{\mathcal{C}}^{i}_{x}[M(\mu_{k})] =
	C_{x,0}^{i} (E^{i}_{-} \!+\! E^{i}_{+} Q_{0}^{i}) 
	+ C_{x,1}^{i} E_{-}^{i} E_{-}^{i \top} M^{i}(1, \mu_{k})
	\nonumber\\&~
	+ C_{x,1}^{i} E_{+}^{i} E_{+}^{i \top} \Phi^{i}(1, 0, \mu_{k}) (E^{i}_{-} \!+\! E^{i}_{+} Q_{0}^{i}) 
	\nonumber\\&~
	- \int_{0}^{1} \Big(C_{x,1}^{i} E_{+}^{i} E_{+}^{i \top} \Phi^{i}(1, \zeta, \mu_{k}) (\Lambda^{i}(\zeta))^{-1} \tilde{A}_{0}^{i}(\zeta)
	\nonumber\\&\qquad
	- \tilde{C}_{x}^{i}(\zeta) \Phi^{i}(\zeta, 0, \mu_{k}) (E^{i}_{-} \!+\! E^{i}_{+} Q_{0}^{i})
	\nonumber\\&\qquad
	+ \int_{\zeta}^{1} \tilde{C}_{x}^{i}(\bar{\zeta}) \Phi^{i}(\bar{\zeta}, \zeta, \mu_{k}) (\Lambda^{i}(\zeta))^{-1} \tilde{A}_{0}^{i}(\zeta) \d\bar{\zeta} \Big) \d\zeta . \label{eq:controllability.CopM}
\end{align}
By plugging the previously derived expression for $c_{2}^{j \top}$ into the one of $c_{3}^{j \top}$, exploiting
$\Phi^{i}(\zeta, z, \mu_{k}) (\Lambda^{i}(z))^{-1} = (\Lambda^{i}(z))^{-1} \Phi^{i}(\zeta, z, \mu_{k})$,
which holds as both matrices are diagonal,
and using \eqref{eq:controllability.CopM} one obtains
$c_{3}^{j \top} = - \nu_{k 1}^{\top} b_{y} e_{j}^\top \big( \tilde{C}_{w}^{i} (\mu_{k} I - \tilde{F}_{w}^{i})^{-1} B_{w}^{i} + \tilde{\mathcal{C}}^{i}_{x}[M(\mu_{k})]  \big)
+ \nu_{k 1}^{\top} b_{y} e_{j}^\top C_{x,1}^{i} E_{-}^{i} E_{-}^{i \top} M^{i}(1, \mu_{k})$.
Moreover, one finds
$E_{-}^{i \top} M^{i}(1, \mu_{k}) = (E_{-}^{i \top} \Lambda^{i}(1) E_{-}^{i})^{-1} M_{1}$
in view of  $M^{i}(z, s)$ (see Lemma \ref{lem:controllability}), the definition of $M_{1}$ in Appendix \ref{sec:appendix.decouplingEquations} and
$\Psi_{k}^{i}(z, \zeta) = \Lambda^{i}(\zeta) (\Lambda^{i}(z))^{-1} \Phi^{i}(\zeta, z, \mu_{k})$.
Thus, from \eqref{eq:controllability.nub} follows with the previous preparations
$\bar{\nu}_{k m}^{\top} B_{e}^{i} = - \nu_{k 1}^{\top} b_{y} \sum_{j=1}^{\bar{n}_{-}} \alpha_{k m}^{j} e_{j}^\top \big( \tilde{C}_{w}^{i} (\mu_{k} I - \tilde{F}_{w}^{i})^{-1} B_{w}^{i} 
+ \tilde{\mathcal{C}}^{i}_{x}[M(\mu_{k})]  \big) M_{1}^{-1} E_{-}^{i \top} \Lambda^{i}(1) E_{-}^{i}.$
By considering $\bar{\nu}_{km}^{\top} \tilde{B}_{y} = (\alpha_{k m}^{\top} \otimes \nu_{k 1}^{\top}) (I_{\bar{n}_{-}} \otimes b_{y}) = \nu_{k 1}^{\top} b_{y} \sum_{j=1}^{\bar{n}_{-}} \alpha_{k m}^{j} e_{j}^\top$ and $N^{i}(\mu_{k})$ according to \eqref{eq:agent.numerator} follows
\begin{align}\label{eq:controllability.nub.3}
	\bar{\nu}_{k m}^{\top} B_{e}^{i} &= - \bar{\nu}_{k m}^{\top} \tilde{B}_{y} N^{i}(\mu_{k}) M_{1}^{-1} E_{-}^{i \top} \Lambda^{i}(1) E_{-}^{i}.
\end{align}
Note that $\bar{\nu}_{k m}^{\top} \tilde{B}_{y} \neq 0$ holds due to the PBH eigenvector test because $(S, b_{y})$ is controllable by assumption and thus $(\tilde{S}, \tilde{B}_{y})$ is controllable as $\tilde{S} = I_{\bar{n}_{-}}\otimes S$, $\tilde{B}_{y} = I_{\bar{n}_{-}}\otimes b_{y}$.
Moreover, $E_{-}^{i \top} \Lambda^{i}(1) E_{-}^{i}$ and $M_{1}^{-1}$ are nonsingular (see Appendix \ref{sec:appendix.decouplingEquations}).
Hence, the right-hand side of \eqref{eq:controllability.nub.3} is nonzero if the numerator fulfills $\text{rank}\,N^{i}(\mu_{k}) = \bar{n}_{-}$ for all $\mu_{k} \in \sigma(S)$.
Thus, it follows in this case that $\bar{\nu}_{k m}^{\top} B_{e}^{i} \neq 0$ and due to the PBH eigenvector test the controllability of $(\tilde{S}, B_{e}^{i})$ is ensured.
\proofEnd

\subsubsection{Parameters in Section \ref{sec:robustCooperativeOutputRegulation}}\label{sec:appendix.parameters}{\acceptancenotice\copyrightnotice}%
\textbf{System~(\ref{eq:agent.transformed}): }
$\tilde{\bm{G}}(z) = \bar{\bm{\mathcal{T}}}[\bm{G} - \bar{\bm{\Sigma}} \bm{G}_{w}](z) + \bar{\bm{K}}(z, 0) \bar{\bm{\Lambda}}(0) E_{+}^{i} \bm{G}_{0}$,
$\tilde{\bm{\mathcal{K}}}[\tilde{\bm{\varepsilon}}(t)] = \bar{\bm{Q}}_{1} E_{+}^{i \top} \bar{\bm{\mathcal{T}}}^{-1}[\tilde{\bm{\varepsilon}}(t)](1) - \int_{0}^{1} E_{-}^{i \top} \bar{\bm{K}}(1, \zeta) \bar{\bm{\mathcal{T}}}^{-1}[\tilde{\bm{\varepsilon}}(t)](\zeta) \d\zeta$,
$\tilde{\bm{K}}_{w} = (\bm{Q}_{1} E_{+}^{i \top} - E_{-}^{i \top}) \bar{\bm{\Sigma}}(1)$,
$\tilde{\bar{\bm{C}}}_{w} = \bar{\bm{C}}_{w} + \bar{\bm{\mathcal{C}}}_{x}[\bar{\bm{\Sigma}}]$.

\textbf{System~(\ref{eq:extendedSystem.3}): }
$\underline{S} = I_{N} \otimes \tilde{S}$,
$\underline{\bar{\Lambda}} = \diag(\bar{\bm{\Lambda}})$, 
$\underline{\bar{A}}_{0} = \diag(\mytilde{0.6}{3pt}{\mybar{0.5}{3.3pt}{\bm{A}}}_{0})$,
$\underline{\bar{Q}}_{0} = \diag(\bar{\bm{Q}}_{0})$,
$\underline{E}_{-} = \diag(\underline{E}_{-}^{1}, \ldots, \underline{E}_{-}^{g})$
$\underline{E}_{+} = \diag(\underline{E}_{+}^{1}, \ldots, \underline{E}_{+}^{g})$
$\underline{K}_{\bar{v}} = \diag(I_{N^{1}} \otimes K_{\bar{v}}^{1}, \ldots, I_{N^{g}} \otimes K_{\bar{v}}^{g})$,
$\underline{\mathcal{K}}[\tilde{\varepsilon}(t)] = \diag(\tilde{\bm{\mathcal{K}}}[\tilde{\bm{\varepsilon}}(t)]) + \col(\bm{\mathcal{K}}[0, \underline{\bar{\mathcal{T}}}^{-1}[\tilde{\varepsilon}(t)], 0])$,
$\underline{\tilde{K}}_{w} = \diag(\tilde{\bm{K}}_{w}) + \col(\bm{\mathcal{K}}[0, \underline{\bar{\Sigma}}, 1])$,
$\underline{\bar{\mathcal{T}}} = \diag(\bar{\bm{\mathcal{T}}})$,
$\underline{\bar{\Sigma}} = \diag(\bar{\bm{\Sigma}})$,
$\underline{\tilde{\bar{F}}}_{w} = \diag(\tilde{\bar{\bm{F}}}_{w})$,
$\underline{\bar{B}}_{w} = \diag(\bar{\bm{B}}_{w})$,
$\underline{\bar{\mathcal{C}}}_{x} = \diag(\bar{\bm{\mathcal{C}}}_{x} \bar{\bm{\mathcal{T}}}^{-1})$, 
$\underline{\tilde{\bar{C}}}_{w} = \diag(\tilde{\bar{\bm{C}}}_{w})$,
$\underline{G} = \col(\tilde{\bm{G}} \tilde{\bm{P}}_{d})$, 
$\underline{G}_{0} = \col(\bm{G}_{0} \tilde{\bm{P}}_{d})$,
$\underline{G}_{1} = \col(\bm{G}_{1} \tilde{\bm{P}}_{d})$,
$\underline{G}_{w} = \col(\bm{G}_{w} \tilde{\bm{P}}_{d})$,
$\underline{G}_{y} = \col(\tilde{\bm{G}}_{y} \tilde{\bm{P}}_{d} - \tilde{P}_{r})$.

\subsubsection{Proof of Lemma \ref{lem:extendedRegulatorEquations}}\label{sec:appendix.extendedRegulatorEquations}
The following lemma is anticipated and the proof of Lemma \ref{lem:extendedRegulatorEquations} is stated afterwards.

\begin{lemma}[Transmission Zeros]\label{lem:robustOutputRegulation.rankNumerator}
	The numerator $\underline{N}(s)  \in \mathbb{C}^{N \bar{n}_{-} \times \underline{n}_{-}}$, $\underline{n}_{-} = \sum_{i = 1}^{g} N^{i} n_{-}^{i}$, of the transfer matrix $\underline{F}(s) = \underline{N}(s) \underline{D}^{-1}\!(s)$ from $u = \col(\bm{u}) \in \mathbb{R}^{\underline{n}_{-}}$ to $y = \col(\bm{y}) \in \mathbb{R}^{N \bar{n}_{-}}$ of the uncertain agents \eqref{eq:agent} is
	\begin{align}\label{eq:aggregated.numerator}
		\underline{N}(s) \!=\!
			\underline{\bar{\mathcal{C}}}_{x}[\underline{M}(s)] \!\det(s I \!-\! \underline{\tilde{\bar{F}}}_{w})
			\!+\! \underline{\tilde{\bar{C}}}_{w} \text{adj}(s I \!-\! \underline{\tilde{\bar{F}}}_{w}) \underline{\bar{B}}_{w}\!
	\end{align}
	with
	$\underline{M}(z, s) = \underline{\Phi}(z, 0, s) (\underline{E}_{-} + \underline{E}_{+} \underline{\bar{Q}}_{0}) - \int_{0}^{z} \!\underline{\Phi}(z, \zeta, s) \linebreak \cdot \! \underline{\Lambda}^{-1}\!(\zeta) \underline{\bar{A}}_{0}(\zeta) \d\zeta$
	and
	$\underline{\Phi}(z, \zeta, s) = \e^{s \int_{\zeta}^{z} \underline{\Lambda}\!^{-1}\!(\eta) \d\eta }$.
	Assume that the system \eqref{eq:extendedSystem.3.internalModel}--\eqref{eq:extendedSystem.3.output} is asymptotically stable. Then, for all $\mu \in \sigma(S)$ the numerator \eqref{eq:aggregated.numerator} satisfies $\text{rank}\,\underline{N}(\mu) = N \bar{n}_{-}$.
\end{lemma}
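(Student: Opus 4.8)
The plan is to prove the two assertions separately: first the explicit factorization \eqref{eq:aggregated.numerator}, and then the rank condition as a necessary consequence of the assumed stability. For the numerator I would simply transcribe the frequency-domain computation of Appendix~\ref{sec:appendix.controllability} at the aggregated level of \eqref{eq:agent.transformed}. Substituting the modal ansatz $\tilde{\varepsilon}(z,t)=\check{\tilde{\varepsilon}}(z)\e^{st}$, $w(t)=\check{w}\e^{st}$, $y(t)=\check{y}\e^{st}$ for $u(t)=\check{u}\e^{st}$ and integrating the transport equation with the fundamental matrix $\underline{\Phi}$, the boundary condition \eqref{eq:extendedSystem.3.bc0} yields $\check{\tilde{\varepsilon}}(z)=\underline{M}(z,s)\check{\tilde{\varepsilon}}_{-}(0)$ exactly as in the single-agent case. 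Inserting this together with $\check{w}=(sI-\underline{\tilde{\bar{F}}}_{w})^{-1}\underline{\bar{B}}_{w}\check{\tilde{\varepsilon}}_{-}(0)$ into the output and factoring out $(\det(sI-\underline{\tilde{\bar{F}}}_{w}))^{-1}$ reproduces \eqref{eq:aggregated.numerator}, while the actuated boundary condition defines a denominator $\underline{D}(s)\in\mathbb{C}^{\underline{n}_{-}\times\underline{n}_{-}}$ (absorbing the internal feedback terms) through $\underline{D}(s)\check{\tilde{\varepsilon}}_{-}(0)=\check{u}$. Since \eqref{eq:transformation.extendedRegulatorEquations} is boundedly invertible, this is the transfer matrix from $u$ to $y$ of \eqref{eq:agent}.

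For the rank statement I would argue by contradiction, exhibiting a closed-loop mode on the imaginary axis whenever the numerator degenerates. Suppose $\rank\underline{N}(\mu)<N\bar{n}_{-}$ for some $\mu\in\sigma(S)$, so that $\rho^{\top}\underline{N}(\mu)=0$ for some $\rho\neq0$. Let $r$ be a right eigenvector of $S$ for $\mu$ and let $\tilde{K}$ denote the linear map $p\mapsto\underline{K}_{\bar{v}}(p\otimes r)$; this encodes that an eigenvector $\check{\bar{v}}$ of $\underline{S}=I_{N\bar{n}_{-}}\otimes S$ at $\mu$ must have the form $p\otimes r$. I then consider the square block matrix
\[
	R(\mu)=\begin{bmatrix}\underline{N}(\mu)&0\\\underline{D}(\mu)&-\tilde{K}\end{bmatrix}.
\]
Because $[\rho^{\top}\ 0^{\top}]R(\mu)=[\rho^{\top}\underline{N}(\mu)\ \ 0]=0$, the matrix $R(\mu)$ is singular and possesses a nonzero kernel element $(\xi,p)$, i.e.\ $\underline{N}(\mu)\xi=0$ and $\underline{D}(\mu)\xi=\underline{K}_{\bar{v}}(p\otimes r)$.

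From this kernel element I would reconstruct a nonzero $\e^{\mu t}$-solution of the homogeneous subsystem \eqref{eq:extendedSystem.3.internalModel}--\eqref{eq:extendedSystem.3.output} (i.e.\ for $v=0$) by setting $\check{\bar{v}}=p\otimes r$, $\check{\tilde{\varepsilon}}(z)=\underline{M}(z,\mu)\xi$ and $\check{w}=(\mu I-\underline{\tilde{\bar{F}}}_{w})^{-1}\underline{\bar{B}}_{w}\xi$, the inverse existing since $\underline{\tilde{\bar{F}}}_{w}$ is Hurwitz and $\mu\in\mathrm{j}\mathbb{R}$. The first block row $\underline{N}(\mu)\xi=0$ yields $\check{e}_{y}=0$ in view of \eqref{eq:aggregated.numerator}, so \eqref{eq:extendedSystem.3.internalModel} collapses to $(\mu I-\underline{S})\check{\bar{v}}=0$, which holds because $r$ is an eigenvector; the second block row is precisely the actuated boundary condition \eqref{eq:extendedSystem.3.bc1} rewritten through $\check{\tilde{\varepsilon}}=\underline{M}(\cdot,\mu)\xi$. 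This solution is nonzero in either case: if $p\neq0$ then $\check{\bar{v}}\neq0$, and if $p=0$ then $\xi\neq0$, whence $\underline{E}_{-}^{\top}\underline{M}(0,\mu)=I$ forces $\check{\tilde{\varepsilon}}_{-}(0)=\xi\neq0$. As $\operatorname{Re}\mu=0$, such a mode contradicts the assumed asymptotic stability, and therefore $\rank\underline{N}(\mu)=N\bar{n}_{-}$.

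I expect the bulk of the labor to sit in the first part, which is the aggregated repetition of Appendix~\ref{sec:appendix.controllability}, whereas the second part becomes short once routed through the singularity of $R(\mu)$. The recurring points to check there are that $H\otimes\tilde{B}_{y}$ has full column rank, so that the first block row genuinely forces $\check{e}_{y}=0$ rather than only $(H\otimes\tilde{B}_{y})\check{e}_{y}=0$ (this uses the nonsingularity of $H$ from Lemma~\ref{lem:H} and of $\tilde{B}_{y}=I_{\bar{n}_{-}}\otimes b_{y}$), together with the normalization $\underline{E}_{-}^{\top}\underline{M}(0,\mu)=I$ inherited from $\underline{M}(0,s)=\underline{E}_{-}+\underline{E}_{+}\underline{\bar{Q}}_{0}$. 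A convenient feature of this route is that it never inverts $\underline{D}(\mu)$, so a possible open-loop pole of the agents at $\mu$ needs no separate treatment.
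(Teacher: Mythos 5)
Your first half (the numerator formula) is fine and coincides with the paper's own derivation, which likewise repeats the modal computation of Appendix~\ref{sec:appendix.controllability} for the aggregated uncertain system \eqref{eq:agent.transformed}. The genuine gap is in your matrix $R(\mu)$: its second block row uses the \emph{open-loop} denominator $\underline{D}(\mu)$, defined by $\underline{D}(s)\check{\tilde{\varepsilon}}_{-}(0)=\check{u}$ for the plant with external input $u$, yet you then claim that $\underline{D}(\mu)\xi=\underline{K}_{\bar{v}}(p\otimes r)$ ``is precisely'' the closed-loop boundary condition \eqref{eq:extendedSystem.3.bc1}. It is not. In \eqref{eq:extendedSystem.3.bc1} the operators $\underline{\mathcal{K}}$ and $\underline{\tilde{K}}_{w}$ (see Appendix \ref{sec:appendix.parameters}) contain, besides the transformation-induced terms $\diag(\tilde{\bm{\mathcal{K}}})$ and $\diag(\tilde{\bm{K}}_{w})$ that your $\underline{D}$ absorbs, also the controller's local and cooperative state feedback expressed in the transformed coordinates, i.e.\ $\col(\bm{\mathcal{K}}[0,\underline{\bar{\mathcal{T}}}^{-1}[\,\cdot\,],0])$ and $\col(\bm{\mathcal{K}}[0,\underline{\bar{\Sigma}},1])$. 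These terms do not cancel, because the gains are designed from the nominal parameters while the transformations \eqref{eq:transformation.extendedRegulatorEquations} solve the \emph{uncertain} decoupling and kernel equations. Consequently a kernel element of your $R(\mu)$ produces an $\e^{\mu t}$-mode of the wrong system — the plant under the pure internal-model feedback $u=\underline{K}_{\bar{v}}\bar{v}$ with all state feedback deleted — and its existence contradicts nothing, since only \eqref{eq:extendedSystem.3.internalModel}--\eqref{eq:extendedSystem.3.output} is assumed stable. The repair is to put the closed-loop boundary matrix $\underline{P}(\mu)$ of \eqref{eq:laplace.P.K} in place of $\underline{D}(\mu)$; your left-annihilation $[\rho^{\top}~0^{\top}]R(\mu)=0$ involves only the first block row, so the singularity argument and the reconstruction then go through unchanged.

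A second, smaller slip: you justify the existence of $(\mu I-\underline{\tilde{\bar{F}}}_{w})^{-1}$ by ``$\underline{\tilde{\bar{F}}}_{w}$ is Hurwitz''. Hurwitzness is a property of the nominal $\tilde{F}_{w}^{i}=F_{w}^{i}-B_{w}^{i}K_{w}^{i}$; the matrix appearing here is the uncertain $\bar{\bm{F}}_{w}-\bar{\bm{B}}_{w}K_{w}^{i}$, which need not be Hurwitz. You need this nonsingularity twice (to define $\check{w}$ and to pass from $\underline{N}(\mu)\xi=0$ to $\check{e}_{y}=0$ via $f_{w}(\mu)\check{e}_{y}=\underline{N}(\mu)\xi$), and, as in the paper's proof of Lemma \ref{lem:extendedRegulatorEquations}, it must be extracted from the assumed closed-loop stability rather than from the nominal design. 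With these two repairs your proof is correct and amounts to the paper's argument run in contrapositive: the paper shows that stability forces the square matrix \eqref{eq:modalSolution.initialValues} to be nonsingular and then obtains $\rank\,\underline{N}(\mu)=N\bar{n}_{-}$ via Sylvester's rank inequality and $\rank(H\otimes\tilde{B}_{y})=N\bar{n}_{-}$, whereas you posit rank deficiency and construct the offending mode directly. As a bonus, your route needs no rank bookkeeping for $H\otimes\tilde{B}_{y}$ at all — your own closing concern about its column rank is moot, since $\check{e}_{y}=0$ follows from $\underline{N}(\mu)\xi=0$ and $f_{w}(\mu)\neq0$, not from the internal-model equation.
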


\begin{proof}
	As the transfer behavior from $u(t)$ to $y(t)$ is investigated, $v(t)=0$ is used in the following.
	Since the transformations \eqref{eq:transformation.extendedRegulatorEquations} are boundedly invertible, the transfer behavior of the followers \eqref{eq:agent} is equivalent to the transfer behavior of the system \eqref{eq:agent.transformed}.
	The aggregation of \eqref{eq:agent.transformed} is \eqref{eq:extendedSystem.3.pde}, \eqref{eq:extendedSystem.3.bc0}, \eqref{eq:extendedSystem.3.ode}, \eqref{eq:extendedSystem.3.output} and
	$\tilde{\varepsilon}_{-}(1, t) = u(t) + \underline{\tilde{\mathcal{K}}}[\tilde{\varepsilon}(t)] + \underline{\tilde{K}}_{w} w(t)$
	with $\underline{\tilde{\mathcal{K}}} = \diag(\tilde{\bm{\mathcal{K}}})$, $\underline{\tilde{K}}_{w} = \diag(\tilde{\bm{K}}_{w})$.
	Then, considering the solution $\tilde{\varepsilon}(z, t) = \check{\tilde{\varepsilon}}(z) \e^{s t}$, $w(t) = \check{w} \e^{s t}$ for $u(t) = \check{u} \e^{s t}$, $t \geq 0$, $s \in \mathbb{C}$, yielding $y(t) = \check{y} \e^{s t}$ with the initial values $\check{\tilde{\varepsilon}}(z)$, $z \in [0, 1]$, $\check{w}$ (see \cite{Zw04}), and repeating the calculations from Appendix \ref{sec:appendix.controllability}, shows that $\underline{N}(s)$ in \eqref{eq:aggregated.numerator} is the numerator of the transfer matrix in question.
	Moreover, during the latter calculations also
	\begin{align}\label{eq:aggregated.laplace}
		\check{w} = (sI - \underline{\tilde{\bar{F}}}_{w})^{-1} \underline{\bar{B}}_{w} \check{\tilde{\varepsilon}}_{-}(0), &&\check{\tilde{\varepsilon}}(z) = \underline{M}(z, s) \check{\tilde{\varepsilon}}_{-}(0)
	\end{align}
	result with $\underline{M}(z, s)$ given in the lemma.
	
	Next, the networked controlled MAS \eqref{eq:extendedSystem.3} is examined in order to utilize its stability.
	Note that the equations \eqref{eq:aggregated.laplace} hold independent of the boundary condition at $z=1$, which allows to use them for the networked controlled MAS \eqref{eq:extendedSystem.3}.
	Hence, inserting \eqref{eq:aggregated.laplace} in \eqref{eq:extendedSystem.3.bc1} with $\bar{v}(t) = \check{\bar{v}} \e^{s t}$ and the initial value $\check{\bar{v}}$ gives
	$\underline{K}_{\bar{v}} \check{\bar{v}} = \underline{P}(s) \check{\tilde{\varepsilon}}_{-}(0)$
	with
	\begin{align}\label{eq:laplace.P.K}
		\underline{P}(s) = \underline{E}_{-}^{\top} \underline{M}(1, s) \!-\! \underline{\mathcal{K}}[\underline{M}(s)] \!-\! \underline{\tilde{K}}_{w} (s I \!-\! \underline{\tilde{\bar{F}}}_{w})^{-1} \underline{\bar{B}}_{w}.
	\end{align}
	Inserting \eqref{eq:extendedSystem.3.output} into \eqref{eq:extendedSystem.3.internalModel} and using \eqref{eq:aggregated.laplace} yields 
	$f_{w}(s) (s I - \underline{S}) \check{\bar{v}} = (H \otimes \tilde{B}_{y}) \underline{N}(s) \check{\tilde{\varepsilon}}_{-}(0)$ with $f_{w}(s) = \det(sI - \underline{\tilde{\bar{F}}}_{w})$.
	The latter and $\underline{K}_{\bar{v}} \check{\bar{v}} = \underline{P}(s) \check{\tilde{\varepsilon}}_{-}(0)$ are combined into
	\begin{align}\label{eq:modalSolution.initialValues}
		\!\begin{bmatrix}
			\underline{P}(s) & -\underline{K}_{\bar{v}} \\
			(H \Otimes \tilde{B}_{y}) \underline{N}(s) \!& \!-(s I \!-\! \underline{S}) f_{w}(s)
		\end{bmatrix}\hspace*{-1mm}
		\begin{bmatrix}
			\check{\tilde{\varepsilon}}_{-}(0) \\
			\check{\bar{v}}
		\end{bmatrix}
		\!= 0 .\hspace*{-1mm}
	\end{align}
	Due to the assumed asymptotic stability of the networked controlled MAS, the initial values $\check{\bar{v}} = 0$, $\check{\tilde{\varepsilon}}_{-}(0) = 0$ must be the only admissible ICs for the modes $s = \mu \in \sigma(\underline{S}) \subset \text{j} \mathbb{R}$.
	Otherwise, non-decaying solutions $\bar{v}(t) = \e^{\mu t} \check{\bar{v}}$, $\tilde{\varepsilon}_{-}(0, t) = \e^{\mu t} \check{\tilde{\varepsilon}}_{-}(0)$ would result contradicting the assumed asymptotic stability.
	This means that \eqref{eq:modalSolution.initialValues} with $s=\mu$ has only the trivial solution $\check{\tilde{\varepsilon}}_{-}(0) = 0$, $\check{\bar{v}} = 0$.
	Consequently, the matrix of the dimension $(\underline{n}_{-} + N n_{\bar{v}}) \times (\underline{n}_{-} + N n_{\bar{v}})$ in \eqref{eq:modalSolution.initialValues} with $\underline{n}_{-} = \dim \check{\tilde{\varepsilon}}_{-}(0)$ and $N n_{\bar{v}} = \dim \check{\bar{v}}$ is nonsingular.
	Consider the matrix $\diag(I_{\underline{n}_{-}}, I_{N \bar{n}_{-}} \otimes \rho)$ which contains the eigenvector $\rho$ of $S$ w.r.t. the eigenvalue $\mu$ and has the rank $\underline{n}_{-} + N \bar{n}_{-}$.
	Then, Sylvester's rank inequalities (see \cite[Cor. 2.5.10.]{Bern05}) and $(\underline{S} - \mu I) (I_{N \bar{n}_{-}} \otimes \rho ) = 0$ lead to
	\begin{align}\label{eq:rank.1}
		&\rank(\!\begin{bmatrix}
			\underline{P}(\mu) & -\underline{K}_{\bar{v}} \\
			(H \Otimes \tilde{B}_{y}) \underline{N}(\mu) \!&\! f_{w}(\mu) (\underline{S}\!-\!\mu I)
		\end{bmatrix}
		\hspace*{-1mm}
		\begin{bmatrix}
			I_{\underline{n}_{-}} & 0 \\ 0 & I_{N \bar{n}_{-}} \Otimes \rho
		\end{bmatrix}\!)
		\!=\nonumber\\&
		\rank\! \begin{bmatrix}
			\underline{P}(\mu)\! & \!-\underline{K}_{\bar{v}}(I_{N \bar{n}_{-\!}} \Otimes \rho) \\
			(H \otimes \tilde{B}_{y})\! \underline{N}(\mu) & 0
		\end{bmatrix}
		%
		%
		\!=\! \underline{n}_{-\!} \!+\! N \bar{n}_{-}.\hspace*{-1mm}
	\end{align}
	By utilizing $\rank(H \otimes \tilde{B}_{y}) = \rank(H) \rank(\tilde{B}_{y})$ (c.f. \cite[Fact 7.4.24]{Bern05}), $\rank H =  N$ (see Lemma \ref{lem:H}) and $\rank \tilde{B}_{y} = \rank (I_{\bar{n}_{-}} \otimes b_{y}) = \bar{n}_{-}$ ($b_{y} \neq 0$ because $(S, b_{y})$ is controllable), one obtains $\rank(H \otimes \tilde{B}_{y}) = N \bar{n}_{-}$.
	Thus, $\rank((H \otimes \tilde{B}_{y}) \underline{N}(\mu)) = \rank \underline{N}(\mu)$ results in view of $\underline{N}(\mu) \in \mathbb{C}^{N \bar{n}_{-} \times \underline{n}_{-}}$, $N \bar{n}_{-} \leq \underline{n}_{-}$. With this and \eqref{eq:rank.1} follows that
	\begin{align}\label{eq:rank.2}\rank \begin{bmatrix}
			\underline{P}(\mu) & -\underline{K}_{\bar{v}}(I_{N \bar{n}_{-}} \otimes \rho) \\
			\underline{N}(\mu) & 0
		\end{bmatrix}
		= \underline{n}_{-} + N \bar{n}_{-}
	\end{align}
	has to hold for this matrix of the dimension $(\underline{n}_{-} + N \bar{n}_{-}) \times (\underline{n}_{-} + N \bar{n}_{-})$.
	The latter can only be fulfilled if $\rank \underline{N}(\mu) = N \bar{n}_{-}$, which completes the proof.
\end{proof}

{\textit{Proof of Lemma \ref{lem:extendedRegulatorEquations}: }
	Consider the generalized right eigenvectors $\rho_{k}^{l}$ of $S$ w.r.t. the eigenvalues $\mu_{k}$, $k=1, \ldots, \bar{k}$, with the dimension $l=1, \ldots, \bar{l}_{k}$ of the corresponding Jordan blocks and 
	$S \rho_{k}^{l} = \mu_{k} \rho_{k}^{l} + \rho_{k}^{l-1}$,
	$\rho_{k}^{0} = 0$.
	Then, defining $\gamma_{\bar{v}, k}^{l} = \Upsilon\!_{\bar{v}} \rho_{k}^{l}$, $\gamma_{k}^{l}(z) = \Upsilon\!_{\varepsilon}(z) \rho_{k}^{l}$, $\gamma_{w, k}^{l} = \Upsilon\!_{w} \rho_{k}^{l}$, multiplication of \eqref{eq:extendedRegulatorEquations} with $\rho_{k}^{l}$ from the right side and rearranging some terms leads to
	\begin{subequations}\label{eq:extendedRegulatorEquations.2}
		\begin{align}
			(\underline{S} - \mu_{k} I )\gamma_{\bar{v}, k}^{l} &= \gamma_{\bar{v}, k}^{l-1} \label{eq:extendedRegulatorEquations.2.v}\\
			(\mu_{k} I \!-\! \underline{\tilde{\bar{F}}}_{w}) \gamma_{w, k}^{l} &= \underline{\bar{B}}_{w} \underline{E}_{-}^{\top} \gamma_{k}^{l}(0) \!+\! \underline{G}_{w} \rho_{k}^{l} \!-\! \gamma_{w, k}^{l-1}\!\label{eq:extendedRegulatorEquations.2.w}\\
			\d_{z} \gamma_{k}^{l}(z) &= (\underline{\bar{\Lambda}}(z))^{-1} (\mu_{k} \gamma_{k}^{l}(z) + \gamma_{k}^{l-1}(z)
			\nonumber\\*&~
			-\underline{\bar{A}}_{0}(z) \underline{E}_{-}^{\top} \gamma_{k}^{l}(0) 
			- \underline{G}(z) \rho_{k}^{l} ) \label{eq:extendedRegulatorEquations.2.pde}\\
			(\underline{E}_{+}^{\top} \!-\! \underline{\bar{Q}}_{0} \underline{E}_{-}^{\top}) \gamma_{k}^{l}(0) &= \underline{G}_{0} \rho_{k}^{l} \label{eq:extendedRegulatorEquations.2.bc0}\\
			\underline{E}_{-}^{\top} \gamma_{k}^{l}(1) \!-\! \underline{\mathcal{K}}[\gamma_{k}^{l}] &= \underline{K}_{\bar{v}} \gamma_{\bar{v}, k}^{l} + \underline{\tilde{K}}_{w} \gamma_{w, k}^{l} + \underline{G}_{1} \rho_{k}^{l} \label{eq:extendedRegulatorEquations.2.bc1}\\
			\underline{\bar{\mathcal{C}}}_{x}[\gamma_{k}^{l}] &= - \underline{\tilde{\bar{C}}}_{w} \gamma_{w, k}^{l} - \underline{G}_{y} \rho_{k}^{l} \label{eq:extendedRegulatorEquations.2.output}.
		\end{align}
	\end{subequations}
	Since the eigenvalue $\mu_{k}$ of $S$ is also an eigenvalue of $\underline{S} = I_{N \bar{n}_{-}}\otimes S$, it is evident from \eqref{eq:extendedRegulatorEquations.2.v} that $\gamma_{\bar{v}, k}^{l}$ must be a generalized eigenvector of $\underline{S}$ w.r.t. the eigenvalue $\mu_k$.
	However, $\Upsilon_{\bar{v}}$ is determined by a number of $n_{v} = |\sigma(S)|$ vectors $\gamma_{\bar{v}, k}^{l}$, while the matrix $\underline{S}$ is of dimension $(n_{v} \bar{n}_{-} N) \times (n_{v} \bar{n}_{-} N)$ and its generalized eigenvectors span $\mathbb{C}^{n_{v} \bar{n}_{-} N}$.
	Hence, these remaining degrees of freedom can be used to find $\gamma_{\bar{v}, k}^{l}$ such that \eqref{eq:extendedRegulatorEquations.2.bc1} is satisfied as well. For this, the parametrization
	$\gamma_{\bar{v}, k}^{l} = \beta_{k}^{l} \otimes \rho_{k}^{1} + \Delta_{k}^{l}$,
	$\Delta_{k}^{l} = \sum_{m=1}^{l-1} \beta_{k}^{m} \otimes  \rho_{k}^{l-m+1}$
	is established, which solves \eqref{eq:extendedRegulatorEquations.2.v} without any constraints on the parameters $\beta_{k}^{l} \in \mathbb{R}^{N \bar{n}_{-}}$, $l=1, \ldots, \bar{l}_{k}$.
	The latter expression for $\gamma_{\bar{v}, k}^{l}$ is obtained by first noting that $\gamma_{\bar{v}, k}^{1} = \beta_{k}^{1} \otimes \rho_{k}^{1}$ is an eigenvector of $\underline{S}$.
	From this starting point, the generalized eigenvectors $\gamma_{\bar{v}, k}^{l}$ for $l = 2, \ldots, \bar{l}_{k}$ can be determined iteratively. For this, replace $\gamma_{\bar{v}, k}^{l-1}$ in \eqref{eq:extendedRegulatorEquations.2.v} with $\gamma_{\bar{v}, k}^{1} = \beta_{k}^{1} \otimes \rho_{k}^{1}$ for $l=2$, or otherwise with the result of the previous iteration, which depends only on $\beta_{k}^{m}$, $\rho_{k}^{m}$, $m= 1, \ldots, l-1$. Next, all $\rho_{k}^{m}$ are substituted via $\rho_{k}^{m} = (S -\mu_{k} I) \rho_{k}^{m+1}$ and $0 = \beta_{k}^{l} \otimes (S - \mu_{k} I ) \rho_{k}^{1}$ is added.
	After utilizing $\beta_{k}^{m} \otimes (S - \mu_{k} I ) = (\underline{S} - \mu_{k} I ) ( \beta_{k}^{m} \otimes I_{n_{v}})$, the common factor $\underline{S} - \mu_{k} I$ can be taken out and the result $\gamma_{\bar{v}, k}^{l} = \beta_{k}^{l} \otimes \rho_{k}^{1} + \Delta_{k}^{l}$ is obtained.
	Next, from \eqref{eq:extendedRegulatorEquations.2.pde}, \eqref{eq:extendedRegulatorEquations.2.bc0} follows similarly to \cite[App. 2]{DeuGab20} that
	\begin{align}\label{eq:gamma}
		\gamma_{k}^{l}(z) &= \underline{M}(z, \mu_{k}) \underline{E}_{-}^{\top} \gamma_{k}^{l}(0) + \underline{c}_{k}^{l}(z)
	\end{align}
	with
	$\underline{M}(z, \mu_{k}), \underline{\Phi}(z, \zeta, \mu_{k})$ given in Lemma \ref{lem:robustOutputRegulation.rankNumerator}
	and
	$\underline{c}_{k}^{l}(z) = \underline{\Phi}(z, 0, \mu_{k}) \underline{E}_{+} \underline{G}_{0} \rho_{k}^{l} + \int_{0}^{z} \underline{\Phi}(z, \zeta, \mu_{k}) \underline{\Lambda}^{-1}(\zeta) (\gamma_{k}^{l-1}(\zeta) - \underline{G}(\zeta) \rho_{k}^{l}) \d\zeta$.
	Furthermore, \eqref{eq:extendedRegulatorEquations.2.w} can be solved for $\gamma_{w, k}^{l}$ because $\mu_{k} I - \underline{\tilde{\bar{F}}}_{w}$ is nonsingular since otherwise \eqref{eq:aggregated.laplace} with $s=\mu$ would permit a non-decaying solution $w(t) = \check{w} \e^{\mu t}$ with $(\mu I - \underline{\tilde{\bar{F}}}_{w})\check{w} = 0$, $\check{w} \neq 0$, which contradicts the assumed asymptotic stability.
	Note that in view of $\gamma_{\bar{v}, k}^{l} = \beta_{k}^{l} \otimes \rho_{k}^{1} + \Delta_{k}^{l}$ and $\beta_{k}^{l} \otimes \rho_{k}^{1} = (I_{N \bar{n}_{-}} \otimes \rho_{k}^{1}) \beta_{k}^{l}$, the term $\underline{K}_{\bar{v}} \gamma_{\bar{v}, k}^{l}$ in \eqref{eq:extendedRegulatorEquations.2.bc1} can be written as
	$\underline{K}_{\bar{v}} \gamma_{\bar{v}, k}^{l} = \underline{K}_{\bar{v}} (I_{N \bar{n}_{-}} \otimes \rho_{k}^{1}) \beta_{k}^{l} + \underline{K}_{\bar{v}} \Delta_{k}^{l}$.
	Thus, by stacking \eqref{eq:extendedRegulatorEquations.2.bc1} and \eqref{eq:extendedRegulatorEquations.2.output}, then substituting $\underline{K}_{\bar{v}} \gamma_{\bar{v}, k}^{l}$ as well as $\gamma_{w, k}^{l}$, $\gamma_{k}^{l}(z)$  with \eqref{eq:extendedRegulatorEquations.2.w}, \eqref{eq:gamma} and some simple calculations, one obtains
	\begin{align}\label{eq:gamma.linearMatrixEquation}
		\begin{bmatrix*}
			\underline{P}(\mu_{k}) & - \underline{K}_{\bar{v}} (I_{N \bar{n}_{-}} \Otimes \rho_{k}^{1}) \\
			\underline{N}(\mu_{k}) & 0
		\end{bmatrix*}
		\begin{bmatrix*} \underline{E}_{-}^{\top} \gamma_{k}^{l}(0) \\ \beta_{k}^{l} \end{bmatrix*}
		= r_{k}^{l}
	\end{align}
	with
	$\underline{P}(\mu_{k})$ and $\underline{N}(\mu_{k})$ given in \eqref{eq:laplace.P.K}, \eqref{eq:aggregated.numerator} as well as
	$r_{k}^{l} = \col(r_{1,k}^{l}, r_{2,k}^{l})$,
	$r_{1,k}^{l} = \underline{K}_{\bar{v}} \Delta_{k}^{l} + \underline{\tilde{K}}_{w} (\mu_{k} I \!-\! \underline{\tilde{\bar{F}}}_{w})^{-1} (\underline{G}_{w} \rho_{k}^{l} \!-\! \gamma_{w, k}^{l-1}) + \underline{G}_{1} \rho_{k}^{l} - \underline{E}_{-}^{\top} \underline{c}_{k}^{l}(1) + \underline{\mathcal{K}}[\underline{c}_{k}^{l}]$,
	$r_{2,k}^{l} = - \det(\mu_{k} I \!-\! \underline{\tilde{\bar{F}}}_{w})(\underline{\tilde{\bar{C}}}_{w} (\mu_{k} I \!-\! \underline{\tilde{\bar{F}}}_{w})^{-1} (\underline{G}_{w} \rho_{k}^{l} \!+\! \gamma_{w, k}^{l-1}) + \underline{G}_{y} \rho_{k}^{l} + \underline{\bar{\mathcal{C}}}_{x}[\underline{c}_{k}^{l}])$.
	As the assumptions of Lemma \ref{lem:robustOutputRegulation.rankNumerator} are met by the assumptions of Lemma \ref{lem:extendedRegulatorEquations}, the matrix in \eqref{eq:gamma.linearMatrixEquation} is nonsingular (see \eqref{eq:rank.2}) and a unique solution $\underline{E}_{-}^{\top} \gamma_{k}^{l}(0)$, $\beta_{k}^{l}$ of \eqref{eq:gamma.linearMatrixEquation} can be calculated.
	Then, $\gamma_{w, k}^{l}$, $\gamma_{k}^{l}(z)$, $\gamma_{\bar{v}, k}^{l}$ follow by an evaluation of \eqref{eq:extendedRegulatorEquations.2.w}, \eqref{eq:gamma} and $\gamma_{\bar{v}, k}^{l} = \beta_{k}^{l} \otimes \rho_{k}^{1} + \Delta_{k}^{l}$.
	Together with the inverse of the transformation matrix in the corresponding Jordan canonical form resulting from $\rho_{k}^{l}$, they yield the solution $\Upsilon\!_{\bar{v}}, \Upsilon\!_{\varepsilon}, \Upsilon\!_{w}$ of the extended regulator equations \eqref{eq:extendedRegulatorEquations}.
	Note that $\underline{c}_{k}^{l}(z)$ in \eqref{eq:gamma} contains $\underline{G} = \col(\tilde{\bm{G}})$.
	Since $\tilde{\bm{G}}$ is piecewise continuous, the elements of $\gamma_{k}^{l}(z)$ and $\Upsilon\!_{\varepsilon}(z)$ are piecewise $C^{1}$.\proofEnd
}{\acceptancenotice\copyrightnotice}%

\bibliographystyle{IEEEtranS}
\bibliography{mybib}%
{\vspace*{-1pt}\acceptancenotice\copyrightnotice}%
\begin{IEEEbiography}[{\includegraphics[width=1in,height=1.25in,clip,keepaspectratio]{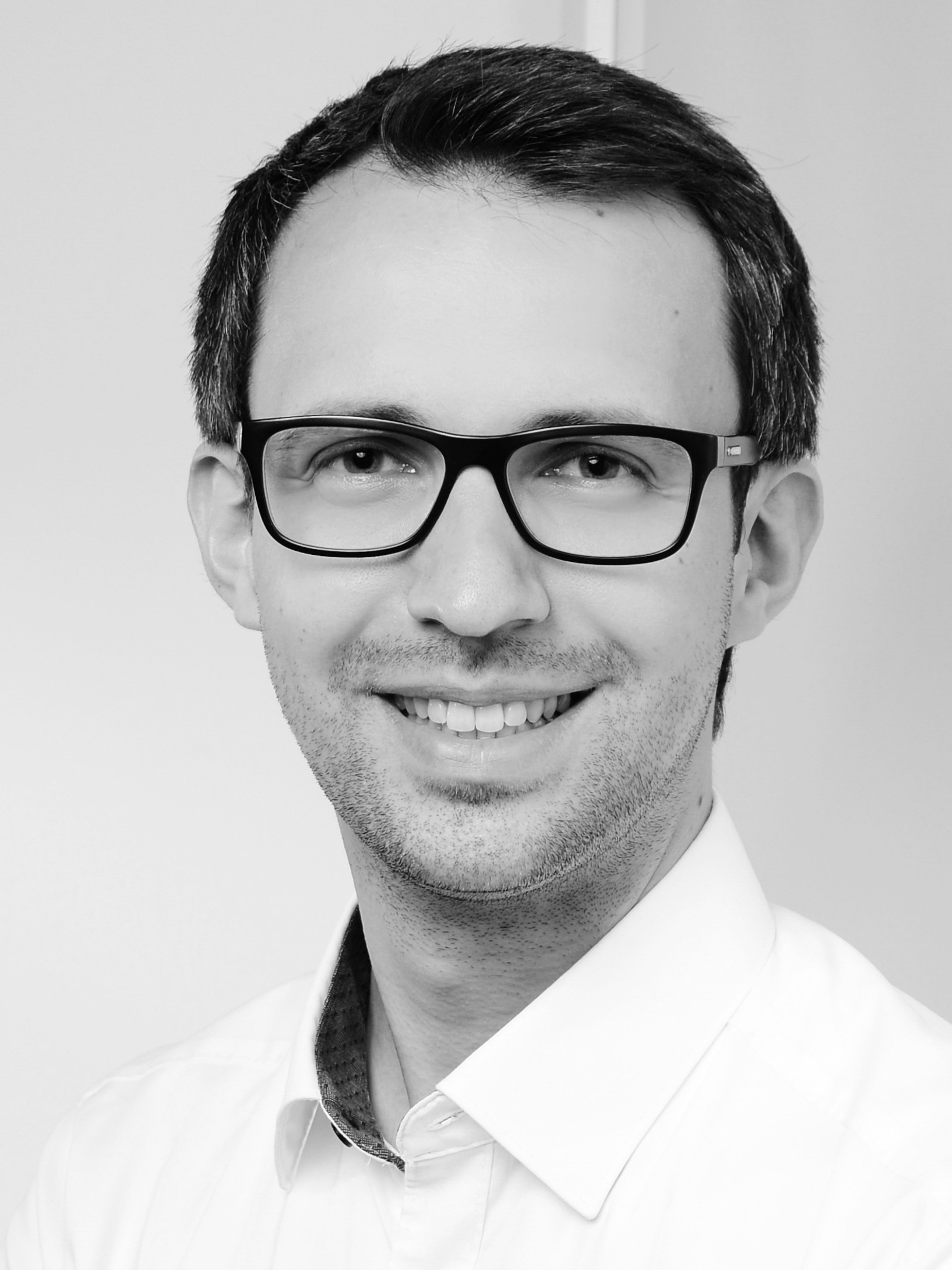}}]
	{Jakob Gabriel} received his M.Sc. degree in Mechatronics in 2016 from Friedrich-Alexander-Universität Erlangen-Nürnberg (FAU), Germany.
	Afterwards, he worked as a research assistant in the group of Prof. Deutscher at the Chair of Automatic Control at FAU and until December 2022 at the Institute of Measurement, Control and Microtechnology, Ulm University, Germany.
	His research interests are the control of hyperbolic systems using backstepping and cooperative output regulation.
	Since February 2023 he is with Primetals Technologies GmbH, Erlangen, Germany, as a developer for automation and control of metal rolling mills.
\end{IEEEbiography}
\begin{IEEEbiography}[{\includegraphics[width=1in,height=1.25in,clip,keepaspectratio]{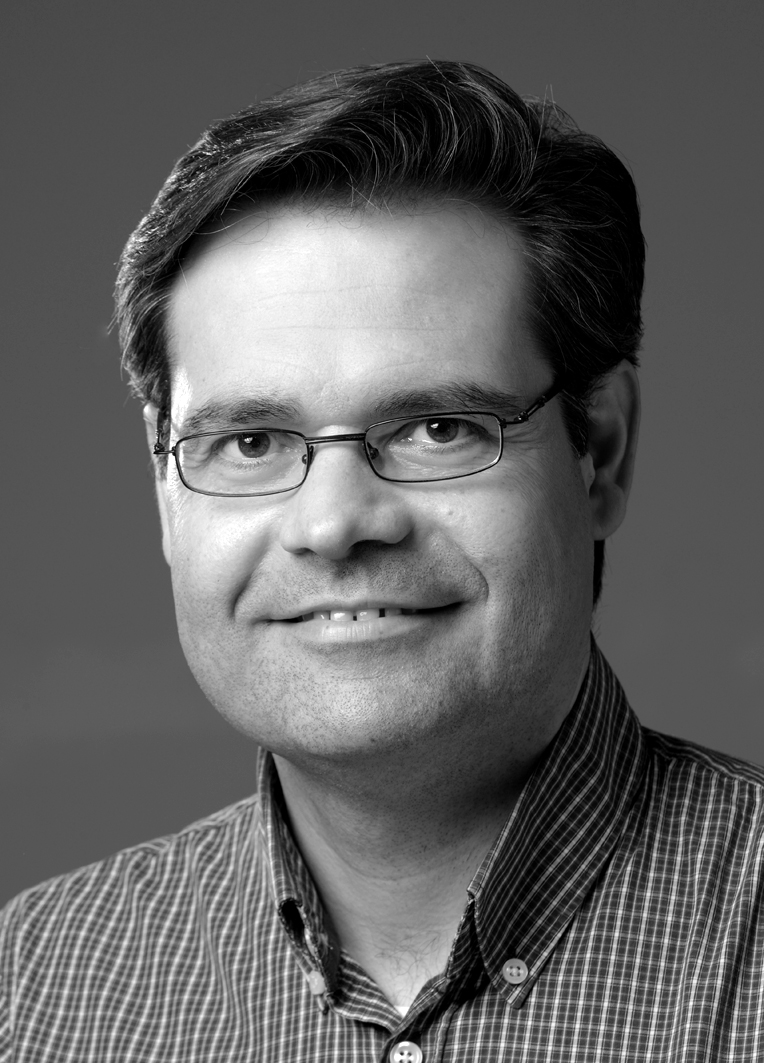}}]
	{Joachim Deutscher} (M '18)
	received the Dipl.-Ing. (FH) degree in Electrical Engineering from the University of Applied Sciences W\"urzburg-Schweinfurt-Aschaffenburg, Germany, in 1996, the Dipl.-Ing. Univ. degree in Electrical Engineering, the Dr.-Ing. and the Dr.-Ing. habil. degrees both in Automatic Control from the Friedrich-Alexander Universit\"at Erlangen-N\"urnberg (FAU), Germany, in 1999, 2003 and 2010, respectively.
	
	From 2003--2010 he was a Senior Researcher at the Chair of Automatic Control (FAU), in 2011 he was appointed Associate Professor and in 2017 Professor at the same university. Since April 2020 he is a Full Professor at the Institute of Measurement, Control and Microtechnology, Ulm University.
	His research interests include control of distributed-parameter and multi-agent systems as well as control theory for nonlinear lumped-parameter systems with applications in mechatronics and robotics. Dr. Deutscher has co-authored a book on state feedback control for linear lumped-parameter systems: Design of Observer-Based Compensators (Springer, 2009) and is author of the book: State Feedback Control of Distributed-Parameter Systems (in German) (Springer, 2012). At present he serves as Associate Editor for Automatica. 
\end{IEEEbiography}
\vfill%

\end{document}